\newtheorem{thm}{Theorem}[section]
\newtheorem{coro}[thm]{Corollary}
\newtheorem{propo}[thm]{Proposition}
\newtheorem{lem}[thm]{Lemma}
\theoremstyle{remark}
\newtheorem{obs}[thm]{Remark}
\theoremstyle{definition}
\newcommand{\imply}{\ensuremath{\rightarrow}}
\newcommand{\yy}{\ensuremath{\wedge}}
\newcommand{\RR}{\ensuremath{\mathbb{R}}}
\newcommand{\menos}{\symbol{92}}
\newcommand{\embeds}{\ensuremath{\hookrightarrow}}
\begin{document}

\title{Bessel potentials in Ahlfors regular metric spaces}

\vskip 0.3 truecm

\author{Miguel Andrés Marcos \thanks{The author was supported by Consejo Nacional
de Investigaciones Científicas y Técnicas, Agencia Nacional de Promoción Científica y Tecnológica and Universidad
Nacional del Litoral.\newline \indent Keywords and phrases:
Bessel potential, Ahlfors spaces, fractional derivative, Sobolev spaces
\newline \indent 2010 Mathematics Subject Classification: Primary
43A85.\newline }}
\affil{\footnotesize{Instituto de Matemática Aplicada del Litoral (CONICET-UNL)\\ Departamento de Matemática (FIQ-UNL)}}
\date{\vspace{-0.5cm}}

\maketitle

\begin{abstract}
In this paper we define Bessel potentials in Ahlfors regular spaces using a Coifman type approximation of the identity, and show they improve regularity for Lipschitz, Besov and Sobolev-type functions. We prove density and embedding results for the Sobolev potential spaces defined by them. Finally, via fractional derivatives, we find that for small orders, these Bessel potentials are inversible, and show a way to characterize potential spaces, using singular integrals techniques, such as the $T1$ theorem. Moreover, this characterization allows us to prove these spaces in fact coincide with the classical potential Sobolev spaces in the Euclidean case.
\end{abstract}

\setlength{\parskip}{10pt}

\section{Introduction}

Riesz and Bessel potentials of order $\alpha>0$ in $\RR^n$ are defined as the operators $\mathcal{I}_\alpha=(-\Delta)^{-\alpha/2}$ and $\mathcal{J}_\alpha=(I-\Delta)^{-\alpha/2}$ respectively, where $\Delta$ is the Laplacian and $I$ the identity. By means of the Fourier transform, it can be shown they are given by multipliers
\begin{align*}\left(\mathcal{I}_\alpha f\right)^\yy (\xi)= (2\pi|\xi|)^{-\alpha}\hat{f}(\xi), \hspace{1cm} \left(\mathcal{J}_\alpha f\right)^\yy(\xi)= (1+4\pi^2|\xi|^2)^{-\alpha/2}\hat{f}(\xi).\end{align*}

These frequency representations of Riesz and Bessel potentials, as well as of their associated fractional differential operators, depend on the existence of Fourier Transforms on the underlying space. In more general settings alternative tools are needed. Spaces such as self similar fractals are more general, but are still Ahlfors regular. In spaces with this type of regularity, scales are a good substitute of frequencies.

Both the Riesz potential and its inverse the fractional derivative $\mathscr{D}_\alpha=(-\Delta)^{\alpha/2}$, which on the frequency side is given by
\begin{align*}\left(\mathscr{D}_\alpha f\right)^\yy (\xi)= (2\pi|\xi|)^\alpha\hat{f}(\xi),\end{align*}
have an immediate generalization to metric measure spaces, as they take the form
\begin{align*}\mathcal{I}_\alpha f(x)=c_{\alpha,n}\int \frac{f(y)}{|x-y|^{n-\alpha}}dy, \hspace{1cm} \mathscr{D}_\alpha f(x)= \tilde{c}_{\alpha,n}\int\frac{f(y)-f(x)}{|x-y|^{n+\alpha}}dy,\end{align*}
at least for functions of certain integrability or regularity and $\alpha<2$. One can just replace $|x-y|^\alpha$ by a distance or quasi-distance  $d(x,y)^\alpha$, Lebesgue measure by a general measure and $|x-y|^n$ by the measure of the ball of center $x$ and radius $d(x,y)$.

For spaces of homogeneous type, fractional integrals (i.e. Riesz potentials) and derivatives, as well as their composition, have been widely studied. In the absence of Fourier transform, other techniques have been developed, such as the use of a Coifman type approximation of the identity (see for instance \cite{HS}, \cite{DJS}). It has been proven that even though the composition of a fractional integral and a fractional derivative (of the same order) is not necessarily the identity, at least for small orders of regularity it is an inversible singular integral. See \cite{GSV}, \cite{Ga} for the study of this composition in $L^2$ and \cite{HV} for Besov and Triebel-Lizorkin spaces.

Bessel potentials have essentially the same local behavior than Riesz potentials, but behave much better globally. For instance, they are bounded in every $L^p$ space, whereas $\mathcal{I}_\alpha$ is bounded from $L^p$ only to $L^q$ with $\frac{1}{p}-\frac{1}{q}=\frac{\alpha}{n}$. This leads to define potential spaces $\mathcal{L}^{\alpha,p}=\mathcal{J}_\alpha(L^p)$, and these coincide with Sobolev spaces when $\alpha$ is an integer. 

For $\alpha>0$, as
\begin{align*}2^{-\alpha/2}\leq\frac{1+(2\pi|\xi|)^\alpha}{(1+4\pi^2|\xi|^2)^{\alpha/2}}\leq 2,\end{align*}
the composition $(I+\mathscr{D}_\alpha)\mathcal{J}_\alpha$ is inversible in $L^2$. In fact, as shown in \cite{S}, for $1<p<\infty$ and $0<\alpha<2$,
\begin{align}\label{deriv}f\in \mathcal{L}^{\alpha,p} \text{ if and only if } f,\mathscr{D}_\alpha f\in L^p,\end{align}
and in terms of Riesz potentials,
\begin{align}\label{integ}f\in \mathcal{L}^{\alpha,p} \text{ if and only if } f\in L^p \text{ and there exists $\gamma\in L^p$ with }f=\mathcal{I}_\alpha\gamma.\end{align}

Bessel operators have been rarely studied in the metric setting, although in $\RR^n$ they can be represented as
\begin{align*}\mathcal{J}_\alpha f(x)=f*G_\alpha (x)=\int f(y)G_\alpha(x-y)dy,\end{align*}
where $G_\alpha$ is a radial function, so their definition does not present a limitation. In this paper we define Bessel-type potentials using the same construction found in \cite{GSV}. 

All the known tools and definitions used in this paper are described in section 2, such as approximations of the identity and singular integrals. In section 3 we define a Bessel-type potential operator and prove it increases the regularity of Lipschitz, Besov and Sobolev functions. In section 4 we describe the potential space obtained with this operator, and find relationships with Lipschitz, Besov and Sobolev functions, as well as a Sobolev embedding theorem. In section 5 we prove an inversion result for the Bessel operator using the techniques from \cite{GSV} and \cite{HV}. We finish this paper characterizing the potential space with the fractional derivative analogous to the Euclidean version in \ref{deriv} and with the fractional integral, analogous to \ref{integ}, and analyze the case of $\RR^n$.

\section{Preliminaries}

In this section we describe the geometric setting and basic results from harmonic analysis on spaces of homogeneous type needed to prove our results.

\subsection{The geometric setting}

We say $(X,\rho,m)$ is a space of homogeneous type if $\rho$ is a quasi-metric on $X$ and $m$ a measure such that balls and open sets are measurable and there exists a constant $C>0$ such that
\begin{align*}m_\rho(B(x,2r))\leq Cm(B_\rho(x,r))\end{align*}
for each $x\in X$ and $r>0$.

If $m(\{x\})=0$ for each $x\in X$, by \cite{MS} there exists a metric $d$ giving the same topology as $\rho$ and a number $N>0$ such that $(X,d,m)$ satisfies
\begin{align}\label{ahlf}m(B_d(x,2r))\sim r^N\end{align}
for each $x\in X$ and $0<r<m(X)$.

Spaces that satisfy condition \ref{ahlf} are called Ahlfors $N$-regular. Besides $\RR^n$ (with $N=n$), examples include self-similar fractals such as the Cantor ternary set or the Sierpi\'nski gasket.

Throughout this paper we will assume $(X,d,m)$ is Ahlfors $N$-regular. One useful property these spaces have is regarding the integrability of the distance function:
\begin{itemize}
	\item $\int_{B(x,r)}d(x,y)^sdm(y)<\infty$ if and only if $-N<s<\infty$, and here 
		\begin{align*}	\int_{B(x,r)}d(x,y)^sdm(y)\sim r^{s+N};\end{align*}
	\item $\int_{X\menos B(x,r)}d(x,y)^sdm(y)<\infty$ if and only if $-\infty<s<-N$, and here 
		\begin{align*}	\int_{X\menos B(x,r)}d(x,y)^sdm(y)\sim r^{s+N}.\end{align*}
\end{itemize}

If we add (locally integrable) functions we get
\begin{itemize}
	\item if $-N<s<\infty$, 
		\begin{align*}	\int_{B(x,r)}f(y)d(x,y)^sdm(y)\leq C r^{s+N}Mf(x);\end{align*}
	\item if $-\infty<s<-N$,
		\begin{align*}	\int_{X\menos B(x,r)}f(y)d(x,y)^sdm(y)\leq C r^{s+N}Mf(x),\end{align*}
\end{itemize}
where $Mf$ is the Hardy-Littlewood maximal function of $f$.

\subsection{Aproximations of the identity}

In Ahlfors spaces of infinite measure (and thus unbounded), Coifman-type aproximations of the identity can be constructed. In this paper we will work with a continuous version, as presented in \cite{GSV}. See \cite{HS} for the discrete version. The construction is as follows.

Let $(X,d,m)$ be an Ahlfors $N$-regular space with $m(X)=\infty$. Let $h:[0,\infty)\imply \RR$ be a non-negative decreasing $C^\infty$ function with $h\equiv 1$ in $[0,1/2]$ and $h\equiv 0$ in $[2,\infty)$. For $t>0$ and $f\in L^1_{loc}$, define
\begin{itemize}
	\item $T_tf(x)=\frac{1}{t^N}\int_X h\left(\frac{d(x,y)}{t}\right)f(y)dm(y)$;
	\item $M_tf(x)=\varphi(x,t)f(x)$, with $\varphi(x,t)=\frac{1}{T_t1(x)}$;
	\item $V_tf(x)=\psi(x,t)f(x)$, with $\psi(x,t)=\frac{1}{T_t\left(\frac{1}{T_t1}\right)(x)}$;
	\item $S_t f(x)=M_tT_tV_tT_tM_t f(x)=\int_X s(x,y,t)f(y)dm(y)$, where
	\begin{align*}s(x,y,t)=\frac{\varphi(x,t)\varphi(y,t)}{t^{2N}}\int_X h\left(\frac{d(x,z)}{t}\right)h\left(\frac{d(y,z)}{t}\right)\psi(z,t)dm(z). \end{align*}\end{itemize}

$(S_t)_{t>0}$ will be our aproximation of the identity, with kernel $s$. We now list some of the properties they possess, they can be found in \cite{GSV} for the case $N=1$.

\begin{enumerate}
	\item $S_t1\equiv1$ for all $t>0$;
	\item $s(x,y,t)=s(y,x,t)$ for $x,y\in X$, $t>0$;
	\item $s(x,y,t)\leq C/t^N$ for $x,y\in X$, $t>0$;
	\item $s(x,y,t)=0$ if $d(x,y)>4t$;
	\item $s(x,y,t)\geq C'/t^N$ if $d(x,y)<t/4$;
	\item $|s(x,y,t)-s(x',y,t)|\leq C''\frac{1}{t^{N+1}}d(x,x')$;
	\item $S_t$ is linear and continuous from $L^p$ to $L^p$;
	\item $S_tf\imply f$ pointwise when $t\imply 0$ if $f$ is continuous;
	\item $|S_tf(x)-f(x)|\leq Ct^\gamma$ for each $x$ if $f$ is Lipschitz-$\gamma$;
	\item $S_tf(x)\imply 0$ uniformly in $x$ when $t\imply\infty$ if $f\in L^1$;
	\item $s$ is continuously differentiable with respect to $t$.
\end{enumerate}

Continuity of a linear operator $T$ from $A$ to $B$ will be denoted throughout this paper as
\begin{align*}T:A\imply B.\end{align*}

To include an interesting example of an Ahlfors space satisfying $m(X)=\infty$ (and thus having a Coifman-type approximation of the identity), we can modify the Sierpi\'nski gasket $T$ by taking dilations (powers of 2): $\tilde{T}=\cup_{k\geq 1}2^kT$. This $\tilde{T}$ preserves some properties of the original triangle, including the Ahlfors character.

\begin{figure}[h!]\begin{center}\begin{tikzpicture}[scale=2]
\draw[fill=gray, fill opacity=0.15] (-0.5,0)--(1.5,0)--(0.5,1.74)--cycle;
\draw[fill=white] (0.5,0)--(0,0.87)--(1,0.87)--cycle;
\draw[<-] (2,0)--(1.5,0);
\draw[<-] (0.75,2.175)--(0.5,1.74);
\draw[<-] (1.5,1.16)--(1,0.87);
\draw[fill=black, fill opacity=0.7] (-0.5,0)--(0.5,0)--(0,0.87)--cycle;
\draw[fill=white] (0,0)--(0.25,0.435)--(-0.25,0.435)--cycle;
\draw[fill=white] (-0.25,0)--(-0.125,0.2175)--(-0.375,0.2175)--cycle;
\draw[fill=white] (0.25,0)--(0.375,0.2175)--(0.125,0.2175)--cycle;
\draw[fill=white] (0,0.435)--(0.125,0.6525)--(-0.125,0.6525)--cycle;
\draw[fill=white] (-0.375,0)--(-0.4375,0.10875)--(-0.3125,0.10875)--cycle;
\draw[fill=white] (0.375,0)--(0.4375,0.10875)--(0.3125,0.10875)--cycle;
\draw[fill=white] (-0.125,0)--(-0.1875,0.10875)--(-0.0675,0.10875)--cycle;
\draw[fill=white] (0.125,0)--(0.1875,0.10875)--(0.0675,0.10875)--cycle;
\draw[fill=white] (-0.25,0.2175)--(-0.3125,0.32625)--(-0.1875,0.32625)--cycle;
\draw[fill=white] (0.25,0.2175)--(0.3125,0.32625)--(0.1875,0.32625)--cycle;
\draw[fill=white] (-0.125,0.4350)--(-0.1875,0.54375)--(-0.0675,0.54375)--cycle;
\draw[fill=white] (0.125,0.4350)--(0.1875,0.54375)--(0.0675,0.54375)--cycle;
\draw[fill=white] (0,0.6525)--(-0.0675,0.76125)--(0.0675,0.76125)--cycle;
\end{tikzpicture}\end{center}\end{figure}

\subsection{Calderón reproducing formulas}

With this approximation of the identity $(S_t)_{t>0}$ we will construct our Bessel potential $J_\alpha$. For the proof relating $J_\alpha$ with the fractional derivative $D_\alpha$, we will follow the proof for the fractional integral as presented in \cite{GSV} and \cite{HV}, which requires the derivative of $S_t$ (that exists because $s$ is continuously differentiable with respect to $t$): let
\begin{align*}\frac{d}{dt}S_tf(x)=-\frac{1}{t}Q_tf(x),\end{align*} 
so
\begin{align*}Q_tf(x)=\int_X q(x,y,t)f(y)dm(y), \hspace{0.75cm}\text{with}\hspace{0.75cm}q(x,y,t)=-t\frac{d}{dt}s(x,y,t).\end{align*}

Some of their properties mirror those from $S_t$ and $s$:

\begin{enumerate}
	\item $Q_t1\equiv0$ for all $t>0$;
	\item $q(x,y,t)=q(y,x,t)$ for $x,y\in X$, $t>0$;
	\item $|q(x,y,t)|\leq C/t^N$ for $x,y\in X$, $t>0$;
	\item $q(x,y,t)=0$ if $d(x,y)>4t$;
	\item $|q(x,y,t)-q(x',y,t)|\leq C'\frac{1}{t^{N+1}}d(x,x')$;
	\item $Q_t: L^p\imply L^p$;
	\item \textsl{Calderón-type reproducing formulas.} (see \cite{C})
		\begin{align*}f=\int_0^\infty Q_tf\frac{dt}{t}, \hspace{1cm}f=\int_0^\infty\int_0^\infty Q_tQ_sf\frac{dt}{t}\frac{ds}{s}.\end{align*}
\end{enumerate}

\subsection{Singular Integrals}

In Ahlfors $N$-regular spaces, the following version of the $T1$ theorem hold (see for instance \cite{Ga}). Once again we require $m(X)=\infty$.

A continuous function $K:X\times X\menos\Delta\imply\RR$ (where $\Delta=\{(x,x):x\in X\}$) is a standard kernel if there exist constants $0<\eta\leq 1$, $C>0$ such that
\begin{itemize}
	\item $|K(x,y)|\leq Cd(x,y)^{-N}$;
	\item for $x\neq y$, $d(x,x')\leq cd(x,y)$ (with $c<1$) we have
	\begin{align*}|K(x,y)-K(x',y)|\leq Cd(x,x')^\eta d(x,y)^{-(N+\eta)};\end{align*}
	\item for $x\neq y$, $d(y,y')\leq cd(x,y)$ (with $c<1$) we have
	\begin{align*}|K(x,y)-K(x,y')|\leq Cd(y,y')^\eta d(x,y)^{-(N+\eta)}.\end{align*}
\end{itemize}

Let $C_c^\gamma$ denote the space of Lipschitz-$\gamma$ functions with compact support. A linear continuous operator $T:C_c^\gamma\imply (C_c^\gamma)'$ for $0<\gamma\leq 1$ is a singular integral operator with associated standard kernel $K$ if it satisfies
\begin{align*}\langle Tf,g\rangle = \iint K(x,y)f(y)g(x)dm(y)dm(x),\end{align*}
for $f,g\in C_c^\gamma$ with disjoint supports. If a singular integral operator can be extended to a bounded operator on $L^2$ it is called a Calderón-Zygmund operator or CZO.

Every CZO is bounded in $L^p$ for $1<p<\infty$, of weak type $(1,1)$, and bounded from $L^\infty$ to $BMO$.

The $T1$ theorem characterizes CZO's. We say that an operator is weakly bounded if
\begin{align*}|\langle Tf,g\rangle|\leq Cm(B)^{1+2\gamma/N}[f]_\gamma[g]_\gamma,\end{align*}
for $f,g\in C_c^\gamma(B)$, for each ball $B$.

\begin{thm}\label{tedeuno} ($\mathbf{T1}$) Let $T$ be a singular integral operator. Then $T$ is a CZO if and only if $T1,T^*1\in BMO$ and $T$ is weakly bounded.\end{thm}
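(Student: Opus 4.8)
The plan is to establish the two implications separately; the forward direction is routine and the converse is the substance of the theorem. For the forward direction, suppose $T$ is a CZO. Weak boundedness is then immediate: for $f,g\in C_c^\gamma(B)$ with $B=B(x_0,r)$, Cauchy--Schwarz and $L^2$-boundedness give $|\langle Tf,g\rangle|\le \|T\|_{L^2\to L^2}\|f\|_2\|g\|_2$, while compact support inside $B$ together with the Ahlfors bound $m(B)\sim r^N$ yields $\|f\|_2\le \|f\|_\infty\, m(B)^{1/2}\lesssim [f]_\gamma\, r^\gamma m(B)^{1/2}=[f]_\gamma\, m(B)^{1/2+\gamma/N}$, and similarly for $g$; multiplying gives precisely the weak boundedness bound with exponent $1+2\gamma/N$. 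That $T1,T^*1\in BMO$ is the standard fact that a CZO maps $L^\infty\to BMO$; since $1$ is not compactly supported, one first defines $T1$ locally by testing against mean-zero functions in $C_c^\gamma$ and checks, using the size and regularity bounds on $K$ together with the integrability properties of the distance, that this definition is independent of the truncation, so that $T1$ is a well-defined $BMO$ element (and likewise $T^*1$).

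For the converse, assume $T1,T^*1\in BMO$ and $T$ weakly bounded; I must produce the $L^2$ bound. The first step is to reduce to the case $T1=T^*1=0$ by subtracting paraproducts. For $b\in BMO$ I would build a paraproduct $\Pi_b$ from the approximation of the identity and the operators $Q_t=-t\frac{d}{dt}S_t$, of the schematic form
\[\Pi_b f=\int_0^\infty Q_t\big((Q_tb)\,S_tf\big)\,\frac{dt}{t},\]
and verify, using the cancellation $Q_t1=0$, the size and regularity estimates on $q$, the identity $S_t1=1$, and the Calder\'on reproducing formulas, that (after the appropriate normalization) $\Pi_b$ is a singular integral operator with standard kernel which extends to a CZO and satisfies $\Pi_b1=b$ and $(\Pi_b)^*1=0$. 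Setting
\[\tilde T=T-\Pi_{T1}-(\Pi_{T^*1})^*,\]
the operator $\tilde T$ is again a weakly bounded singular integral operator, and by construction $\tilde T1=0$ and $\tilde T^*1=0$. Since the paraproducts are already CZOs, $T$ is a CZO if and only if $\tilde T$ is, so it suffices to treat the case $T1=T^*1=0$.

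The heart of the argument is then an almost-orthogonality estimate. Using the reproducing formula on both variables and the self-adjointness of $Q_t$ (property~2), write
\[\langle Tf,g\rangle=\int_0^\infty\!\!\int_0^\infty \langle Q_tTQ_sf,g\rangle\,\frac{ds}{s}\frac{dt}{t},\]
and reduce matters to showing that for some $\epsilon>0$,
\[\|Q_tTQ_s\|_{L^2\to L^2}\le C\Big(\min\big(\tfrac{t}{s},\tfrac{s}{t}\big)\Big)^{\epsilon}.\]
Here all three hypotheses enter: in the off-diagonal range $s\ll t$ one uses $Q_s1=0$ to subtract a constant inside the integral in the $s$-variable and gains a factor $(s/t)^\eta$ from the H\"older regularity of the kernel, symmetrically one uses $Q_t1=0$ when $t\ll s$, and in the diagonal range $s\sim t$ one invokes weak boundedness together with the compact support and size of $q$. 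Granting this decay, I would finish with a Schur-type argument: since the kernel $K(t,s)=(\min(t/s,s/t))^\epsilon$ satisfies $\int_0^\infty K(t,s)\frac{ds}{s}=2/\epsilon<\infty$, combining the displayed bound with the square-function identity $\int_0^\infty\|Q_tf\|_2^2\,\frac{dt}{t}\sim\|f\|_2^2$ (a consequence of the reproducing formula) yields $|\langle Tf,g\rangle|\lesssim\|f\|_2\|g\|_2$, hence $L^2$-boundedness.

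I expect the almost-orthogonality estimate on $\|Q_tTQ_s\|$ to be the main obstacle: it is where the cancellation conditions $\tilde T1=\tilde T^*1=0$, the kernel regularity exponent $\eta$, and the weak boundedness property must be combined quantitatively, and it relies crucially on the decay, compact support, cancellation and smoothness of the kernel $q$ of $Q_t$ recorded in its list of properties. The paraproduct construction, while technical, is comparatively mechanical once the mapping and reproducing-formula properties of $S_t$ and $Q_t$ are in hand.
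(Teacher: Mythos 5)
The paper never proves Theorem \ref{tedeuno}: it is stated in the preliminaries as a known result, with a citation to \cite{Ga} (the underlying machinery going back to \cite{DJS} and \cite{HS}), and is only \emph{applied} later, in Section 5, to the operators $T_{\alpha,v}$. So there is no in-paper proof to compare against; judged on its own terms, your outline follows the classical David--Journ\'e scheme transported to the Ahlfors-regular setting, which is precisely the route taken in the literature the paper cites. The forward direction is fine as you wrote it (in particular $\|f\|_\infty\lesssim [f]_\gamma\, m(B)^{\gamma/N}$ is legitimate here because $m(X)=\infty$ forces $X$ unbounded, so one can compare with a point outside the support), and your identification of the almost-orthogonality estimate as the crux, with the three ranges $s\ll t$, $t\ll s$, $s\sim t$ handled respectively by $Q_s1=0$, $Q_t1=0$, and weak boundedness (the kernels $q(\cdot,y,t)$ being Lipschitz with compact support, hence admissible $C_c^\gamma$ test functions), is correct.

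Two sketch-level gaps would need repair before this counts as a proof. First, the paraproduct: with the reproducing formula as the paper lists it, $f=\int_0^\infty Q_tf\,\frac{dt}{t}$, your $\Pi_b$ gives $\Pi_b1=\int_0^\infty Q_t^2\,b\,\frac{dt}{t}$, which is \emph{not} $b$; the ``appropriate normalization'' you wave at is a genuine construction --- one needs a companion family $\widetilde Q_t$ with $\int_0^\infty Q_t\widetilde Q_t\,\frac{dt}{t}=I$, or the invertibility of $\int_0^\infty Q_t^2\,\frac{dt}{t}$ --- and the $L^2$-boundedness of $\Pi_b$ is not ``comparatively mechanical'': it is exactly the Carleson-measure estimate that $|Q_tb(x)|^2\,dm(x)\,\frac{dt}{t}$ is a Carleson measure for $b\in BMO$, which is where the hypothesis $T1,T^*1\in BMO$ actually does its work and where a quantitative $BMO$ argument must be supplied. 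Second, the reduction as displayed does not close: from $\langle Tf,g\rangle=\iint\langle Q_tTQ_sf,g\rangle\,\frac{ds}{s}\frac{dt}{t}$, bounding each term by $\|Q_tTQ_s\|_{L^2\to L^2}\,\|f\|_2\,\|g\|_2$ and integrating the Schur kernel $\min(t/s,s/t)^\epsilon$ is divergence-free but loses all gain, giving only $C\|f\|_2\|g\|_2$ times a constant \emph{per term} with nothing left to sum against. One must keep the shape $\langle Q_tTQ_s(\widetilde Q_sf),\widetilde Q_tg\rangle$, apply Cauchy--Schwarz in $(s,t)$ against the Schur kernel, and then invoke $\int_0^\infty\|\widetilde Q_sf\|_2^2\,\frac{ds}{s}\lesssim\|f\|_2^2$; this again requires the two-factor (equal-scale) reproducing formula, not the one-factor identity you display. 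Since you do cite the square-function identity, you evidently have the right mechanism in mind, but the argument as written conflates the two formulas, and making the Cotlar/Schur step precise is exactly the kind of detail the cited sources spend their effort on.
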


\subsection{Besov spaces}

In metric measure spaces $(X,d,m)$, Besov spaces can be defined through a modulus of continuity, as seen in \cite{GKS}. For $1\leq p<\infty$ and $t>0$, the $p$-modulus of continuity of a locally integrable function $f$ is defined as
\begin{align*}E_pf(t)=\left(\int_X\fint_{B(x,t)}|f(x)-f(y)|^pdm(y)dm(x)\right)^{1/p},\end{align*}
where $\fint_A fdm$ denotes the average $\frac{1}{m(A)}\int_A fdm$, and the Besov space $B^\alpha_{p,q}$ for $\alpha>0$ and $1\leq q\leq \infty$ is the space of functions $f$ with the following finite norm
\begin{align*}\|f\|_{B^\alpha_{p,q}}=\|f\|_p+\left(\int_0^\infty t^{-\alpha q}E_pf(t)^q\frac{dt}{t}\right)^{1/q}\end{align*}
(with the usual modification for $q=\infty$).

For the case $p=q$, if the measure is doubling, an equivalent definition of the norm is
\begin{align*}\|f\|_{B^\alpha_{p,q}}=\|f\|_p+\left(\iint\frac{|f(x)-f(y)|^p}{d(x,y)^{\alpha p}m(B(x,d(x,y))}dm(y)dm(x)\right)^{1/q}.\end{align*}

\subsection{Sobolev spaces}

A way of defining Sobolev spaces in arbitrary metric measure spaces is Haj\symbol{170}asz approach (see \cite{H1} for the case $\beta=1$): a nonnegative function $g$ is a $\beta$-Haj\symbol{170}asz gradient of a function $f$ it the following inequality holds for almost every pair $x,y\in X$
\begin{align*}|f(x)-f(y)|\leq d(x,y)^\beta (g(x)+g(y)).\end{align*}

For $1\leq p\leq \infty$, the Haj\symbol{170}asz-Sobolev (fractional) space $M^{\beta,p}$ is defined as the space of functions $f\in L^p$ that have a gradient in $L^p$. Its norm is defined as
\begin{align*}\|f\|_{M^{\beta,p}}=\|f\|_p+\inf_g \|g\|_p\end{align*}
where the infimum is taken over all $\beta$-Haj\symbol{170}asz gradients of $f$.

For the case $p=\infty$, the space $M^{\beta,\infty}$ coincides with the space $C^\beta$ of bounded Lipschitz-$\beta$ functions.

Functions with $\beta$-Haj\symbol{170}asz gradients satisfy the following Poincaré inequality
\begin{align*}\fint_B |f-f_B|dm\leq C\text{diam}(B)^\beta\fint_B gdm,\end{align*}
for all balls $B$ (again, see \cite{H1} for the case $\beta=1$).

If the measure is doubling and $1\leq p<\infty$, then the following relationships hold between Besov and Sobolev spaces, for $\beta>0$ and $0<\epsilon<\beta$
\begin{align*}B^\beta_{p,p}\embeds M^{\beta,p}\embeds B^{\beta-\epsilon}_{p,p}\end{align*}
(see \cite{GKS}). Here the expression $A\embeds B$ means $A\subset B$ with continuous inclusion.

\section{Bessel potentials}

In this section we define the kernel $k_\alpha(x,y)$, to replace the convolution kernel $G_\alpha$ in the definition of $\mathcal{J}_\alpha$, and prove some properties this new Bessel-type potential operator $J_\alpha$ possesses, emulating those from $\mathcal{J}_\alpha$.

The convolution kernel $G_\alpha$ takes the form
\begin{align*}G_\alpha(x-y)=c_{n,\alpha}\int_0^\infty \left(t^\alpha e^{-t^2}\right)\left(t^{-n}e^{-\frac{1}{4}\left(\frac{|x-y|}{t}\right)^2}\right)\frac{dt}{t},\end{align*}
where $\varphi_t(x)=t^{-n}e^{-\frac{1}{4}\left(\frac{|x-y|}{t}\right)^2}$ is the Gaussian approximation of the identity. This provides us with a way to define the kernel in our context.

Let $(X,d,m)$ be our fixed Ahlfors $N$-regular space with $m(X)=\infty$, and $(S_t)_{t>0}$ an approximation of the identity as constructed in the previous section.

For $\alpha>0$, we define
\begin{align*}k_\alpha(x,y)=\alpha\int_0^\infty \frac{t^\alpha}{(1+t^\alpha)^2}s(x,y,t)\frac{dt}{t}.\end{align*}

Observe that the factor multiplying the approximation of the identity is $\frac{t^\alpha}{(1+t^\alpha)^2}$, as opposed to $t^\alpha e^{-t^2}$ in $G_\alpha$. It presents the same local behaviour, but near infinity it has only integrable decay. However, the properties obtained for $k_\alpha$ will be sufficient for our purposes.

The following properties follow immediately from definition and the properties of the kernel $s$, listed in section 2.

\begin{lem}\label{propk}$k_\alpha$ satisfies:
\begin{enumerate}
	\item $k_\alpha\geq 0$;
	\item $k_\alpha(x,y)=k_\alpha(y,x)$
	\item $k_\alpha(x,y)\leq C d(x,y)^{-(N-\alpha)}$;
	\item $k_\alpha(x,y)\leq C d(x,y)^{-(N+\alpha)}$ if $d(x,y)\geq4$;
	\item $|k_\alpha(x,z)-k_\alpha(y,z)|\leq C d(x,y)(d(x,z)\yy d(y,z))^{-(N+1-\alpha)}$;
	\item $|k_\alpha(x,z)-k_\alpha(y,z)|\leq C d(x,y)(d(x,z)\yy d(y,z))^{-(N+1+\alpha)}$ if $d(x,z)\geq 4$ and $d(y,z)\geq 4$;
	\item $\int_X k_\alpha(x,z)dm(z)=\int_X k_\alpha(z,y)dm(z)=1$ $\forall x,y$.
\end{enumerate}
\end{lem}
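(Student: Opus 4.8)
The plan is to reduce every item to the corresponding property of the kernel $s$ listed in Section 2, together with an elementary analysis of the scalar multiplier $\mu(t)=\frac{t^\alpha}{(1+t^\alpha)^2}$. Items 1 and 2 are immediate: the integrand defining $k_\alpha$ is the nonnegative factor $\mu(t)$ times $s(x,y,t)\geq 0$ (nonnegativity of $s$ is built into its construction, since $h,\varphi,\psi\geq 0$), which gives item 1, while item 2 follows by integrating the symmetry $s(x,y,t)=s(y,x,t)$ against $\mu(t)\frac{dt}{t}$.

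For items 3--6 the key observation is that $\mu(t)\leq t^\alpha$ for all $t$ (since $(1+t^\alpha)^2\geq 1$) and $\mu(t)\leq t^{-\alpha}$ for all $t$ (since $(1+t^\alpha)^2\geq t^{2\alpha}$), so $t=1$ is the natural threshold separating the two estimates. Combined with the support property $s(x,y,t)=0$ for $t<d(x,y)/4$, the $t$-integral defining $k_\alpha(x,y)$ effectively runs over $[d(x,y)/4,\infty)$. For item 3 I would insert the size bound $s\leq Ct^{-N}$ and split at $t=1$: on $[d(x,y)/4,1]$ use $\mu(t)\leq t^\alpha$ to get $\int t^{\alpha-N-1}\,dt\sim d(x,y)^{-(N-\alpha)}$, where the hypothesis $0<\alpha<N$ is exactly what makes the lower endpoint dominate; on $[1,\infty)$ use $\mu(t)\leq t^{-\alpha}$ to get a finite constant, which is absorbed into $d(x,y)^{-(N-\alpha)}$ when $d(x,y)\lesssim 1$, the complementary range being covered by item 4. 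Item 4 is the same computation restricted to $d(x,y)\geq 4$: then the whole integral lives on $t\geq 1$, the bound $\mu(t)\leq t^{-\alpha}$ applies throughout, and $\int_{d(x,y)/4}^\infty t^{-\alpha-N-1}\,dt\sim d(x,y)^{-(N+\alpha)}$.

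Items 5 and 6 run in parallel, but with the smoothness estimate $|s(x,z,t)-s(y,z,t)|\leq Ct^{-(N+1)}d(x,y)$ replacing the size bound, and with $\delta=d(x,z)\wedge d(y,z)$ (the smaller of the two distances) controlling the support, since the difference vanishes for $t<\delta/4$. The one point requiring care---and the main obstacle---is that this smoothness estimate is only efficient when $d(x,y)\lesssim t$. Since the integration variable satisfies $t\geq\delta/4$, I would split into the regime $d(x,y)<\delta/4$, where $d(x,y)<t$ throughout and the computation mirrors items 3--4 with an extra factor $d(x,y)$ and one more power of $t^{-1}$, yielding $d(x,y)\,\delta^{-(N+1-\alpha)}$ for item 5 and $d(x,y)\,\delta^{-(N+1+\alpha)}$ when $\delta\geq 4$ for item 6; and the complementary regime $d(x,y)\geq\delta/4$, where the smoothness estimate is useless and instead I bound the difference trivially by $k_\alpha(x,z)+k_\alpha(y,z)\leq C\delta^{-(N-\alpha)}$ using item 3, then check that $\delta^{-(N-\alpha)}\leq C\,d(x,y)\,\delta^{-(N+1-\alpha)}$ precisely because $d(x,y)\geq\delta/4$ (and likewise from item 4 for item 6).

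Finally, item 7 is a direct computation. Since the integrand is nonnegative, Tonelli's theorem permits exchanging the order of integration, and $\int_X s(x,z,t)\,dm(z)=S_t1(x)=1$ (property 1 of $s$) collapses the spatial integral, leaving $\alpha\int_0^\infty t^{\alpha-1}(1+t^\alpha)^{-2}\,dt$. The substitution $u=t^\alpha$ turns this into $\int_0^\infty (1+u)^{-2}\,du=1$. The second identity $\int_X k_\alpha(z,y)\,dm(z)=1$ then follows from the first by the symmetry of item 2.
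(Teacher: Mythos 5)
Your proof is correct and follows exactly the route the paper intends: the paper states that these properties ``follow immediately from definition and the properties of the kernel $s$,'' omitting the details, and your argument is precisely that reduction, with the right ingredients in place --- the two multiplier bounds $\frac{t^\alpha}{(1+t^\alpha)^2}\leq t^\alpha\wedge t^{-\alpha}$ split at $t=1$, the support restriction $t\geq d(x,y)/4$ (resp.\ $t\geq(d(x,z)\wedge d(y,z))/4$ for differences), the case split $d(x,y)\lessgtr\delta/4$ with the trivial bound via items 3--4 in the far regime, and the substitution $u=t^\alpha$ for item 7. You also correctly flag the implicit hypothesis $\alpha<N$ needed for item 3, which the paper leaves tacit.
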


All results that will be presented in sections 3 and 4 involving the kernel $k_\alpha$ can be derived from just these properties. The actual need for the definition will become clear in section 5.
%

We are now able to define our Bessel potential
\begin{align*}J_\alpha g(x)=\int_X g(z)k_\alpha(x,z)dm(z).\end{align*}

Observe that from property $7$ of the last lemma, we get
\begin{align*}\|J_\alpha g\|_p\leq \|g\|_p\end{align*}
for $1\leq p\leq\infty$.

As expected, we can compare this operator with the Riesz potential, which is be defined from the kernel
\begin{align*}k'(x,y)=\int_0^\infty \alpha t^\alpha s(x,y,t)\frac{dt}{t}\sim \frac{1}{d(x,y)^{N-\alpha}}\end{align*}
as
\begin{align*}I_\alpha f(x)=\int_X f(y)k'(x,y)dm(y),\end{align*}
(see \cite{GSV}) and we obtain $|J_\alpha g(x)|\leq C I_\alpha |g|(x)$.

We now proceed to prove $J_\alpha$ improves regularity on Lipschitz, Besov and Haj\symbol{170}asz-Sobolev functions. We start with the Lipschitz case

\begin{propo}If $f=J_\alpha g$ and $\alpha+\beta<1$ for $\alpha,\beta>0$,
\begin{align*}|f(x)-f(y)|\leq C[g]_\beta d(x,y)^{\alpha+\beta}.\end{align*}
In particular, as $J_\alpha$ is bounded in $L^\infty$,
\begin{align*}J_\alpha: C^\beta\imply C^{\alpha+\beta}.\end{align*}
\end{propo}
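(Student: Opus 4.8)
The plan is to estimate the difference $f(x)-f(y)=\int_X g(z)\,[k_\alpha(x,z)-k_\alpha(y,z)]\,dm(z)$ directly from the properties of $k_\alpha$ collected in Lemma \ref{propk}. The first step exploits the cancellation property $7$: since $\int_X k_\alpha(x,z)\,dm(z)=\int_X k_\alpha(y,z)\,dm(z)=1$, the integral in $z$ of $k_\alpha(x,z)-k_\alpha(y,z)$ vanishes, so I may subtract the constant $g(x)$ and write
\begin{align*}f(x)-f(y)=\int_X [g(z)-g(x)]\,[k_\alpha(x,z)-k_\alpha(y,z)]\,dm(z).\end{align*}
The point is that the Lipschitz-$\beta$ bound $|g(z)-g(x)|\leq [g]_\beta\,d(x,z)^\beta$ now supplies a factor vanishing as $z\to x$, which is what makes the kernel singularity integrable.

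Next I would set $r=d(x,y)$ and split the integral into a near region $d(x,z)\leq 2r$ and a far region $d(x,z)>2r$. On the near region I do not use the smoothness of the kernel: I bound the two terms separately, using $|g(z)-g(x)|\leq[g]_\beta d(x,z)^\beta$ together with the size estimate $k_\alpha(x,z)\leq C\,d(x,z)^{-(N-\alpha)}$ from property $3$ (and the analogous bound for $k_\alpha(y,z)$, noting that $d(x,z)\leq 2r$ forces $d(y,z)\leq 3r$). The resulting integrand behaves like $d(x,z)^{\alpha+\beta-N}$, and since $\alpha+\beta>0$ the exponent exceeds $-N$; the interior integrability estimate from the preliminaries then gives a contribution of order $r^{\alpha+\beta}$, as desired.

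On the far region I use the regularity of the kernel, property $5$, which yields $|k_\alpha(x,z)-k_\alpha(y,z)|\leq C\,r\,(d(x,z)\yy d(y,z))^{-(N+1-\alpha)}$. Because $d(x,z)>2r=2d(x,y)$ forces $d(y,z)\geq d(x,z)/2$, one has $(d(x,z)\yy d(y,z))\sim d(x,z)$, so the difference is controlled by $C\,r\,d(x,z)^{-(N+1-\alpha)}$. Combined with $|g(z)-g(x)|\leq[g]_\beta d(x,z)^\beta$, the integrand is of order $r\,d(x,z)^{\beta-N-1+\alpha}$, and I would integrate using the exterior integrability estimate. This is the crux of the argument and the place where the hypothesis $\alpha+\beta<1$ is essential: the exterior integral converges precisely when the exponent $\beta-N-1+\alpha$ is below $-N$, that is, when $\alpha+\beta<1$, and it then contributes a factor $r^{\beta-1+\alpha}$; multiplied by the $r$ coming from property $5$ this again gives $r^{\alpha+\beta}$.

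Adding the two regions yields $|f(x)-f(y)|\leq C[g]_\beta d(x,y)^{\alpha+\beta}$. The main obstacle is exactly the control of the far region at infinity: the kernel $k_\alpha$ has only integrable polynomial decay globally (property $4$ is much weaker than the Gaussian decay of $G_\alpha$), so the tail estimate must rely on the smoothness bound of property $5$, and it is the borderline condition $\alpha+\beta<1$ that keeps this tail integral finite. The final statement $J_\alpha:C^\beta\imply C^{\alpha+\beta}$ then follows by combining this estimate with the already-noted $L^\infty$ boundedness of $J_\alpha$.
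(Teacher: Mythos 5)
Your proposal is correct and follows essentially the same route as the paper: the same cancellation via $\int_X k_\alpha(x,z)\,dm(z)=1$, the same split at $d(x,z)\leq 2d(x,y)$ with the size bound (property 3 of Lemma \ref{propk}) on the near region --- including the enlargement to $B(y,3d(x,y))$ for the $k_\alpha(y,z)$ term --- and the smoothness bound (property 5) with $d(x,z)\sim d(y,z)$ on the far region, where the hypothesis $\alpha+\beta<1$ enters exactly as you identify, to make the exterior integral converge.
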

\begin{proof}We will prove only the first part, the second follows immediately. What we will show also holds true for $I_\alpha$, as shown in \cite{GSV}. As $\int k_\alpha=1$, we have
\begin{align*}f(x)-f(y) &=\int_X g(z)\left(k_\alpha(x,z)-k_\alpha(y,z)\right)dm(z)\\
&=\int_X (g(z)-g(x))\left(k_\alpha(x,z)-k_\alpha(y,z)\right)dm(z),\end{align*}
and if we call $d=d(x,y)$
\begin{align*}|f(x)-f(y)|&\leq C\int_{B(x,2d)}\frac{|g(x)-g(z)|}{d(x,z)^{N-\alpha}}dm(z)\\
&\hspace{1cm}+C\int_{B(y,3d)}\frac{|g(x)-g(z)|}{d(y,z)^{N-\alpha}}dm(z)\\
&\hspace{1cm}+C\int_{X\menos B(x,2d)}|g(z)-g(x)|\left|k_\alpha(x,z)-k_\alpha(y,z)\right|dm(z)\\
&= I + II + III.\end{align*}
Then for $I$ and $II$, as $\alpha,\beta>0$,
\begin{align*}I\leq C[g]_\beta\int_{B(x,2d)}\frac{d(x,z)^\beta}{d(x,z)^{N-\alpha}}dm(z)\leq C[g]_\beta d^{\alpha+\beta},\end{align*}
\begin{align*}II\leq C[g]_\beta d^\beta\int_{B(y,3d)}\frac{1}{d(y,z)^{N-\alpha}}dm(z)\leq C[g]_\beta d^{\alpha+\beta}.\end{align*}
Finally, as $d(x,z)\sim d(y,z)$ for $z\in X\menos B(x,2d)$, and as $\alpha+\beta<1$,
\begin{align*}III\leq C[g]_\beta d\int_{X\menos B(x,2d)}d(x,z)^\beta d(x,z)^{-(N+1-\alpha)}dm(z)\leq C[g]_\beta d^{\alpha+\beta}.\end{align*}
\end{proof}

Before proving the increase in Besov regularity, we need the following lemma, that follows from properties 3 and 5 of \ref{propk}:

\begin{lem}\label{ineqk}For $q>0$ and $x,y\in X$,
\begin{itemize}
	\item if $q(N-\alpha)<N$,
	\begin{align*}\int_{d(x,z)<2d(x,y)}|k_\alpha(x,z)-k_\alpha(y,z)|^qdm(z)\leq C d(x,y)^{N-q(N-\alpha)};\end{align*}
	\item if $N<q(N-\alpha+1)$,
	\begin{align*}\int_{d(x,z)\geq2d(x,y)}|k_\alpha(x,z)-k_\alpha(y,z)|^qdm(z)\leq C d(x,y)^{N-q(N-\alpha)}.\end{align*}
\end{itemize}\end{lem}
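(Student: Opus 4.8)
The plan is to estimate the two integrals separately, using on each region whichever of the two pointwise bounds for $k_\alpha$ matches the exponent condition in the hypothesis --- the plain size estimate (property 3) for the near region and the H\"older-type difference estimate (property 5) for the far region --- and then to invoke the power-of-distance integrability estimates recorded in the geometric-setting subsection. Throughout I write $d=d(x,y)$.

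For the first inequality, on the region $d(x,z)<2d$ I would exploit no cancellation. Since $k_\alpha\geq0$ (property 1), I bound $|k_\alpha(x,z)-k_\alpha(y,z)|\leq k_\alpha(x,z)+k_\alpha(y,z)$ and use $(a+b)^q\leq C_q(a^q+b^q)$. Property 3 gives $k_\alpha(x,z)^q\leq C d(x,z)^{-q(N-\alpha)}$ and likewise for $k_\alpha(y,z)$. The $x$-term integrates over $B(x,2d)$, while the $y$-term integrates over a region contained in $B(y,3d)$, since $d(y,z)\leq d(x,z)+d<3d$ there. As the hypothesis $q(N-\alpha)<N$ is exactly $-q(N-\alpha)>-N$, the interior estimate $\int_{B(x,r)}d(x,z)^s\,dm(z)\sim r^{s+N}$ applies with $s=-q(N-\alpha)$ and $r\sim d$, yielding $Cd^{\,N-q(N-\alpha)}$ in both cases and closing the first bound.

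For the second inequality, on $d(x,z)\geq 2d$ I would use the smoothness estimate, after a short geometric step: when $d(x,z)\geq 2d$ the triangle inequality gives $d(y,z)\geq d(x,z)-d\geq d(x,z)/2$, hence $d(x,z)\yy d(y,z)\geq d(x,z)/2$, and property 5 becomes $|k_\alpha(x,z)-k_\alpha(y,z)|\leq C d\, d(x,z)^{-(N+1-\alpha)}$. Raising to the $q$-th power and integrating, the hypothesis $N<q(N-\alpha+1)$ is precisely $-q(N+1-\alpha)<-N$, so the exterior estimate $\int_{X\menos B(x,r)}d(x,z)^s\,dm(z)\sim r^{s+N}$ applies with $s=-q(N+1-\alpha)$ and $r\sim d$. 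This produces $Cd^q\cdot d^{\,N-q(N+1-\alpha)}=Cd^{\,N-q(N-\alpha)}$, collecting the exponent $q+N-q(N+1-\alpha)=N-q(N-\alpha)$.

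I expect no serious obstacle here; the content is routine once the right bound is paired with the right region. The only points needing care are the bookkeeping that replaces the annular integration regions by balls (resp. complements of balls) centered at $x$ and $y$ through the triangle inequality, the comparison $d(x,z)\yy d(y,z)\geq d(x,z)/2$ that lets property 5 be rewritten purely in terms of $d(x,z)$, and the verification that the two exponent hypotheses $q(N-\alpha)<N$ and $N<q(N-\alpha+1)$ are exactly the integrability thresholds $s>-N$ and $s<-N$. Everything else is direct substitution into the two integrability formulas.
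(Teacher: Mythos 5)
Your proposal is correct and coincides with the paper's own argument, which is left implicit: the paper merely asserts the lemma ``follows from properties 3 and 5 of \ref{propk}'', and your pairing of the size bound (property 3) with the ball $B(x,2d)$ and the inclusion of the $y$-term's region in $B(y,3d)$, together with the smoothness bound (property 5) rewritten via $d(y,z)\geq d(x,z)/2$ on the complement, plugged into the interior and exterior integrability estimates for $d(x,z)^s$, is exactly that route. Your exponent bookkeeping, including the observation that the two hypotheses $q(N-\alpha)<N$ and $N<q(N-\alpha+1)$ are precisely the thresholds $s>-N$ and $s<-N$ for those estimates, is accurate.
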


\begin{propo}If $f=J_\alpha g$ and $\alpha+\beta<1$ for $\alpha,\beta>0$,
\begin{align*}\iint_{X\times X}\frac{|f(x)-f(y)|^p}{d(x,y)^{N+(\alpha+\beta)p}}dm(y)dm(x)\leq C\iint_{X\times X}\frac{|g(x)-g(z)|^p}{d(x,z)^{N+\beta p}}dm(z)dm(x).\end{align*}
In particular, as $J_\alpha$ is bounded in $L^p$,
\begin{align*}J_\alpha: B^\beta_{p,p}\imply B^{\alpha+\beta}_{p,p}.\end{align*}\end{propo}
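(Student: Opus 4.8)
The plan is to begin exactly as in the Lipschitz proposition and then to refine the estimate so that the oscillation of $g$ is controlled \emph{scale by scale}; this refinement is unavoidable, because a pointwise bound of the form $|f(x)-f(y)|\lesssim d(x,y)^{\alpha+\beta}$ alone only places $f$ in $C^{\alpha+\beta}$, which does not embed into $B^{\alpha+\beta}_{p,p}$ for finite $p$, so the summability of oscillations across scales that $g\in B^\beta_{p,p}$ encodes must be retained. Using property $7$ of Lemma \ref{propk} and writing $d=d(x,y)$, I first record
\begin{align*}f(x)-f(y)=\int_X(g(z)-g(x))\bigl(k_\alpha(x,z)-k_\alpha(y,z)\bigr)dm(z),\end{align*}
and split the $z$-integral at $d(x,z)=2d$ into a near and a far part. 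On the near part I use $|k_\alpha(x,z)-k_\alpha(y,z)|\leq k_\alpha(x,z)+k_\alpha(y,z)$ with property $3$, and on the far part the smoothness estimate of property $5$, which contributes one extra power of $d$.

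The key point is to decompose each part into dyadic annuli $d(x,z)\sim\rho$ and to estimate the contribution of each annulus by Hölder's inequality. On a fixed annulus the kernel factor is controlled by the bounds underlying Lemma \ref{ineqk}, but localised: since $d(x,z)\sim\rho$ there, $\bigl(\int_{\text{annulus}}|k_\alpha(x,z)-k_\alpha(y,z)|^{p'}dm(z)\bigr)^{1/p'}$ is bounded for \emph{every} $p'$, so the dyadic splitting sidesteps the restriction $p'(N-\alpha)<N$ that a single global application of Lemma \ref{ineqk} on the whole ball would impose. Writing $\Theta_\rho(x)=\bigl(\fint_{B(x,\rho)}|g(z)-g(x)|^pdm(z)\bigr)^{1/p}$ and evaluating the measures of the annuli by Ahlfors regularity, this yields a bound of the schematic form
\begin{align*}|f(x)-f(y)|\lesssim d^\alpha|g(x)-g(y)|+d^\alpha\sum_{j\in\ZZ}b_j\,\Theta_{2^{j}d}(x),\qquad b_j\sim 2^{j\alpha}\ (j\leq0),\quad b_j\sim 2^{-j(1-\alpha)}\ (j\geq1).\end{align*}
The term $d^\alpha|g(x)-g(y)|$ is harmless, since after division by $d^{N+(\alpha+\beta)p}$ and integration it reproduces the $B^\beta_{p,p}$ seminorm of $g$ directly; the contribution from recentring $k_\alpha(y,z)$ at $y$ is handled symmetrically by writing $g(z)-g(x)=(g(z)-g(y))+(g(y)-g(x))$ and invoking the symmetry of the double integral in $x$ and $y$.

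It then remains to treat $\Psi(x,t):=t^\alpha\sum_{j}b_j\,\Theta_{2^{j}t}(x)$. I take the $L^p(dm(x))$ norm and integrate in $y$; since $\Psi$ depends on $y$ only through $t=d(x,y)$, Ahlfors regularity gives
\begin{align*}\int_X\frac{\Psi(x,d(x,y))^p}{d(x,y)^{N+(\alpha+\beta)p}}dm(y)\sim\int_0^\infty\Psi(x,t)^p\,t^{-(\alpha+\beta)p}\frac{dt}{t}.\end{align*}
Applying Minkowski's inequality in the index $j$, using the exact identity $\|\Theta_\rho\|_p=E_pg(\rho)$, and substituting $s=2^{j}t$ in each summand, the whole estimate collapses to
\begin{align*}\Bigl(\sum_{j\in\ZZ}b_j\,2^{j\beta}\Bigr)\Bigl(\int_0^\infty s^{-\beta p}E_pg(s)^p\frac{ds}{s}\Bigr)^{1/p},\end{align*}
whose last factor is comparable to the seminorm on the right-hand side by the equivalent description of the $B^\beta_{p,p}$ norm recalled in Section 2.

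The main obstacle is the convergence of the scale sum $\sum_j b_j\,2^{j\beta}$. Its two tails are $\sum_{j\leq0}2^{j(\alpha+\beta)}$ and $\sum_{j\geq1}2^{-j(1-\alpha-\beta)}$, which converge precisely when $\alpha+\beta>0$ and $\alpha+\beta<1$; thus the hypotheses $\alpha,\beta>0$ and $\alpha+\beta<1$ are exactly what the argument consumes, and it is here that the upper bound $\alpha+\beta<1$ is genuinely used (a naive single Hölder estimate, by contrast, produces a non-integrable $d^{-N}$ weight at the diagonal). I expect the careful bookkeeping of the annuli, the recentred $k_\alpha(y,z)$ terms, and the justification of interchanging the $y$-integration with the sum over scales to be the technical, though routine, heart of the write-up.
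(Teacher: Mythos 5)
Your proposal is correct, but it takes a genuinely different route from the paper's. The paper performs a single H\"older application on each of the two regions $B(x,2d(x,y))$ and its complement, introducing an interpolation parameter $\theta$ on the far region (pairing $|g(x)-g(z)|^p$ with $|k_\alpha(x,z)-k_\alpha(y,z)|^{\theta p}$ and putting $|k_\alpha(x,z)-k_\alpha(y,z)|^{(1-\theta)p'}$ into Lemma \ref{ineqk}); it then integrates in $y$ \emph{first}, reducing everything to two weighted kernel estimates of the form $\int \frac{|k_\alpha(x,z)-k_\alpha(y,z)|^{\theta p}}{d(x,y)^{\cdots}}dm(y)\leq C d(x,z)^{-(N+\beta p)}$, so that the right-hand side double integral appears directly, with no detour through the modulus of continuity. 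The hypothesis $\alpha+\beta<1$ enters there as solvability of $N+\beta p<\theta p(N-\alpha+1)<N+(1-\alpha)p$ in $\theta$ --- structurally the same role your geometric series $\sum_{j\geq 1}2^{-j(1-\alpha-\beta)}$ plays, and both devices circumvent the same obstruction you correctly identify (a global $L^{p'}$ bound on the kernel difference via Lemma \ref{ineqk} would force $p'(N-\alpha)<N$; the paper avoids it by using the kernel to the first power as a measure on the near region and the $\theta$-split on the far region, you by localizing to annuli where the kernel has constant size). Your route costs more bookkeeping (recentred annuli at $y$, the essential monotonicity $\Theta_\rho\lesssim\Theta_{2\rho}$ from doubling, and the reduction $\int_X F(d(x,y))\,dm(y)\lesssim\int_0^\infty F(t)\,t^N\frac{dt}{t}$, where only the upper bound holds and is all you need --- your ``$\sim$'' should be ``$\lesssim$''), and it passes through the identity $\|\Theta_\rho\|_p=E_pg(\rho)$ plus the equivalence, under Ahlfors regularity, of the $E_p$-form and double-integral-form seminorms (an exact Fubini computation, so this is harmless); what it buys is transparency about where $\alpha+\beta<1$ is consumed and an argument phrased in $E_p$ that adapts with little change to $J_\alpha:B^\beta_{p,q}\imply B^{\alpha+\beta}_{p,q}$ for general $q$, which the paper's $\theta$-trick, tied to the exponent $p$ on both sides, does not immediately give. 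I see no gap: your scale sums, the exponents $b_j\sim 2^{j\alpha}$ and $b_j\sim 2^{-j(1-\alpha)}$, the Minkowski step, and the symmetrization handling the $k_\alpha(y,z)$-centred terms are all sound.
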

\begin{proof}
Using $\int k_\alpha=1$, by Hölder's inequality we have
\begin{align*}|f(x)-f(y)|^p &\leq\\
&\hspace{-1cm}\leq C\left(\int_{B(x,2d(x,y))} |g(x)-g(z)|^p|k_\alpha(x,z)-k_\alpha(y,z)|dm(z)\right)\\
&\hspace{0cm}\times\left(\int_{B(x,2d(x,y))} |k_\alpha(x,z)-k_\alpha(y,z)|dm(z)\right)^{p/p'}\\
&\hspace{-0.5cm}+C\left(\int_{B(x,2d(x,y))^c} |g(x)-g(z)|^p|k_\alpha(x,z)-k_\alpha(y,z)|^{\theta p}dm(z)\right)\\
&\hspace{0cm}\times\left(\int_{B(x,2d(x,y))^c} |k_\alpha(x,z)-k_\alpha(y,z)|^{(1-\theta)p'}dm(z)\right)^{p/p'}.\end{align*}
By the previous lemma, if we find $0\leq \theta\leq 1$ such that $N<(1-\theta)p'(N-\alpha+1)$, we get
\begin{align*}|f(x)-f(y)|^p &\leq\\
&\hspace{-1cm}\leq C d(x,y)^{p\alpha-\alpha}\int_{B(x,2d(x,y))} |g(x)-g(z)|^p|k_\alpha(x,z)-k_\alpha(y,z)|dm(z)\\
&\hspace{-0.5cm}+C d(x,y)^{-N+p\alpha+\theta p(N-\alpha)}\\
&\hspace{0cm}\times\int_{B(x,2d(x,y))^c} |g(x)-g(z)|^p|k_\alpha(x,z)-k_\alpha(y,z)|^{\theta p}dm(z).\end{align*}
With this, to conclude the theorem it will be enough to prove
\begin{align*}\int_{d(x,z)<2d(x,y)} \frac{|k_\alpha(x,z)-k_\alpha(y,z)|}{d(x,y)^{N+\beta p+\alpha}}dm(y)\leq C \frac{1}{d(x,z)^{N+\beta p}}.\end{align*}
and for the other part
\begin{align*}\int_{d(x,z)\geq 2d(x,y)} \frac{|k_\alpha(x,z)-k_\alpha(y,z)|^{\theta p}}{d(x,y)^{2N+\beta p-\theta p(N-\alpha)}}dm(y)\leq C \frac{1}{d(x,z)^{N+\beta p}}.\end{align*}
\begin{itemize}
	\item For the first one, if $d(x,z)<2d(x,y)$ then $d(y,z)<3d(x,y)$ and by using the bound for $k_\alpha$,
	\begin{align*}\hspace{-1cm}\int_{d(x,z)<2d(x,y)} \frac{|k_\alpha(x,z)-k_\alpha(y,z)|}{d(x,y)^{N+\beta p+\alpha}}dm(y) &\leq\\
	&\hspace{-6cm}\leq C\int_{d(x,z)<2d(x,y)} \frac{1}{d(x,y)^{N+\beta p+\alpha}}\left(\frac{1}{d(x,z)^{N-\alpha}}+\frac{1}{d(y,z)^{N-\alpha}}\right)dm(y),\end{align*}
	then we consider two cases,
		\begin{itemize}
			\item if $d(y,z)<\frac{3}{2}d(x,z)<3d(x,y)$, then
			\begin{align*}\hspace{-2cm}\int_{d(y,z)<\frac{3}{2}d(x,z)<3d(x,y)}\frac{1}{d(x,y)^{N+\beta p+\alpha}}\left(\frac{1}{d(x,z)^{N-\alpha}}+\frac{1}{d(y,z)^{N-\alpha}}\right)dm(y) &\leq\\
			&\hspace{-8cm}\leq C\frac{1}{d(x,z)^{N+\beta p+\alpha}} \int_{d(y,z)<\frac{3}{2}d(x,z)}\frac{1}{d(y,z)^{N-\alpha}}dm(y)\\
			&\hspace{-8cm}\leq C\frac{1}{d(x,z)^{N+\beta p}};\end{align*}
			
			\item if $\frac{3}{2}d(x,z)\leq d(y,z)<3d(x,y)$,
			\begin{align*}\hspace{-2cm}\int_{\frac{3}{2}d(x,z)\leq d(y,z)<3d(x,y)}\frac{1}{d(x,y)^{N+\beta p+\alpha}}\left(\frac{1}{d(x,z)^{N-\alpha}}+\frac{1}{d(y,z)^{N-\alpha}}\right)dm(y) &\leq\\
			&\hspace{-8cm}\leq C\frac{1}{d(x,z)^{N-\alpha}}\int_{d(x,y)>d(x,z)/2}\frac{1}{d(x,y)^{N+\beta p+\alpha}}dm(y)\\
			&\hspace{-8cm}\leq C\frac{1}{d(x,z)^{N+\beta p}}.\end{align*}
		\end{itemize}
	
	\item For the second one, if $d(x,z)\geq 2d(x,y)$, then $d(x,z)\sim d(y,z)$ and by property 5 in \ref{propk},
	\begin{align*}\int_{d(x,z)\geq2d(x,y)} \frac{|k_\alpha(x,z)-k_\alpha(y,z)|^{\theta p}}{d(x,y)^{2N+\beta p-\theta p(N-\alpha)}}dm(y) &\leq\\
	&\hspace{-6cm}\leq C\frac{1}{d(x,z)^{\theta p(N-\alpha+1)}}\int_{d(x,z)\geq2d(x,y)} \frac{d(x,y)^{\theta p}}{d(x,y)^{2N+\beta p-\theta p(N-\alpha)}}dm(y)\\
	&\hspace{-6cm}\leq C\frac{1}{d(x,z)^{N+\beta p}}\end{align*}
	as long as $N+\beta p<\theta p(N-\alpha+1)$.
\end{itemize}

Finally, both conditions over $\theta$ can be rewritten as
\begin{align*}N+\beta p<\theta p(N-\alpha+1)<N+(1-\alpha)p,\end{align*}
and there is always a value for $\theta$ satisfying them, for $\beta<1-\alpha$.
\end{proof}

We have now the following result regarding Sobolev regularity.

\begin{propo}Let $f,g$ satisfy, for a.e. $x,y$,
\begin{align*}|f(x)-f(y)|\leq d(x,y)^\beta(g(x)+g(y)),\end{align*}
with $g\geq0$, $\beta>0$. Then for $\alpha>0$ and $\alpha+\beta<1$,
\begin{align*}|J_\alpha f(x)-J_\alpha f(y)|\leq Cd(x,y)^{\alpha+\beta}(Mg(x)+Mg(y)).\end{align*}
In particular, if $p>1$,
\begin{align*}J_\alpha: M^{\beta,p}\imply M^{\alpha+\beta,p}.\end{align*}
\end{propo}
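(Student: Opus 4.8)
The plan is to mimic the structure of the preceding Lipschitz and Besov propositions, splitting the integral defining $J_\alpha f(x)-J_\alpha f(y)$ according to a near/far decomposition relative to the ball $B(x,2d)$ with $d=d(x,y)$, and to bound each piece by the maximal function of $g$ rather than by a Lipschitz seminorm. First I would use $\int_X k_\alpha(x,z)\,dm(z)=1$ (property 7 of Lemma \ref{propk}) to write
\begin{align*}J_\alpha f(x)-J_\alpha f(y)=\int_X (f(z)-f(x))\bigl(k_\alpha(x,z)-k_\alpha(y,z)\bigr)\,dm(z),\end{align*}
and then substitute the pointwise Haj\l asz gradient hypothesis $|f(z)-f(x)|\leq d(x,z)^\beta(g(x)+g(z))$ into the absolute value. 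This reduces everything to estimating integrals of $d(x,z)^\beta(g(x)+g(z))$ against the kernel difference.

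Next I would carry out the same three-region split as in the Lipschitz proposition: a near part over $B(x,2d)$ where one bounds $|k_\alpha(x,z)-k_\alpha(y,z)|$ crudely by $C\,d(x,z)^{-(N-\alpha)}+C\,d(y,z)^{-(N-\alpha)}$ using property 3, a companion near part over $B(y,3d)$, and a far part over $X\setminus B(x,2d)$ where one uses the smoothness estimate $|k_\alpha(x,z)-k_\alpha(y,z)|\leq C\,d\,(d(x,z)\wedge d(y,z))^{-(N+1-\alpha)}$ from property 5 together with $d(x,z)\sim d(y,z)$. The key new ingredient compared to the Lipschitz case is that, in each region, the integral $\int d(x,z)^{\beta+s}g(z)\,dm(z)$ over a ball (or over the complement of a ball) is controlled by $C\,r^{\beta+s+N}Mg(x)$ via the two distance-integrability estimates listed in subsection 2.1 (the ones involving $Mf$). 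In the near regions the exponent $s=\alpha-N$ gives $\beta+\alpha-N+N=\alpha+\beta$ on the scale $r\sim d$, yielding the factor $d^{\alpha+\beta}$, and the $g(x)$ terms produce $d^{\alpha+\beta}g(x)\leq d^{\alpha+\beta}Mg(x)$ directly since $g(x)\leq Mg(x)$ a.e.; in the far region the condition $\alpha+\beta<1$ guarantees the relevant exponent makes the tail integral converge and again scale like $d^{\alpha+\beta}Mg(x)$. Assembling the three pieces gives the claimed inequality with $Mg(x)+Mg(y)$ on the right, the symmetric $Mg(y)$ contributions arising from the $B(y,3d)$ region and from swapping roles in the maximal estimates.

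The final step is to deduce $J_\alpha:M^{\beta,p}\to M^{\alpha+\beta,p}$: having shown that $Mg(x)+Mg(y)$ times $d(x,y)^{\alpha+\beta}$ controls $|J_\alpha f(x)-J_\alpha f(y)|$, the function $Mg$ is a $(\alpha+\beta)$-Haj\l asz gradient of $J_\alpha f$, and for $p>1$ the Hardy--Littlewood maximal operator is bounded on $L^p$, so $\|Mg\|_p\leq C\|g\|_p$; combined with the $L^p$-boundedness $\|J_\alpha f\|_p\leq\|f\|_p$ already noted, this gives the norm estimate on $M^{\alpha+\beta,p}$. The main obstacle I anticipate is bookkeeping in the near-ball estimates: one must be careful that after pulling out $g(x)$ or replacing $g(z)$ by its maximal average, the residual distance integral genuinely scales as $d^{\alpha+\beta}$ and that the case analysis $d(y,z)<\tfrac32 d(x,z)$ versus $d(y,z)\geq\tfrac32 d(x,z)$ (as in the Besov proof) is handled so that both the $d(x,z)^{-(N-\alpha)}$ and $d(y,z)^{-(N-\alpha)}$ singularities are integrable against $g$; this is where the restriction $p>1$ (needed for the maximal function) and the sign conditions $\alpha,\beta>0$, $\alpha+\beta<1$ all get used.
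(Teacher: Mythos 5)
Your proposal follows essentially the same route as the paper: the paper likewise inserts $f(x)$ via $\int_X k_\alpha(x,z)\,dm(z)=1$, substitutes the pointwise gradient hypothesis, splits into $B(x,2d(x,y))$ and its complement using properties 3 and 5 of Lemma \ref{propk} (with $d(x,z)\sim d(y,z)$ in the far region), and bounds the resulting integrals by $d(x,y)^{\alpha+\beta}(Mg(x)+Mg(y))$ through the maximal-function estimates of Section 2.1, with the $Mg(y)$ term coming from the $d(y,z)^{-(N-\alpha)}$ singularity over the enlarged ball. Your concluding step (that $Mg$ is then an $(\alpha+\beta)$-gradient of $J_\alpha f$, combined with $\|J_\alpha f\|_p\leq\|f\|_p$ and $L^p$-boundedness of $M$ for $p>1$) is exactly the paper's implicit deduction of the embedding, so the proof is correct as proposed.
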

\begin{proof}Once again, using $\int k_\alpha=1$, and proceeding as in the Lipschitz case,
\begin{align*}|J_\alpha f(x)-J_\alpha f(y)| &\leq \int_X |f(x)-f(z)||k_\alpha(x,z)-k_\alpha(y,z)|dm(z)\\
&\hspace{-2.75cm}\leq C\int_{B(x,2d(x,y))}d(x,z)^\beta(g(x)+g(z))\left(\frac{1}{d(x,z)^{N-\alpha}}+\frac{1}{d(y,z)^{N-\alpha}}\right)dm(z)\\
&\hspace{-2.25cm}+C\int_{B(x,2d(x,y))^c}d(x,z)^\beta(g(x)+g(z))\frac{d(x,y)}{d(x,z)^{N-\alpha+1}}dm(z)\\
&\hspace{-2.75cm}\leq Cg(x)d(x,y)^{\alpha+\beta}+Cd(x,y)^{\alpha+\beta}Mg(x)\\
&\hspace{-2.25cm} +Cd(x,y)^\beta g(x)d(x,y)^\alpha+Cd(x,y)^\beta Mg(y)d(x,y)^\alpha\\
&\hspace{-2.25cm} +Cd(x,y)g(x)\frac{1}{d(x,y)^{1-(\alpha+\beta)}}+Cd(x,y)\frac{1}{d(x,y)^{1-(\alpha+\beta)}}Mg(x)\\
&\hspace{-2.75cm}\leq Cd(x,y)^{\alpha+\beta}(Mg(x)+Mg(y)).\end{align*}
\end{proof}

\section{Potential spaces $L^{\alpha,p}$}

In this section we define potential spaces $L^{\alpha,p}$ and see they are Banach spaces. We prove they are embedded in certain Sobolev and Besov spaces, and that Lipschitz functions are dense. We finish the section with Sobolev embedding theorems for $L^{\alpha,p}$.

For $\alpha>0$, we define the \textbf{potential space}
\begin{align*}L^{\alpha,p}(X)=\{f\in L^p:\exists g\in L^p, f=J_\alpha g\}=J_\alpha(L^p)\end{align*}
and equip it with the following norm
\begin{align*}\|f\|_{\alpha,p}=\|f\|_p+\inf_{g\in J_\alpha^{-1}(\{f\})}\|g\|_p.\end{align*}

\begin{propo}$L^{\alpha,p}$ is Banach.\end{propo}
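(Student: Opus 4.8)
The plan is to show that $L^{\alpha,p}$ is complete by reducing completeness of $L^{\alpha,p}$ to that of $L^p$, which we already know is Banach. The essential point is that the quotient-type norm $\|f\|_{\alpha,p}=\|f\|_p+\inf_{g\in J_\alpha^{-1}(\{f\})}\|g\|_p$ measures both $f$ and the ``best'' preimage $g$ under $J_\alpha$, so a Cauchy sequence in $L^{\alpha,p}$ should have both its terms and a suitable choice of preimages converging in $L^p$.

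First I would verify that $\|\cdot\|_{\alpha,p}$ is genuinely a norm (homogeneity and the triangle inequality are routine, using linearity of $J_\alpha$; definiteness follows because $\|f\|_{\alpha,p}\geq\|f\|_p$). Then, given a Cauchy sequence $(f_n)$ in $L^{\alpha,p}$, I would pass to a subsequence along which $\|f_{n+1}-f_n\|_{\alpha,p}\leq 2^{-n}$. For each difference $f_{n+1}-f_n$ I would choose, by definition of the infimum, a preimage $g_n\in L^p$ with $f_{n+1}-f_n=J_\alpha g_n$ and $\|g_n\|_p\leq \|f_{n+1}-f_n\|_{\alpha,p}+2^{-n}\leq 2^{-n+1}$. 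The plan is to set $g=\sum_n g_n$, which converges absolutely in $L^p$ since $\sum_n\|g_n\|_p<\infty$ and $L^p$ is complete. Simultaneously $(f_n)$ is Cauchy in $L^p$ (again because $\|\cdot\|_p\leq\|\cdot\|_{\alpha,p}$), so $f_n\to f$ in $L^p$ for some $f\in L^p$.

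The remaining step is to identify the limit: I would show $f-f_1=J_\alpha g$, so that $f\in J_\alpha(L^p)=L^{\alpha,p}$, and then show convergence in the $L^{\alpha,p}$-norm. Writing $f_{n+1}-f_1=\sum_{k=1}^{n}J_\alpha g_k=J_\alpha\!\left(\sum_{k=1}^{n}g_k\right)$ and using the boundedness $\|J_\alpha h\|_p\leq\|h\|_p$ (established right after Lemma \ref{propk}), continuity of $J_\alpha$ on $L^p$ lets me pass to the limit and conclude $f-f_1=J_\alpha g$. To control the $L^{\alpha,p}$-norm of the tail $f-f_{N}$, I would use that $f-f_N=J_\alpha\!\left(\sum_{k\geq N}g_k\right)$ exhibits an explicit preimage, so $\|f-f_N\|_{\alpha,p}\leq \|f-f_N\|_p+\sum_{k\geq N}\|g_k\|_p\to 0$.

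The main obstacle, and the place requiring the most care, is the handling of the infimum in the norm: because $J_\alpha$ need not be injective, the ``preimage'' is only defined up to its kernel, and one must choose the $g_n$ carefully (nearly attaining the infimum, not exactly) and verify that summing these choices yields a legitimate preimage of the limit. The fact that the full Cauchy sequence converges, and not merely the extracted subsequence, then follows from the standard observation that a Cauchy sequence with a convergent subsequence converges to the same limit. No delicate estimate on the kernel $k_\alpha$ is needed here; the argument rests entirely on completeness of $L^p$ and the continuity $J_\alpha\colon L^p\imply L^p$.
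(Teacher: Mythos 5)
Your proof is correct and follows essentially the same route as the paper: the paper verifies completeness via the absolutely-convergent-series criterion, choosing near-optimal preimages with $\|g_n\|_p\leq\|f_n\|_{\alpha,p}+2^{-n}$, summing in $L^p$, and using continuity of $J_\alpha$ on $L^p$ to identify the limit as $J_\alpha g$ --- exactly the mechanism in your telescoped Cauchy-subsequence argument. The only difference is the packaging (series criterion versus extracting a rapidly Cauchy subsequence), which is mathematically the same argument.
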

\begin{proof} To prove completeness, we will show the convergence of every absolutely convergent series. Let $(f_n)$ be a sequence in $L^{\alpha,p}$ such that 
\begin{align*}\sum_n\|f_n\|_{\alpha,p}<\infty.\end{align*}
In particular, $\sum_n\|f_n\|_p<\infty$, so the series $\sum_n f_n$ converges in $L^p$ to some function $f$. For each $n$, take $g_n$ in $L^p$ with  $f_n=J_\alpha g_n$ and
\begin{align*}\|g_n\|_p\leq \|f_n\|_{\alpha,p}+2^{-n},\end{align*}
then clearly $\sum_n \|g_n\|_p<\infty$ and $\sum_n g_n$ converges to some $g\in L^p$. Finally, as $J_\alpha$ is continuous in $L^p$,
\begin{align*}f=\sum_n f_n=\sum_n J_\alpha g_n=J_\alpha\left(\sum_n g_n\right)=J_\alpha g\end{align*}
so $f\in L^{\alpha,p}$, and
\begin{align*}\left\|f-\sum_{k=1}^nf_k\right\|_{\alpha,p}\leq \left\|f-\sum_{k=1}^nf_k\right\|_p+\left\|g-\sum_{k=1}^ng_k\right\|_p\imply 0.\end{align*}
\end{proof}

\begin{obs}\label{linf}$\|J_\alpha g\|_{\alpha,p}\leq 2\|g\|_p$, so it is continuous from $L^p$ onto $L^{\alpha,p}$. In particular, as $L^\infty\cap L^p$ is dense in $L^p$ for $1\leq p\leq\infty$, we get that $J_\alpha(L^\infty\cap L^p)$ is dense in $L^{\alpha,p}$.\end{obs}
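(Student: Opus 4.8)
The plan is to derive the whole statement from the contraction bound $\|J_\alpha g\|_p \leq \|g\|_p$ already recorded after the definition of $J_\alpha$ (property $7$ of Lemma \ref{propk}), combined with the two-part structure of the norm $\|\cdot\|_{\alpha,p}$. First I would establish the quantitative inequality $\|J_\alpha g\|_{\alpha,p}\leq 2\|g\|_p$. Unwinding the definition gives $\|J_\alpha g\|_{\alpha,p}=\|J_\alpha g\|_p+\inf_{h\in J_\alpha^{-1}(\{J_\alpha g\})}\|h\|_p$. The first summand is at most $\|g\|_p$ directly by the $L^p$ contraction. For the second summand the key (tautological) observation is that $g$ itself lies in the preimage set $J_\alpha^{-1}(\{J_\alpha g\})$, so the infimum over that set is at most $\|g\|_p$. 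Adding the two estimates produces the factor of $2$.

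With this inequality in hand, continuity of $J_\alpha\colon L^p\imply L^{\alpha,p}$ is immediate: by linearity of $J_\alpha$ the bound $\|J_\alpha g\|_{\alpha,p}\leq 2\|g\|_p$ says precisely that $J_\alpha$ is bounded with operator norm at most $2$. Surjectivity onto $L^{\alpha,p}$ requires no argument at all, since $L^{\alpha,p}=J_\alpha(L^p)$ is the definition of the target space, so every element is of the form $J_\alpha g$ for some $g\in L^p$.

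For the density assertion I would transport density along $J_\alpha$. Fix $f\in L^{\alpha,p}$ and write $f=J_\alpha g$ with $g\in L^p$. Using that $L^\infty\cap L^p$ is dense in $L^p$, choose $g_n\in L^\infty\cap L^p$ with $\|g_n-g\|_p\to 0$. Each $J_\alpha g_n$ lies in $J_\alpha(L^\infty\cap L^p)$, and applying the first-step inequality to the difference $g_n-g$ yields $\|J_\alpha g_n-f\|_{\alpha,p}=\|J_\alpha(g_n-g)\|_{\alpha,p}\leq 2\|g_n-g\|_p\to 0$. Hence $J_\alpha g_n\to f$ in $L^{\alpha,p}$, which is exactly the claimed density.

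I do not anticipate any genuine obstacle: the argument is pure bookkeeping with the norm definition and the already-proved $L^p$ contraction. The only point needing (trivial) care is recognizing that $g$ belongs to $J_\alpha^{-1}(\{J_\alpha g\})$, which holds by definition and is what makes the infimum controllable; everything else is a formal consequence of linearity and boundedness.
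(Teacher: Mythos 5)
Your proposal is correct and is precisely the argument the paper intends: the remark is stated without a written proof because it follows, exactly as you say, from the contraction $\|J_\alpha g\|_p\leq\|g\|_p$ (property 7 of Lemma \ref{propk}), the tautological membership $g\in J_\alpha^{-1}(\{J_\alpha g\})$ in the infimum defining $\|\cdot\|_{\alpha,p}$, and transporting density along the bounded linear surjection $J_\alpha\colon L^p\imply L^{\alpha,p}$. No gaps; this matches the paper's (implicit) reasoning in full.
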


The following theorem shows that `potential functions' have Haj\symbol{170}asz gradients, and this leads to some interesting results, such as Lipschitz density and embeddings in Sobolev spaces.

\begin{thm}\label{eleeme}Let $f=J_\alpha g$ for some $g$ such that $f$ is finite $a.e.$. Then if $0<\alpha<1$,
\begin{align*}|f(x)-f(y)|\leq C_\alpha d(x,y)^\alpha (Mg(x)+Mg(y))\end{align*}
for every $x,y$ outside a set of measure zero. If $\alpha\geq1$, then for each $\beta<1$ we get
\begin{align*}|f(x)-f(y)|\leq C_{\alpha,\beta}d(x,y)^\alpha (Mg(x)+Mg(y))\end{align*}
for every $x,y$ outside a set of measure zero.
\end{thm}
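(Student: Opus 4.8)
The plan is to estimate the increment $f(x)-f(y)$ directly against the kernel, using the Hardy--Littlewood maximal function of $g$ in place of the pointwise regularity that $g$ now lacks. Since $\int_X k_\alpha(x,z)\,dm(z)=1$ by property 7 of Lemma \ref{propk}, I would start from
\begin{align*}f(x)-f(y)=\int_X g(z)(k_\alpha(x,z)-k_\alpha(y,z))\,dm(z),\end{align*}
the manipulation being legitimate for a.e.\ pair $(x,y)$, namely those at which $Mg$ is finite; this is a.e.\ when $g\in L^p$ with $p>1$, since then $\int_X|g|\,k_\alpha(x,\cdot)\le C\,Mg(x)<\infty$ by the size estimates of Lemma \ref{propk}, and the remaining null set is absorbed into the statement. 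Writing $d=d(x,y)$, I would split the integral over $B(x,2d)$ and its complement, exactly as in the Lipschitz and Sobolev propositions, but now carrying $|g|$ rather than an increment of $g$.

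On the inner ball $B(x,2d)$ I would discard the difference and bound each term by its size estimate (property 3), $|k_\alpha(x,z)-k_\alpha(y,z)|\le k_\alpha(x,z)+k_\alpha(y,z)\le C(d(x,z)^{-(N-\alpha)}+d(y,z)^{-(N-\alpha)})$, and then invoke the inner distance--integrability estimate recalled in Section~2 with exponent $s=\alpha-N>-N$. Since $B(x,2d)\subset B(y,3d)$, this produces $C\,d^{\alpha}(Mg(x)+Mg(y))$, a contribution that always carries the clean order $\alpha$.

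The decisive term is the outer one. For $z\notin B(x,2d)$ one has $d(x,z)\sim d(y,z)$, so the smoothness estimate (property 5) gives $|k_\alpha(x,z)-k_\alpha(y,z)|\le C\,d\,d(x,z)^{-(N+1-\alpha)}$, and the outer distance--integrability estimate applies precisely when the exponent $-(N+1-\alpha)$ lies below $-N$, i.e.\ when $\alpha<1$; this yields $C\,d\cdot d^{\alpha-1}Mg(x)=C\,d^{\alpha}Mg(x)$ and finishes the case $0<\alpha<1$. The main obstacle is exactly that for $\alpha\ge1$ this outer integral no longer converges: the kernel difference carries only first--order (Lipschitz) smoothness, so no decomposition can push the smoothing order of $J_\alpha$ beyond the range governed by that single power.

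To treat $\alpha\ge1$ I would cut the complement of $B(x,2d)$ at radius $4$, using property 5 on the annulus $2d\le d(x,z)\le4$ and the fast far--field decay (properties 4 and 6) on $d(x,z)>4$, where convergence is never in question. On the annulus the exponent $-(N+1-\alpha)$ now sits in $[-N,\infty)$, so an inner--type integrability estimate over $B(x,4)$ applies, producing a factor $d$ when $\alpha>1$ and a logarithmic factor $d\log(1/d)$ in the critical case $\alpha=1$; absorbing this logarithm is precisely what forces the loss of a power and the passage to an arbitrary order $\beta<1$, giving $C_{\alpha,\beta}\,d^{\beta}(Mg(x)+Mg(y))$ for $d$ small. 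Finally, for far apart points ($d\ge1$) I would not use the increment at all but the crude bound $|f(x)|\le C\,Mg(x)$, obtained by applying the size estimates to $J_\alpha g$, so that $|f(x)-f(y)|\le C(Mg(x)+Mg(y))\le C\,d^{\beta}(Mg(x)+Mg(y))$ since $d^{\beta}\ge1$; combining the two ranges of $d$ yields the estimate for every $\beta<1$. The step deserving genuine care is thus the critical exponent $\alpha=1$, where the logarithmic divergence on the intermediate annulus is the real source of the restriction $\beta<1$.
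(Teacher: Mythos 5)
Your argument is correct and follows the paper's proof almost step for step: the same split at $B(x,2d)$, the same inner size estimate giving $Cd^\alpha(Mg(x)+Mg(y))$, the same outer smoothness estimate via property 5 of Lemma \ref{propk} for $\alpha<1$, the same cut of the far field with property 6, and the same crude bound $|f|\le CMg$ for $d\ge1$ (and your estimate $d^\beta$ for $\alpha\ge1$ is indeed what the paper proves; the exponent $\alpha$ in the second display of the theorem statement is a typo). The one genuine divergence is the intermediate annulus when $\alpha\ge1$: the paper uses the trade $d\le d^\beta d(x,z)^{1-\beta}$ (valid since $d\le d(x,z)$ there and $\beta<1$) to bound $d\,d(x,z)^{-(N+1-\alpha)}\le d^\beta d(x,z)^{-(N-(\alpha-\beta))}$, whose exponent lies in the inner-integrable range $>-N$ uniformly in $\alpha\ge1$, so no logarithm ever appears and the cases $\alpha=1$ and $\alpha>1$ are treated identically; you instead integrate directly, getting a clean factor $d$ for $\alpha>1$ and $d\log(1/d)$ at the critical $\alpha=1$, then absorb the logarithm via $d\log(1/d)\le C_\beta d^\beta$. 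Both work, and your version has the side benefit of showing the loss $\beta<1$ is really only forced at $\alpha=1$. One small slip to repair: you cut the far region at radius $4$, but property 6 requires \emph{both} $d(x,z)\ge4$ and $d(y,z)\ge4$, and on the thin shell $4<d(x,z)<4+d$ the second condition can fail; the paper cuts at $5$ precisely so that $d(y,z)\ge d(x,z)-d(x,y)\ge4$ when $d<1$ (alternatively, your property-5 annulus estimate covers that shell at no cost, since $d(x,z)\sim4$ there).
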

\begin{proof}Assume first $\alpha<1$. Let $d=d(x,y)$,
\begin{align*}|f(x)-f(y)| &\leq \int_X |g(z)||k_\alpha(x,z)-k_\alpha(y,z)|dm(z)\\
&\leq \int_{B(x,2d)}+\int_{X\menos B(x,2d)}=I+II.\end{align*}
In $I$ we have
\begin{align*}I &\leq C\int_{B(x,2d)}|g(z)|\frac{1}{d(x,z)^{N-\alpha}}dm(z)+C\int_{B(y,3d)}|g(z)|\frac{1}{d(y,z)^{N-\alpha}}dm(z)\\
&\leq Cd^\alpha (Mg(x)+Mg(y)),\end{align*}
and for $II$, as $d(x,z)\sim d(y,z)$ we get
\begin{align*}II &\leq Cd\int_{B(x,2d)^c}|g(z)|d(x,z)^{-(N+1-\alpha)}dm(z)\\
&\leq Cdd^{-(1-\alpha)}Mg(x)=Cd^\alpha Mg(x).\end{align*}
Let now $\alpha\geq 1$ and fix $0<\beta<1$. Observe that the bound for $I$ also holds in this case, and for $d(x,y)<1$ we get
\begin{align*}I\leq Cd^\alpha (Mg(x)+Mg(y))\leq Cd^\beta (Mg(x)+Mg(y)).\end{align*}
We now divide $X\menos B(x,2d)$ in two regions (and use in both cases the fact that $d(x,z)\sim d(y,z)$)
\begin{align*}\int_{2d\leq d(x,z)<5}|g(z)|\frac{d}{d(x,z)^{N-\alpha+1}}dm(z) &\leq\\
&\hspace{-1cm}\leq \int_{2d\leq d(x,z)<5}|g(z)|\frac{d^\beta}{d(x,z)^{N-(\alpha-\beta)}}dm(z)\\
&\hspace{-1cm}\leq Cd^\beta Mg(x);\end{align*}
and if $d(x,z)\geq 5$, as $d(y,z)\geq 4$ we can use the other bound for differences of $k_\alpha$ (property 6 in \ref{propk})
\begin{align*}\int_{d(x,z)\geq 5}|g(z)|\frac{d}{d(x,z)^{N+\alpha+1}}dm(z)\leq CdMg(x)\leq Cd^\beta Mg(x).\end{align*}
Finally, if $d(x,y)\geq 1$, as $|f|\leq Mg$,
\begin{align*}|f(x)-f(y)|\leq C(Mg(x)+Mg(y))\leq Cd(x,y)^\beta (Mg(x)+Mg(y)).\end{align*}
\end{proof}

\begin{coro}Let $1<p<\infty$. If $0<\alpha<1$, then $L^{\alpha,p}\embeds M^{\alpha,p}$. For $\alpha\geq 1$, $L^{\alpha,p}\embeds M^{\beta,p}$ for all $0<\beta<1$.\end{coro}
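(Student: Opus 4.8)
The corollary is an immediate consequence of Theorem~\ref{eleeme} together with the definition of the Haj\'asz--Sobolev space $M^{\beta,p}$ and the $L^p$-boundedness of the Hardy--Littlewood maximal function. The plan is to show that if $f=J_\alpha g\in L^{\alpha,p}$, then a suitable multiple of $Mg$ serves as a Haj\'asz gradient of the appropriate order, and then estimate its $L^p$ norm.

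\textbf{The case $0<\alpha<1$.} Let $f\in L^{\alpha,p}$, so $f=J_\alpha g$ for some $g\in L^p$, and first I would note that since $p>1$, the function $f$ is finite almost everywhere (indeed $|f|\leq I_\alpha|g|\leq CMg\in L^p$ via the pointwise bound from Theorem~\ref{eleeme} applied with a fixed reference, or more directly because $\|f\|_p\leq\|g\|_p$). Theorem~\ref{eleeme} then gives, outside a set of measure zero,
\begin{align*}|f(x)-f(y)|\leq C_\alpha d(x,y)^\alpha(Mg(x)+Mg(y)),\end{align*}
which is precisely the statement that $g_0:=C_\alpha Mg$ is an $\alpha$-Haj\'asz gradient of $f$. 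Since $1<p<\infty$, the maximal operator is bounded on $L^p$, so $\|g_0\|_p\leq C_\alpha\|Mg\|_p\leq C\|g\|_p$. Taking the infimum over all representations $f=J_\alpha g$ and adding $\|f\|_p\leq\|g\|_p$, I obtain
\begin{align*}\|f\|_{M^{\alpha,p}}=\|f\|_p+\inf\|g_0\|_p\leq C\,\|f\|_{\alpha,p},\end{align*}
which establishes both the inclusion $L^{\alpha,p}\subset M^{\alpha,p}$ and the continuity of the embedding.

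\textbf{The case $\alpha\geq1$.} Here I would fix any $0<\beta<1$ and repeat the argument verbatim, now invoking the second half of Theorem~\ref{eleeme}, which furnishes the bound
\begin{align*}|f(x)-f(y)|\leq C_{\alpha,\beta}d(x,y)^\beta(Mg(x)+Mg(y))\end{align*}
almost everywhere. This shows $C_{\alpha,\beta}Mg$ is a $\beta$-Haj\'asz gradient of $f$, and the same maximal-function estimate yields $L^{\alpha,p}\embeds M^{\beta,p}$ with norm bound depending on $\alpha$ and $\beta$.

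\textbf{Main obstacle.} There is no serious obstacle: the corollary is essentially a restatement of Theorem~\ref{eleeme} in functional-analytic language. The only point requiring a little care is the passage from the pointwise gradient inequality to the norm inequality, namely ensuring that the constant produced is independent of the particular representative $g$ so that one may legitimately take the infimum defining $\|f\|_{\alpha,p}$; this is automatic because the Haj\'asz-gradient bound is linear in the chosen $g$ and the maximal-function constant does not depend on $g$. The hypothesis $p>1$ is essential precisely because it is what makes $M$ bounded on $L^p$.
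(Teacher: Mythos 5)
Your argument is correct and coincides with the paper's intended (unwritten) proof of this corollary: Theorem \ref{eleeme} says precisely that $C\,Mg$ is an $\alpha$-Haj\l{}asz gradient (resp.\ $\beta$-Haj\l{}asz gradient when $\alpha\geq 1$) of $f=J_\alpha g$, and combining this with the $L^p$-boundedness of the Hardy--Littlewood maximal operator for $1<p<\infty$, the bound $\|f\|_p\leq\|g\|_p$, and the infimum over representations gives $\|f\|_{M^{\alpha,p}}\leq C\|f\|_{\alpha,p}$ exactly as you write. The only blemish is the parenthetical chain $|f|\leq I_\alpha|g|\leq C\,Mg$, whose second inequality is false in general (the Riesz potential is not pointwise dominated by the maximal function; it is the extra decay of $k_\alpha$ at infinity that yields $|J_\alpha g|\leq C\,Mg$), but your alternative justification that $f$ is finite a.e.\ because $\|J_\alpha g\|_p\leq\|g\|_p$ is correct and suffices.
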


\begin{coro}\label{eleeme2}Let $p=\infty$. If $0<\alpha<1$, then $L^{\alpha,\infty}\embeds C^\alpha$. For $\alpha\geq 1$, $L^{\alpha,\infty}\embeds C^\beta$ for all $0<\beta<1$. In particular, functions in $L^{\alpha,\infty}$ are continuous for all $\alpha>0$ (after eventual modification on a null set).\end{coro}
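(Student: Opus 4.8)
The plan is to deduce the statement directly from Theorem \ref{eleeme}, using nothing more than the trivial behaviour of the maximal function on $L^\infty$. First I would take $f \in L^{\alpha,\infty}$, so that $f = J_\alpha g$ for some $g \in L^\infty$ with $f \in L^\infty$ (hence finite a.e., as required to invoke Theorem \ref{eleeme}). Applying that theorem produces, in the case $0<\alpha<1$, a null set $E\subset X$ with
\begin{align*}|f(x)-f(y)|\leq C_\alpha d(x,y)^\alpha\,(Mg(x)+Mg(y))\qquad (x,y\notin E),\end{align*}
and in the case $\alpha\geq 1$ the analogous estimate with the exponent $\beta$ (for any fixed $0<\beta<1$) in place of $\alpha$ on the right, again off a null set.

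The key observation is that for $g\in L^\infty$ every average of $|g|$ is at most $\|g\|_\infty$, so $Mg(x)\leq\|g\|_\infty$ for every $x$. Substituting this turns the pointwise gradient bound into a uniform Hölder estimate: for $0<\alpha<1$,
\begin{align*}|f(x)-f(y)|\leq 2C_\alpha\|g\|_\infty\,d(x,y)^\alpha\qquad(x,y\notin E),\end{align*}
and for $\alpha\geq 1$, $|f(x)-f(y)|\leq 2C_{\alpha,\beta}\|g\|_\infty\,d(x,y)^\beta$ off the corresponding null set. Since $f\in L^\infty$ is bounded, this exhibits $f$ as agreeing a.e. with a bounded Lipschitz-$\alpha$ (resp. Lipschitz-$\beta$) function, which is exactly membership in $C^\alpha$ (resp. $C^\beta$), recalling from the preliminaries that $M^{\beta,\infty}=C^\beta$.

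The only point requiring care — and what I would regard as the main, if modest, obstacle — is passing from an estimate valid merely off a null set to a genuine continuous representative. Here I would use that, because $E$ has measure zero and every ball has positive measure in an Ahlfors $N$-regular space, the complement $X\menos E$ meets every ball and is therefore dense. On $X\menos E$ the function $f$ is uniformly continuous (being Hölder with a uniform constant), so by the standard extension of a uniformly continuous map from a dense set into the complete space $\RR$ it extends uniquely to a continuous $\tilde f$ on all of $X$; since $\tilde f=f$ on the full-measure set $X\menos E$, we have $\tilde f=f$ a.e., and the Hölder bound persists for $\tilde f$ everywhere by continuity. This $\tilde f$ is the desired representative, yielding $L^{\alpha,\infty}\embeds C^\alpha$ for $0<\alpha<1$ and $L^{\alpha,\infty}\embeds C^\beta$ for every $0<\beta<1$ when $\alpha\geq1$. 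In particular, for each $\alpha>0$ the function $f$ has a continuous (indeed Hölder) representative, which establishes the final assertion.
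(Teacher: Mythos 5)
Your proof is correct and takes essentially the same route the paper intends: the corollary is stated there without a separate proof, being an immediate consequence of Theorem \ref{eleeme} together with the bound $Mg\leq\|g\|_\infty$ for $g\in L^\infty$ and the identification $M^{\beta,\infty}=C^\beta$ recalled in the preliminaries. Your explicit construction of the continuous representative (density of the complement of the null set in the Ahlfors space, plus the uniformly continuous extension) merely fills in, correctly, the standard detail behind the paper's phrase ``after eventual modification on a null set.''
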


From this last result and remark \ref{linf}, we get the following density property.

\begin{coro}Let $1\leq p\leq\infty$ and $\alpha>0$. Then $C^\beta\cap L^{\alpha,p}$ is dense in $L^{\alpha,p}$ for all $0<\beta\leq \alpha$ if $\alpha<1$, and for all $0<\beta<1$ if $\alpha\geq1$.\end{coro}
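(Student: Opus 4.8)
The plan is to combine the density of $J_\alpha(L^\infty\cap L^p)$ in $L^{\alpha,p}$ (Remark \ref{linf}) with the fact that $J_\alpha$ sends bounded data into the relevant Lipschitz class via the pointwise gradient-type inequality of Theorem \ref{eleeme}. So the proof is essentially an assembly of results already established.

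First I would fix $f\in L^{\alpha,p}$ and $\epsilon>0$. By Remark \ref{linf}, since $J_\alpha(L^\infty\cap L^p)$ is dense in $L^{\alpha,p}$, I can choose $g\in L^\infty\cap L^p$ with $\|f-J_\alpha g\|_{\alpha,p}<\epsilon$. It then remains to show that $h:=J_\alpha g$ lies in $C^\beta\cap L^{\alpha,p}$: membership in $L^{\alpha,p}$ is immediate because $h=J_\alpha g$ with $g\in L^p$, and $h$ is bounded since $\|h\|_\infty\leq\|g\|_\infty$.

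The core step is to verify $h\in C^\beta$. As $g\in L^\infty$, its maximal function satisfies $Mg\leq\|g\|_\infty$ everywhere, hence $Mg\in L^\infty$. For $\alpha<1$, Theorem \ref{eleeme} gives $|h(x)-h(y)|\leq C_\alpha d(x,y)^\alpha(Mg(x)+Mg(y))\leq C\|g\|_\infty d(x,y)^\alpha$ for almost every $x,y$, so after modification on a null set $h$ is a bounded Lipschitz-$\alpha$ function, i.e. $h\in C^\alpha$. For $0<\beta\leq\alpha$ one then uses that a bounded Lipschitz-$\alpha$ function is Lipschitz-$\beta$, splitting into the region $d(x,y)\leq 1$, where $d(x,y)^\alpha\leq d(x,y)^\beta$, and the region $d(x,y)>1$, where $|h(x)-h(y)|\leq 2\|h\|_\infty\leq 2\|h\|_\infty d(x,y)^\beta$. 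For $\alpha\geq1$ and any fixed $0<\beta<1$, the second part of Theorem \ref{eleeme} directly yields $|h(x)-h(y)|\leq C_{\alpha,\beta}d(x,y)^\beta(Mg(x)+Mg(y))\leq C\|g\|_\infty d(x,y)^\beta$, so again $h\in C^\beta$. In either case $h\in C^\beta\cap L^{\alpha,p}$ with $\|f-h\|_{\alpha,p}<\epsilon$, which proves the claimed density.

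I do not expect a serious obstacle here; the one point requiring care is the passage from the pointwise inequality of Theorem \ref{eleeme} to a genuine global $C^\beta$ estimate, which works precisely because $g$ is bounded (so $Mg$ is bounded) and because the boundedness of $h$ controls the large-distance regime $d(x,y)>1$. This is also where the restriction on the range of $\beta$ enters: it is exactly the Lipschitz order that $J_\alpha$ produces from $L^\infty$ data, namely $\alpha$ when $\alpha<1$ and any $\beta<1$ when $\alpha\geq1$.
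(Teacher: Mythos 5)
Your proof is correct and follows essentially the same route as the paper: the paper derives this corollary in one line from Remark \ref{linf} together with Corollary \ref{eleeme2} (itself a consequence of Theorem \ref{eleeme}), and you simply unpack that chain, going directly from Theorem \ref{eleeme} with $Mg\leq\|g\|_\infty$ and making explicit the embedding of bounded Lipschitz-$\alpha$ functions into $C^\beta$ for $\beta\leq\alpha$ that the paper leaves implicit. No gaps; your explicit treatment of the large-distance regime $d(x,y)>1$ is exactly the detail hidden in the paper's citation.
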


As a last corollary of theorem \ref{eleeme}, since $Mg$ is a Haj\symbol{170}asz gradient for potential functions, we get the following Poincaré inequality.

\begin{coro}Let $0<\alpha<1$ and $f=J_\alpha g$ for some $g$ such that $f\in L^1_{loc}$, then for each ball $B$ we get
\begin{align*}\fint_B |f-f_B|\leq C\text{diam}(B)^\alpha \fint_B Mg.\end{align*}
\end{coro}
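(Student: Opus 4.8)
The plan is to read off, from Theorem~\ref{eleeme}, that the Hardy--Littlewood maximal function $Mg$ is (up to a fixed constant) an $\alpha$-Haj\symbol{170}asz gradient of $f$, and then to feed this gradient into the Poincar\'e inequality for Haj\symbol{170}asz gradients recorded in the Sobolev subsection of Section~2.

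First, since $f\in L^1_{loc}$ it is finite almost everywhere, so the hypotheses of Theorem~\ref{eleeme} are met. Because $0<\alpha<1$, that theorem gives
\begin{align*}|f(x)-f(y)|\leq C_\alpha d(x,y)^\alpha(Mg(x)+Mg(y))\end{align*}
for every pair $x,y$ outside a set of measure zero. Setting $g'=C_\alpha Mg\geq 0$, this is precisely the statement that $g'$ is an $\alpha$-Haj\symbol{170}asz gradient of $f$ in the sense of Section~2.

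With this in hand I would apply the Poincar\'e inequality for $\beta$-Haj\symbol{170}asz gradients (with $\beta=\alpha$ and gradient $g'$) stated in Section~2, which yields
\begin{align*}\fint_B|f-f_B|\,dm\leq C\,\text{diam}(B)^\alpha\fint_B g'\,dm=C\,C_\alpha\,\text{diam}(B)^\alpha\fint_B Mg\,dm,\end{align*}
and absorbing $C_\alpha$ into the constant gives the claim for every ball $B$. Alternatively, one can bypass the cited inequality and argue directly: for $x,y\in B$ one has $d(x,y)\leq\text{diam}(B)$, so $|f(x)-f(y)|\leq\text{diam}(B)^\alpha(g'(x)+g'(y))$; averaging twice over $B$ controls $\fint_B\fint_B|f(x)-f(y)|\,dm(y)\,dm(x)$ by $2\,\text{diam}(B)^\alpha\fint_B g'\,dm$, and Jensen gives $\fint_B|f-f_B|\,dm\leq\fint_B\fint_B|f(x)-f(y)|\,dm(y)\,dm(x)$.

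Since the substantive work was already carried out in Theorem~\ref{eleeme}, I do not expect a genuine obstacle here; the corollary is essentially a packaging of that theorem together with a standard gradient-to-Poincar\'e step. The only point deserving attention is integrability of the right-hand side: the estimate is vacuous unless $\fint_B Mg\,dm<\infty$, which holds automatically when $g\in L^p$ with $p>1$ (then $Mg\in L^p\subset L^1_{loc}$), and is the natural reading of the hypothesis.
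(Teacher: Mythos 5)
Your proposal is correct and follows exactly the paper's route: the paper derives this corollary by noting (via Theorem~\ref{eleeme}) that $Mg$ is, up to a constant, an $\alpha$-Haj\symbol{170}asz gradient of $f$ and then invoking the Poincar\'e inequality for Haj\symbol{170}asz gradients stated in Section~2. Your alternative direct averaging argument and the remark on integrability are fine but optional extras.
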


Now, regarding Besov spaces, as $M^{\alpha,p}\embeds B^{\alpha-\epsilon}_{p,p}$ for $1\leq p<\infty$ and $0<\epsilon<\alpha$, from \ref{eleeme2} we obtain for $\alpha<1$ $L^{\alpha,p}\embeds B^{\alpha-\epsilon}_{p,p}$. This also holds true for $B^{\alpha-\epsilon}_{p,q}$. First, a lemma.

\begin{lem}\label{besovk}Let $0<\alpha<1$ and $q>0$ satifying $q(N-\alpha)<N<q(N+q-\alpha)$. Then there exists $C>0$ such that, for every $z\in X$ and $t>0$
\begin{align*}\int_X\fint_{B(x,t)}|k_\alpha(x,z)-k_\alpha(y,z)|^q dm(y)dm(x)\leq C t^{N-q(N-\alpha)}.\end{align*}
\end{lem}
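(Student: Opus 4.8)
**

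The plan is to split the double integral over the ball $B(x,t)$ into the near region and the far region with respect to how $d(x,z)$ compares to $t$, then integrate in $x$ using the two pointwise bounds on differences of $k_\alpha$ (properties 3 and 5 of Lemma \ref{propk}). The key observation is that for $y \in B(x,t)$ we have $d(x,y) < t$, so property 5 gives
\begin{align*}
|k_\alpha(x,z) - k_\alpha(y,z)| \leq C\, t \, (d(x,z) \wedge d(y,z))^{-(N+1-\alpha)},
\end{align*}
which is the useful estimate when $z$ is far from the ball, while the crude bound from property 3,
\begin{align*}
|k_\alpha(x,z) - k_\alpha(y,z)| \leq C\left(d(x,z)^{-(N-\alpha)} + d(y,z)^{-(N-\alpha)}\right),
\end{align*}
is what I would use when $z$ is close. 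The two hypotheses $q(N-\alpha) < N$ and $N < q(N+1-\alpha)$ are exactly the integrability thresholds making the near bound integrable in $x$ and the far bound integrable in $x$, respectively (compare Lemma \ref{ineqk}), so the structure of the proof is dictated by these constraints.

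First I would fix $z$ and $t$ and decompose the outer $x$-integral according to whether $d(x,z) < 4t$ or $d(x,z) \geq 4t$ (the threshold chosen so that when $x$ is far, every $y \in B(x,t)$ satisfies $d(y,z) \geq 3t$, hence $d(x,z) \sim d(y,z)$). For the near part, $\int_{d(x,z)<4t} \fint_{B(x,t)} |k_\alpha(x,z)-k_\alpha(y,z)|^q\, dm(y)\, dm(x)$, I would apply property 3 raised to the $q$-th power and bound the two resulting terms. The term in $d(x,z)^{-q(N-\alpha)}$ integrates over $\{d(x,z)<4t\}$ to give $C\, t^{N-q(N-\alpha)}$ precisely because $q(N-\alpha) < N$; the term in $d(y,z)^{-q(N-\alpha)}$ requires interchanging the order of integration (Tonelli) and using the same singular-integrability bound in $y$ together with the doubling property to absorb the average $\fint_{B(x,t)}$, again landing on $t^{N-q(N-\alpha)}$.

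For the far part, $\int_{d(x,z)\geq 4t}\fint_{B(x,t)}|k_\alpha(x,z)-k_\alpha(y,z)|^q\, dm(y)\, dm(x)$, I would use property 5 to the $q$-th power, giving an integrand bounded by $C\, t^q\, d(x,z)^{-q(N+1-\alpha)}$ (since $d(y,z)\sim d(x,z)$ there, the minimum in property 5 is comparable to $d(x,z)$, and the $\fint$ disappears). Integrating $d(x,z)^{-q(N+1-\alpha)}$ over the exterior region $\{d(x,z)\geq 4t\}$ converges precisely because $q(N+1-\alpha) > N$, producing $C\, t^{N-q(N+1-\alpha)}$; multiplying by the prefactor $t^q$ yields $C\, t^{N-q(N-\alpha)}$, matching the near part exactly. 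Summing the two contributions gives the claimed bound.

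The main obstacle, and the only step that is not a routine application of the integrability estimates from section 2, is handling the $d(y,z)^{-(N-\alpha)}$ term in the near region: because the singularity sits at $y$ rather than at the outer variable $x$, one cannot directly integrate in $x$, and I expect to swap the order of integration and then carefully track how the normalizing factor $1/m(B(x,t)) \sim t^{-N}$ interacts with the region $\{x : y \in B(x,t)\} = B(y,t)$ to recover the correct power of $t$. Everything else reduces to the distance-integrability formulas and the doubling condition already recorded in the preliminaries.
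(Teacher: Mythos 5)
Your proposal is correct and takes essentially the same approach as the paper's proof, which splits the pairs $(x,y)$ into $A_1=\{d(x,y)<t,\,d(x,z)<2t\}$ and $A_2=\{d(x,y)<t,\,d(x,z)\geq 2t\}$ and applies properties 3 and 5 of Lemma \ref{propk} exactly as you describe (your threshold $4t$ versus the paper's $2t$ is immaterial), with the same integrability conditions yielding $t^{N-q(N-\alpha)}$ in each region, and with your Tonelli swap for the $d(y,z)^{-q(N-\alpha)}$ term simply made explicit where the paper compresses it into one line. You also rightly read the hypothesis as $N<q(N+1-\alpha)$: the exponent $N+q-\alpha$ in the statement is a typo, since the paper's own proof invokes $N<q(N+1-\alpha)$.
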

\begin{proof} Consider
\begin{align*}A_1=\{(x,y):d(x,y)<t,d(x,z)<2t\};\end{align*}
\begin{align*}A_2=\{(x,y):d(x,y)<t,2t\leq d(x,z)\}.\end{align*}
Integrating over $A_1$, we get
\begin{align*}\iint_{A_1}\frac{1}{t^N}|k_\alpha(x,z)-k_\alpha(y,z)|^q dm(y)dm(x) &\leq C\int_{B(z,3t)}|k_\alpha(x,z)|^q dm(x)\\
&\leq C\int_{B(z,3t)}\frac{1}{d(x,z)^{q(N-\alpha)}}dm(x)\\
&\leq Ct^{N-q(N-\alpha)},\end{align*}
and the last inequality holds because $N>q(N-\alpha)$.

In $A_2$ we have $d(x,z)\sim d(y,z)$, and then, as $d(x,y)<t$,
\begin{align*}\iint_{A_2}\frac{1}{t^N}|k_\alpha(x,z)-k_\alpha(y,z)|^qdm(y)dm(x) &\leq\\
&\hspace{-2cm}\leq Ct^q\iint_{A_2}\frac{1}{t^N}\frac{1}{d(x,z)^{q(N+1-\alpha)}}dm(y)dm(x)\\
&\hspace{-2cm}\leq Ct^q\int_{X\menos B(z,2t)}\frac{1}{d(x,z)^{q(N+1-\alpha)}}dm(x)\\
&\hspace{-2cm}\leq C t^qt^{N-q(N+1-\alpha)}\leq Ct^{N-q(N-\alpha)},\end{align*}
given $N<q(N+1-\alpha)$.
\end{proof}

\begin{propo}Let $f=J_\alpha g$, $0<\alpha<1$ and $1\leq p\leq\infty$, then for $t>0$ we get
\begin{align*}E_pf(t)\leq Ct^\alpha\|g\|_p\end{align*}\end{propo}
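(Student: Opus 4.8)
The plan is to estimate $E_pf(t)^p=\int_X\fint_{B(x,t)}|f(x)-f(y)|^p\,dm(y)\,dm(x)$ directly from the representation of $f=J_\alpha g$. Since $\int_X k_\alpha(x,z)\,dm(z)=1$ is not even needed here, I would simply write $f(x)-f(y)=\int_X g(z)\bigl(k_\alpha(x,z)-k_\alpha(y,z)\bigr)\,dm(z)$ and, for $1<p<\infty$, split the kernel difference as $|k_\alpha(x,z)-k_\alpha(y,z)|^{1/p}\cdot|k_\alpha(x,z)-k_\alpha(y,z)|^{1/p'}$ before applying H\"older's inequality in the $z$ variable. This produces
\[
|f(x)-f(y)|^p\le\Big(\int_X|g(z)|^p|k_\alpha(x,z)-k_\alpha(y,z)|\,dm(z)\Big)\Big(\int_X|k_\alpha(x,z)-k_\alpha(y,z)|\,dm(z)\Big)^{p/p'}.
\]

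The key preliminary step is a pointwise $L^1$-bound on the kernel difference: whenever $d(x,y)<t$,
\[
\int_X|k_\alpha(x,z)-k_\alpha(y,z)|\,dm(z)\le C\,d(x,y)^\alpha\le Ct^\alpha.
\]
I would prove this by splitting the $z$-domain at $d(x,z)=2d(x,y)$. On the near part I use the size bound (property 3 of Lemma \ref{propk}) for both $k_\alpha(x,z)$ and $k_\alpha(y,z)$, together with $\int_{B(x,r)}d(x,z)^{-(N-\alpha)}\,dm\sim r^\alpha$ and the fact that $d(y,z)<3d(x,y)$ there; on the far part, where $d(x,z)\sim d(y,z)$, I use the smoothness bound (property 5) and $\int_{X\menos B(x,r)}d(x,z)^{-(N+1-\alpha)}\,dm\sim r^{-(1-\alpha)}$. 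This is where most of the computation sits, but it is routine given the integrability estimates of Section 2.

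With the second factor thus controlled by $Ct^{\alpha p/p'}$, I would insert the bound into $E_pf(t)^p$, apply Tonelli to interchange the $z$-integration with the integration in $x$ and the average in $y$, and recognize the remaining inner integral
\[
\int_X\fint_{B(x,t)}|k_\alpha(x,z)-k_\alpha(y,z)|\,dm(y)\,dm(x)
\]
as exactly the quantity controlled by Lemma \ref{besovk} with $q=1$, which is admissible since $N-\alpha<N<N+1-\alpha$ for $0<\alpha<1$; this contributes a factor $Ct^{N-(N-\alpha)}=Ct^\alpha$. Collecting the two factors and using $p/p'+1=p$ gives $E_pf(t)^p\le Ct^{\alpha p}\|g\|_p^p$, which is the desired estimate after taking $p$-th roots.

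Finally I would dispose of the endpoints separately. For $p=1$ the H\"older split is unnecessary: one applies Tonelli directly to $E_1f(t)$ and invokes Lemma \ref{besovk} with $q=1$. For $p=\infty$ the pointwise bound above already yields $|f(x)-f(y)|\le\|g\|_\infty\int_X|k_\alpha(x,z)-k_\alpha(y,z)|\,dm(z)\le Ct^\alpha\|g\|_\infty$ whenever $d(x,y)<t$. The only genuine obstacle is obtaining the pointwise $L^1$-bound on the kernel difference with the sharp power $d(x,y)^\alpha$; once that estimate and the $q=1$ case of Lemma \ref{besovk} are in place, the exponent bookkeeping closes the argument.
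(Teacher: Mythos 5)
Your proposal is correct and takes essentially the same route as the paper: the identical H\"older split of the kernel difference, the bound $\int_X|k_\alpha(x,z)-k_\alpha(y,z)|\,dm(z)\leq Cd(x,y)^\alpha$ (which is precisely Lemma \ref{ineqk} with $q=1$, so you need not re-derive it by hand), and then Tonelli plus Lemma \ref{besovk} with $q=1$, with the same exponent bookkeeping $p/p'+1=p$. The only cosmetic difference is at $p=\infty$, where the paper invokes the maximal-function estimate of Theorem \ref{eleeme} while you reuse the pointwise $L^1$ kernel bound directly; both are valid, and yours is marginally more self-contained.
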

\begin{proof} If $p<\infty$,
\begin{align*}|f(x)-f(y)|^p &\leq\left(\int_X|k_\alpha(x,z)-k_\alpha(y,z)|^{\frac{1}{p}+\frac{1}{p'}}|g(z)|dm(z)\right)^p\\
&\leq\left(\int_X|k_\alpha(x,z)-k_\alpha(y,z)||g(z)|^pdm(z)\right)\\
&\hspace{1cm}\times\left(\int_X|k_\alpha(x,z)-k_\alpha(y,z)|dm(z)\right)^{p/p'}.\end{align*}
By lemma \ref{ineqk} for $q=1$, as $d(x,y)<t$ and $\alpha<1$,
\begin{align*}\int_X|k_\alpha(x,z)-k_\alpha(y,z)|dm(z)\leq Ct^\alpha\end{align*}
so
\begin{align*}\int_X\fint_{B(x,t)}|f(x)-f(y)|^pdm(y)dm(x) &\leq\\
&\hspace{-6cm}\leq C t^{\alpha p/p'}\int_X\left(\int_X\fint_{B(x,t)}|k_\alpha(x,z)-k_\alpha(y,z)|dm(y)dm(x)\right)|g(z)|^pdm(z)\end{align*}
and by lemma \ref{besovk} (also taking $q=1$)
\begin{align*}\int_X\fint_{B(x,t)}|f(x)-f(y)|^pdm(y)dm(x)\leq Ct^{\alpha p/p'} t^\alpha \|g\|_p^p=C t^{\alpha p}\|g\|_p^p.\end{align*}
For $p=\infty$, as $\alpha<1$,
\begin{align*}E_\infty f(t) &=\sup_{d(x,y)<t}|f(x)-f(y)|\\
&\leq C\sup_{d(x,y)<t} d(x,y)^\alpha(Mg(x)+Mg(y))\\
&\leq Ct^\alpha\|g\|_\infty.\end{align*}
\end{proof}

We can now conclude the following embedding in Besov spaces.

\begin{coro}Let $1\leq p\leq\infty$ and $0<\alpha<1$. Then for $1\leq q<\infty$ and $0<\epsilon<\alpha$ we have $L^{\alpha,p}\embeds B^{\alpha-\epsilon}_{p,q}$. For $q=\infty$ we obtain $L^{\alpha,p}\embeds B^\alpha_{p,\infty}$.\end{coro}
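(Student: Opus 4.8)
The plan is to combine the smoothing estimate from the preceding proposition, namely $E_pf(t)\leq Ct^\alpha\|g\|_p$ whenever $f=J_\alpha g$, with a crude uniform bound on the modulus of continuity, and then to split the Besov seminorm integral at $t=1$. Fix $f\in L^{\alpha,p}$ and write $f=J_\alpha g$ with $g\in L^p$; it suffices to bound $\|f\|_{B^{\alpha-\epsilon}_{p,q}}$ (respectively $\|f\|_{B^\alpha_{p,\infty}}$) by a constant multiple of $\|f\|_p+\|g\|_p$, since taking the infimum over all such $g$ then yields the continuous inclusion with respect to $\|\cdot\|_{\alpha,p}$.

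First I would record the elementary bound $E_pf(t)\leq C\|f\|_p$, valid for every $t>0$. This follows from $|f(x)-f(y)|^p\leq 2^{p-1}(|f(x)|^p+|f(y)|^p)$, Fubini, the symmetry of $d$, and Ahlfors regularity (so that $m(B(x,t))\sim t^N\sim m(B(y,t))$ whenever $d(x,y)<t$): the double integral defining $E_pf(t)^p$ is then dominated by $\|f\|_p^p$ uniformly in $t$. This is the estimate that governs the behaviour of the seminorm for large $t$, while the proposition governs small $t$.

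Next, for $1\leq q<\infty$, I would split
\begin{align*}\int_0^\infty t^{-(\alpha-\epsilon)q}E_pf(t)^q\frac{dt}{t}=\int_0^1 t^{-(\alpha-\epsilon)q}E_pf(t)^q\frac{dt}{t}+\int_1^\infty t^{-(\alpha-\epsilon)q}E_pf(t)^q\frac{dt}{t}.\end{align*}
On $(0,1)$ I insert $E_pf(t)\leq Ct^\alpha\|g\|_p$, producing an integrand comparable to $\|g\|_p^q\,t^{\epsilon q-1}$, which is integrable because $\epsilon q>0$. On $(1,\infty)$ I instead use the crude bound $E_pf(t)\leq C\|f\|_p$, producing an integrand comparable to $\|f\|_p^q\,t^{-(\alpha-\epsilon)q-1}$, integrable because $(\alpha-\epsilon)q>0$, that is, exactly because $\epsilon<\alpha$. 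Adding the two pieces together with the $\|f\|_p$ term gives $\|f\|_{B^{\alpha-\epsilon}_{p,q}}\leq C(\|f\|_p+\|g\|_p)$, as required.

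For $q=\infty$ the argument is more direct and yields the sharper index: the proposition gives $t^{-\alpha}E_pf(t)\leq C\|g\|_p$ for all $t>0$, whence $\sup_{t>0}t^{-\alpha}E_pf(t)\leq C\|g\|_p$ and no loss of $\epsilon$ is needed. I expect no serious obstacle in this corollary; the only point requiring care is that the convergence of the integral at infinity, which is what forces $\epsilon>0$ when $q<\infty$, must be supplied by the crude $L^p$ bound rather than by the smoothing estimate, since the latter gives only $t^{\epsilon q}$ growth and diverges at infinity. This dichotomy between the two regimes is precisely what explains why the endpoint $q=\infty$ improves to $B^\alpha_{p,\infty}$.
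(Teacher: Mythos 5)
Your argument is correct and is essentially the paper's own proof: the paper likewise splits the Besov seminorm at $t=1$, applies the smoothing estimate $E_pf(t)\leq Ct^\alpha\|g\|_p$ on $(0,1)$ and the crude bound $E_pf(t)\leq C\|f\|_p$ on $(1,\infty)$ (with $\epsilon<\alpha$ supplying convergence at infinity), and for $q=\infty$ takes the supremum of $t^{-\alpha}E_pf(t)$ directly to get the endpoint $B^\alpha_{p,\infty}$. Your only addition is an explicit justification, via Fubini and Ahlfors regularity, of the crude bound $E_pf(t)\leq C\|f\|_p$, which the paper asserts without proof; that verification is accurate.
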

\begin{proof}Let $f=J_\alpha g$. By the previous proposition, if $q=\infty$,
\begin{align*}\|f\|_{B^\alpha_{p,\infty}}=\|f\|_p+\sup_{t>0}t^{-\alpha}E_pf(t)\leq C\|f\|_{\alpha,p}.\end{align*}
And for $1\leq q<\infty$, as we also have $E_p f\leq C\|f\|_p$,
\begin{align*}\|f\|_{B^{\alpha-\epsilon}_{p,q}} &\leq C\|f\|_p+C\left(\int_0^1 t^{-(\alpha-\epsilon)q}E_pf(t)^q\frac{dt}{t}\right)^{1/q}\\
&\leq C\|f\|_p+C\|g\|_p\left(\int_0^1 t^{\epsilon q}\frac{dt}{t}\right)\leq \frac{C}{\epsilon^{1/q}}\|f\|_{\alpha,p}.\end{align*}
\end{proof}

We finish this section with Sobolev-type embedding theorems for potential spaces. First we need a lemma.

\begin{lem}\label{boundk}For $\alpha>0$ and $q>0$ satisfying $q(N-\alpha)<N<q(N+\alpha)$, there exists $C>0$ such that for every $x\in X$,
\begin{align*}\int_X k_\alpha(x,y)^qdm(y)\leq C<\infty.\end{align*}
\end{lem}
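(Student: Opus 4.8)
The plan is to split the integral into the region near the diagonal, where $d(x,y) < 4$, and the far region, where $d(x,y) \geq 4$, using the two different pointwise bounds on $k_\alpha$ supplied by Lemma \ref{propk}. Near the diagonal, property 3 gives $k_\alpha(x,y) \leq C d(x,y)^{-(N-\alpha)}$, so the contribution is $\int_{B(x,4)} k_\alpha(x,y)^q \, dm(y) \leq C \int_{B(x,4)} d(x,y)^{-q(N-\alpha)} \, dm(y)$. By the integrability property of the distance function recalled in Section 2, this integral over a ball is finite precisely when the exponent satisfies $-q(N-\alpha) > -N$, i.e. $q(N-\alpha) < N$, which is exactly the left-hand hypothesis; the value is then $\sim 4^{N - q(N-\alpha)}$, a constant independent of $x$.

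For the far region I would use property 4, $k_\alpha(x,y) \leq C d(x,y)^{-(N+\alpha)}$ valid for $d(x,y) \geq 4$, so that $\int_{X \setminus B(x,4)} k_\alpha(x,y)^q \, dm(y) \leq C \int_{X \setminus B(x,4)} d(x,y)^{-q(N+\alpha)} \, dm(y)$. Again by the distance-integrability property, the integral over the complement of a ball is finite exactly when $-q(N+\alpha) < -N$, i.e. $N < q(N+\alpha)$, which is the right-hand hypothesis; its value is $\sim 4^{N - q(N+\alpha)}$, once more a constant uniform in $x$. Adding the two contributions yields the uniform bound $\int_X k_\alpha(x,y)^q \, dm(y) \leq C < \infty$, with $C$ depending only on $\alpha, q, N$ and the Ahlfors constant but not on $x$.

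This proof is essentially a direct application of the two integrability estimates for powers of the distance function, so there is no genuine obstacle; the only thing to be careful about is bookkeeping the two exponent conditions and checking they match the two hypotheses. The role of the threshold radius $4$ is simply that it is the value beyond which the stronger decay $d(x,y)^{-(N+\alpha)}$ of property 4 kicks in, matching the decay built into the definition of $k_\alpha$ through the factor $t^\alpha/(1+t^\alpha)^2$. The uniformity in $x$ is automatic because the Ahlfors regularity constant in the distance-integrability estimates does not depend on the center, and the radius $4$ is fixed.
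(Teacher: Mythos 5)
Your proof is correct and is essentially identical to the paper's: the author likewise splits at radius $4$, bounding $k_\alpha(x,y)^q$ by $C\,\chi_{B(x,4)}(y)\,d(x,y)^{-q(N-\alpha)}+C\,\chi_{X\setminus B(x,4)}(y)\,d(x,y)^{-q(N+\alpha)}$ via properties 3 and 4 of Lemma \ref{propk}, and then invokes the distance-integrability estimates exactly as you do. Your write-up simply makes explicit the exponent bookkeeping and the uniformity in $x$ that the paper leaves to the reader.
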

\begin{proof} By lemma \ref{propk},
\begin{align*}k_\alpha(x,y)^q\leq C\frac{\chi_{B(x,4)}(y)}{d(x,y)^{q(N-\alpha)}}+C\frac{\chi_{X\menos B(x,4)}(y)}{d(x,y)^{q(N+\alpha)}},\end{align*}
and restrictions over $q$ guarantee integrability.
\end{proof}

\begin{thm}Let $1<p<\infty$ and $\alpha>0$. The following embeddings hold for $L^{\alpha,p}$
\begin{description} 
	\item[a.] If $p<\frac{N}{\alpha}$,
	\begin{align*}L^{\alpha,p}\embeds L^q\end{align*}
	for $p\leq q\leq p^*$ where $\frac{1}{p^*}=\frac{1}{p}-\frac{\alpha}{N}$.
	\item[b.] If $p=\frac{N}{\alpha}$, then for $p\leq q<\infty$,
	\begin{align*}L^{\alpha,p}\embeds L^q.\end{align*}
	If in addition $\alpha<1$,
	\begin{align*}L^{\alpha,p}\embeds BMO.\end{align*}
	\item[c.] If $p>\frac{N}{\alpha}$,then for $p\leq q\leq \infty$
	\begin{align*}L^{\alpha,p}\embeds L^q.\end{align*}
	If in addition $\alpha<1+N/p$,
	\begin{align*}L^{\alpha,p}\embeds C^{\alpha-N/p}.\end{align*}
\end{description}
\end{thm}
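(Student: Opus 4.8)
The plan is to reduce all three parts to properties of the kernel $k_\alpha$ established in Lemma \ref{propk} and the integrability estimates in Lemma \ref{boundk}, by writing $f=J_\alpha g$ with $g\in L^p$ and estimating $\|J_\alpha g\|_q$ directly via Young-type inequalities for the integral operator with kernel $k_\alpha$. Since $|J_\alpha g(x)|\le C\,I_\alpha|g|(x)$ as noted after the definition of $J_\alpha$, the $L^p\to L^q$ bounds in parts (a), (b), (c) for the range up to $p^*$ (resp. all $q<\infty$ when $p=N/\alpha$) are exactly the classical Hardy--Littlewood--Sobolev estimates for the Riesz potential $I_\alpha$ in Ahlfors spaces. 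I would invoke these (they are standard for $I_\alpha$ in spaces of homogeneous type, and $k'\sim d(x,y)^{-(N-\alpha)}$ controls $k_\alpha$ locally), so the interesting content is the endpoint/regularity statements: $BMO$ in (b), and $L^\infty$ together with Hölder continuity in (c).

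For part (c), the embedding into $L^q$ for all $p\le q\le\infty$ I would get by interpolating the $L^p\to L^p$ bound $\|J_\alpha g\|_p\le\|g\|_p$ (property 7 of Lemma \ref{propk}) with an $L^p\to L^\infty$ bound. The latter follows from Hölder: $|J_\alpha g(x)|\le\|g\|_p\,\|k_\alpha(x,\cdot)\|_{p'}$, and $\|k_\alpha(x,\cdot)\|_{p'}\le C$ by Lemma \ref{boundk} with $q=p'$, whose hypothesis $p'(N-\alpha)<N<p'(N+\alpha)$ is equivalent to $p>N/\alpha$. For the Hölder continuity $L^{\alpha,p}\embeds C^{\alpha-N/p}$, I would use Theorem \ref{eleeme}: $|f(x)-f(y)|\le C\,d(x,y)^{\gamma}(Mg(x)+Mg(y))$ for suitable $\gamma$. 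Since $p>N/\alpha$ forces $Mg\in L^p$ with $p$ large, I expect the exponent to come out as $\alpha-N/p$ rather than $\alpha$, so I would need to redo the dyadic annular decomposition of $\int|k_\alpha(x,z)-k_\alpha(y,z)|\,|g(z)|\,dm(z)$ used in Theorem \ref{eleeme}, but estimate the inner integrals with Hölder (pulling out $\|g\|_p$ against a power of $d(x,z)$) rather than with the maximal function. The constraint $\alpha<1+N/p$ is exactly what keeps the gradient-difference kernel $d(x,z)^{-(N+1-\alpha)}$ integrable at infinity against $d(x,\cdot)$ after the Hölder pairing, so the computation closes in that range.

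For the $BMO$ endpoint in part (b), with $p=N/\alpha$ and $\alpha<1$, I would show $\fint_B|f-f_B|\,dm\le C\|g\|_p$ uniformly over balls $B$. The natural route is the Poincaré-type inequality already available: by Theorem \ref{eleeme}, $Mg$ is (a constant multiple of) an $\alpha$-Hajłasz gradient of $f$, so $\fint_B|f-f_B|\le C\,\mathrm{diam}(B)^\alpha\fint_B Mg\,dm$. Applying Hölder with exponent $p=N/\alpha$ gives $\fint_B Mg\le m(B)^{-\alpha/N}\|Mg\|_p\sim \mathrm{diam}(B)^{-\alpha}\|Mg\|_p$, and the powers of $\mathrm{diam}(B)^\alpha$ cancel, leaving $\fint_B|f-f_B|\le C\|Mg\|_p\le C\|g\|_p$ by the maximal theorem ($p>1$ since $\alpha<N$). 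This is uniform in $B$, which is precisely the $BMO$ bound.

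The main obstacle I anticipate is the Hölder-continuity estimate in (c): unlike Theorem \ref{eleeme}, which produces exponent $\alpha$ (capped below $1$ via $\beta$), here the exponent must be the genuinely fractional $\alpha-N/p$, and getting it requires carefully balancing the three regions ($B(x,2d)$, the intermediate annulus, and the far region $d(x,z)\ge 4$ where property 6 of Lemma \ref{propk} gives the improved decay) so that each contributes $d(x,y)^{\alpha-N/p}\|g\|_p$ and no region forces a worse power. The condition $\alpha<1+N/p$ is what I expect to be the exact threshold making the near-diagonal difference estimate (property 5) integrable after the Hölder pairing; verifying that the three contributions all match is the delicate bookkeeping, though none of the individual integrals is conceptually hard given Lemmas \ref{propk} and \ref{boundk}.
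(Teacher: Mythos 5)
There is a genuine gap in your treatment of the $L^q$ embeddings at the critical index. You propose to deduce all the $L^p\to L^q$ bounds --- including $p=N/\alpha$, $p\le q<\infty$ in part (b) --- from the pointwise domination $|J_\alpha g|\le C I_\alpha|g|$ together with Hardy--Littlewood--Sobolev for the Riesz potential. But HLS gives $I_\alpha:L^r\imply L^{r^*}$ only for $1<r<N/\alpha$, and at the endpoint $r=N/\alpha$ the Riesz potential is bounded into \emph{no} $L^q$ whatsoever: already in $\RR^n$, dilation invariance of the kernel $d(x,y)^{-(N-\alpha)}$ forces $1/q=1/p-\alpha/N$, which at $p=N/\alpha$ would mean $q=\infty$, where the strong-type bound fails. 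So the domination by $I_\alpha$ is useless exactly where part (b) needs it. The point the paper exploits is that $k_\alpha$ is genuinely better than the Riesz kernel at infinity: property 4 of Lemma \ref{propk} gives $k_\alpha(x,y)\le Cd(x,y)^{-(N+\alpha)}$ for $d(x,y)\ge 4$, whence Lemma \ref{boundk} yields $\int_X k_\alpha(x,y)^a\,dm(y)\le C$ for any $a$ with $a(N-\alpha)<N<a(N+\alpha)$. Choosing $a>1$ by $1+\tfrac1q=\tfrac{\alpha}{N}+\tfrac1a$ and running a Young-type inequality (splitting $k_\alpha=k_\alpha^{a/q+a/p'}$, $|g|=|g|^{p/q+p/a'}$ and applying H\"older with three exponents) gives $\|J_\alpha g\|_q\le C\|g\|_p$ at $p=N/\alpha$. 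Without some such use of the global decay of $k_\alpha$, part (b) does not close; citing HLS cannot repair it. (A smaller instance of the same issue: for $p>N/\alpha$, $I_\alpha|g|$ need not even be finite, though there you correctly abandon the domination and argue via Lemma \ref{boundk} with $q=p'$, which is the paper's argument.)

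The remainder of your plan essentially coincides with the paper's proof. For part (a) the paper does not cite HLS but proves it self-contained by Hedberg's trick: split $I_\alpha|g|$ at radius $t$, obtain $Ct^\alpha Mg(x)+Ct^{-N/p^*}\|g\|_p$, optimize in $t$, and use the maximal theorem; your citation route is legitimate but less self-contained, and note that intermediate $q\in[p,p^*]$ still needs interpolation with $\|J_\alpha g\|_p\le\|g\|_p$. Your $BMO$ argument (Poincar\'e via the Haj\l asz gradient $Mg$ from Theorem \ref{eleeme}, then H\"older at exponent $N/\alpha$ so the powers of $\mathrm{diam}(B)$ cancel) is exactly the paper's. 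For the H\"older continuity in (c), the ``delicate bookkeeping'' you anticipate is unnecessary: Lemma \ref{ineqk} with $q=p'$ already packages the two-region estimate, giving in one line $|f(x)-f(y)|\le\|g\|_p\bigl(\int_X|k_\alpha(x,z)-k_\alpha(y,z)|^{p'}dm(z)\bigr)^{1/p'}\le C\|g\|_p\,d(x,y)^{\alpha-N/p}$ under $p'(N-\alpha)<N<p'(N-\alpha+1)$, which is precisely $p>N/\alpha$ and $\alpha<1+N/p$; your identification of $\alpha<1+N/p$ as the integrability threshold for the smoothness kernel $d(x,z)^{-(N+1-\alpha)}$ at infinity is correct.
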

\begin{proof}
\begin{description} 
	\item[a.] We know $L^{\alpha,p}\embeds L^p$ (for $\|f\|_p\leq \|f\|_{\alpha,p}$), then if we prove $L^{\alpha,p}\embeds L^{p*}$, by an interpolation argument we are done. This follows from $|J_\alpha f|\leq CI_\alpha|f|$, as $(N-\alpha)p'>N$ and for any $t>0$ we get
\begin{align*}|J_\alpha f(x)| &\leq C\int_{B(x,t)}\frac{|f(y)|}{d(x,y)^{N-\alpha}}dm(y)+C\int_{X\menos B(x,t)}\frac{|f(y)|}{d(x,y)^{N-\alpha}}dm(y)\\
&\leq Ct^\alpha Mf(x)+Ct^{(N-(N-\alpha)p')/p'}\|f\|_p\\
&= Ct^\alpha Mf(x)+Ct^{-N/p^*}\|f\|_p.\end{align*}
This last expression attains its minimum for $t=CMf(x)^{-p/N}\|f\|_p^{p/N}$, and for this value of $t$ we obtain
\begin{align*}|J_\alpha f(x)|\leq C Mf(x)^{p/p^*}\|f\|_p^{1-p/p^*},\end{align*}
and as $p>1$, boundedness of the maximal function implies
\begin{align*}\int_X |J_\alpha f|^{p^*}dm\leq C\|f\|_p^{p^*-p}\int_X (Mf)^pdm\leq C \|f\|_p^{p^*}.\end{align*}

	\item[b.] Let $N/\alpha=p<q<\infty$, so there exists $a>1$ such that
	\begin{align*}1+\frac{1}{q}=\frac{\alpha}{N}+\frac{1}{a}.\end{align*}
	In particular $a(N-\alpha)<N$ (and also $N<a(N+\alpha)$, as $a>1$), so by the previous lemma
	\begin{align*}\int_X k_\alpha(x,y)^a dm(y)\leq C<\infty.\end{align*}
	Let now $f=J_\alpha g$ with $g\in L^p$, as $\frac{1}{q'}=\frac{1}{p'}+\frac{1}{a'}$ by Hölder's inequality we obtain the following Young-type inequality
	\begin{align*}|f(x)| &\leq \int_X k_\alpha(x,y)^{a/q+a/p'}|g(y)|^{p/q+p/a'}dm(y)\\
				&\leq\left(\int_X k_\alpha(x,y)^a|g(y)|^pdm(y)\right)^{1/q}\left(\int_X|g(y)|^pdm(y)\right)^{1/a'}\\
				&\hspace{1cm}\times\left(\int_Xk_\alpha(x,y)^adm(y)\right)^{1/p'}\\
				&\leq C\|g\|_p^{p/a'}\left(\int_X k_\alpha(x,y)^a|g(y)|^pdm(y)\right)^{1/q}\end{align*}
	(here we use $a/q+a/p'=1$ and $p/q+p/a'=1$) and
	\begin{align*}\int_X|f(x)|^qdm(x) &\leq C\|g\|_p^{qp/a'}\int_X\int_X k_\alpha(x,y)^a|g(y)|^pdm(y)dm(x)\\
		&\leq C\|g\|_p^{p(q/a'+1)}=C\|g\|_p^q.\end{align*}
	
	Moreover, if $\alpha<1$, by Poincaré inequality for any ball $B$,
	\begin{align*}\fint_B |f-f_B| &\leq C\text{diam}(B)^\alpha \fint_B Mg\leq Cm(B)^{\alpha/N}\left(\fint_B (Mg)^{N/\alpha}\right)^{\alpha/N}\\
		&\leq C\left(\int_B (Mg)^{N/\alpha}\right)^{\alpha/N}\leq C\|g\|_{N/\alpha}\end{align*}
	and we conclude 
	\begin{align*}\|f\|_{BMO}\leq C\|f\|_{\alpha,N/\alpha}.\end{align*}
	
	\item[c.] For the first part, again by interpolation it is enough to prove $L^{\alpha,p}\embeds L^\infty$. If $f=J_\alpha g$ with $g\in L^p$,
	\begin{align*}|f(x)| &=|J_\alpha g(x)|\leq \int_X k_\alpha(x,y)|g(y)|dm(y)\\
		&\leq \|g\|_p\left(\int_X k_\alpha(x,y)^{p'}dm(y)\right)^{1/p'}\\
		&\leq C\|g\|_p\leq C\|f\|_{\alpha,p}\end{align*}
	as long as $p'(N-\alpha)<N<p'(N+\alpha)$. The second inequality is trivial for $p'\geq 1$ and the first one is equivalent to $p\alpha>N$.
	
	Assume now $\alpha<1+N/p$. Then
	\begin{align*}|f(x)-f(y)| &\leq \int_X |k_\alpha(x,z)-k_\alpha(y,z)||g(z)|dm(z)\\
		&\leq \|g\|_p\left(\int_X |k_\alpha(x,z)-k_\alpha(y,z)|^{p'}dm(z)\right)^{1/p'}\\
		&\leq C\|g\|_p d(x,y)^{\frac{N-p'(N-\alpha)}{p'}}=C\|g\|_p d(x,y)^{\alpha-N/p}\end{align*}
	if $p'(N-\alpha)<N<p'(N-\alpha+1)$. The first inequality is once again equivalent to $p>N/\alpha$, and the second to $\alpha<1+N/p$.
\end{description}
\end{proof}

\section{The inverse of $J_\alpha$}

In this section, with the fractional derivative $D_\alpha$ as defined in \cite{GSV}, we prove conditions for the composition $(I+D_\alpha)J_\alpha$ to be inversible in $L^p$ for $1<p<\infty$, which in turn will lead to inversibility of $J_\alpha$. We follow the techniques used in \cite{Hz}, proving
\begin{align*}\|I-(I+D_\alpha)J_\alpha\|_{L^p\imply L^p}<1\end{align*}
by rewriting the operators in terms of $(Q_t)_{t>0}$ instead of $(S_t)_{t>0}$, and applying the $T1$ theorem for Ahlfors spaces (see \cite{Ga}).

Let $\alpha>0$. Define
\begin{align*}n_\alpha(x,y)=\int_0^\infty \alpha t^{-\alpha}s(x,y,t)\frac{dt}{t}.\end{align*}
This kernel satisfies
\begin{align*}n_\alpha(x,y)\sim \frac{1}{d(x,y)^{N+\alpha}}\end{align*}
and
\begin{align*}|n_\alpha(x,y)-n_\alpha(x',y)|\leq C d(x,x')(d(x,y)\yy d(x',y))^{-(N+1+\alpha)}.\end{align*}

The fractional derivative can be then defined as
\begin{align*}D_\alpha f(x)=\int_X n_\alpha(x,y)(f(x)-f(y))dm(y),\end{align*}
see for instance \cite{GSV}, whenever this integral makes sense (for instance if $f$ has sufficient regularity of Lipschitz or Besov type).

Let us now rewrite the operators with  $Q_t=-t\frac{d}{dt}S_t$. Assume $f\in C_c^\gamma$ for some $\alpha<\gamma\leq 1$, then
\begin{align*}J_\alpha f(x) &=\int_X k_\alpha(x,y)f(y)dm(y)=\int_X\int_0^\infty \frac{\alpha t^{\alpha-1}}{(1+t^\alpha)^2}s(x,y,t)f(y)dtdm(y)\\
&=\int_0^\infty \frac{\alpha t^{\alpha-1}}{(1+t^\alpha)^2}S_tf(x)dt=\int_0^\infty \frac{d}{dt}\left(\frac{1}{1+t^{-\alpha}}\right)S_tf(x)dt\\
&=\left.\frac{S_tf(x)}{1+t^{-\alpha}}\right|_0^\infty+\int_0^\infty \frac{1}{1+t^{-\alpha}}\left(-t\frac{d}{dt}S_tf(x)\right)\frac{dt}{t}\\
&=\int_0^\infty \frac{1}{1+t^{-\alpha}}Q_tf(x)\frac{dt}{t}\end{align*}
where we have used $S_tf\imply f$ when $t\imply 0$ and $S_tf\imply 0$ when $t\imply\infty$.

On the other hand, we obtain
\begin{align*}D_\alpha f(x) &=\int_X n_\alpha(x,y)(f(x)-f(y))dm(y)\\
&=\int_X\int_0^\infty \alpha t^{-\alpha-1}s(x,y,t)(f(x)-f(y))dtdm(y)\\
&=\int_0^\infty \alpha t^{-\alpha-1}(f(x)-S_tf(x))dt=\int_0^\infty \frac{d}{dt}\left(t^{-\alpha}\right)(S_tf(x)-f(x))dt\\
&=\left.\frac{(S_tf(x)-f(x))}{t^\alpha}\right|_0^\infty+\int_0^\infty t^{-\alpha}\left(-t\frac{d}{dt}S_tf(x)\right)\frac{dt}{t}\\
&=\int_0^\infty t^{-\alpha}Q_tf(x)\frac{dt}{t},\end{align*}
where we have used that $S_tf\imply 0$ when $t\imply\infty$ and that $|S_tf(x)-f(x)|\leq Ct^\gamma$. Since we also have
\begin{align*}f(x)=-\int_0^\infty\frac{d}{dt}S_tf(x)dt=\int_0^\infty Q_tf(x)\frac{dt}{t},\end{align*}
we get
\begin{align*}(I+ D_\alpha)f(x)=\int_0^\infty (1+t^{-\alpha})Q_tf(x)\frac{dt}{t}.\end{align*}

This way,
\begin{align*}(I+D_\alpha) J_\alpha f=\int_0^\infty\int_0^\infty\frac{1+s^{-\alpha}}{1+t^{-\alpha}}Q_sQ_tf\frac{dt}{t}\frac{ds}{s},\end{align*}
and as we also have
\begin{align*}f=\int_0^\infty\int_0^\infty Q_sQ_tf\frac{dt}{t}\frac{ds}{s},\end{align*}
we conclude
\begin{align*}(I-(I+D_\alpha)J_\alpha)f &=\int_0^\infty\int_0^\infty \left(1-\frac{1+s^{-\alpha}}{1+t^{-\alpha}}\right) Q_sQ_tf\frac{dt}{t}\frac{ds}{s}\\
&=\int_0^\infty\int_0^\infty \frac{t^{-\alpha}-s^{-\alpha}}{1+t^{-\alpha}} Q_sQ_tf\frac{dt}{t}\frac{ds}{s}\\
&=\int_0^\infty (1-v^\alpha)\left(\int_0^\infty \frac{1}{1+(uv)^\alpha}Q_uQ_{uv}f\frac{du}{u}\right)\frac{dv}{v}.\end{align*}

For each $v>0$ we define
\begin{align*}T_{\alpha,v}f=\int_0^\infty \frac{1}{1+(uv)^\alpha}Q_uQ_{uv}f\frac{du}{u},\end{align*}
and, following \cite{Hz}, if we can prove
\begin{align*}\|T_{\alpha,v}f\|_p\leq C_{\alpha,p}(v)\|f\|_p,\end{align*}
with
\begin{align*}\int_0^\infty |1-v^\alpha| C_{\alpha,p}(v)\frac{dv}{v}<1\end{align*}
for $\alpha$ small enough, we will obtain
\begin{align*}\|(I-(I+D_\alpha) J_\alpha)f\|_p\leq \int_0^\infty |1-v^\alpha|\|T_{v,\alpha}f\|_p\frac{dv}{v}< \|f\|_p\end{align*}
and therefore $(I+D_\alpha) J_\alpha$ will be inversible for those values of $\alpha$.

To prove the boundedness of $T_{\alpha,v}$, we will use the $T1$ theorem as presented in \ref{tedeuno}. As a first step, we need to show $T_{\alpha,v}$ is a singular integral operator, for which we need to find its kernel.

\begin{lem}For $u,v>0$, $x,z\in X$,
\begin{align*}\left|\int_X q(x,y,u)q(y,z,uv)dm(y)\right|\leq C\left(v\yy\frac{1}{v^{N+1}}\right)\frac{1}{u^N}\chi_{\left(\frac{d(x,z)}{4(v+1)},\infty\right)}(u).\end{align*}
As a consequence,
\begin{align*}\left|\int_0^\infty \frac{1}{1+(uv)^\alpha}\int_X q(x,y,u)q(y,z,uv)dm(y)\frac{du}{u}\right|\leq C\left(v\yy\frac{1}{v}\right) \frac{1}{d(x,z)^N}.\end{align*}
\end{lem}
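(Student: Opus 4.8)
The plan is to prove the pointwise kernel bound first and then integrate it against $\frac{1}{1+(uv)^\alpha}\frac{du}{u}$. The cutoff $\chi_{(\frac{d(x,z)}{4(v+1)},\infty)}(u)$ is forced purely by the supports: by property $4$ of $q$ the integrand vanishes unless there is a $y$ with simultaneously $d(x,y)\le 4u$ and $d(y,z)\le 4uv$, and then the triangle inequality gives $d(x,z)\le d(x,y)+d(y,z)\le 4u(1+v)$, i.e.\ $u>\frac{d(x,z)}{4(v+1)}$. From here on I assume $u$ lies in this range and only estimate the size.

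For the size I would exploit the cancellation $Q_t1\equiv0$, i.e.\ $\int_X q(x,y,t)\,dm(y)=0$, which by symmetry (property $2$) also yields $\int_X q(y,z,t)\,dm(y)=0$. Subtracting the appropriate constant-in-$y$ value of the other factor, I write
\begin{align*}\int_X q(x,y,u)q(y,z,uv)\,dm(y)=\int_X q(x,y,u)\bigl(q(y,z,uv)-q(x,z,uv)\bigr)\,dm(y),\end{align*}
and estimate with property $5$ in the form $|q(y,z,uv)-q(x,z,uv)|\le C(uv)^{-(N+1)}d(x,y)$, property $3$ ($|q(x,y,u)|\le Cu^{-N}$), and the support $d(x,y)\le 4u$ together with $\int_{B(x,4u)}d(x,y)\,dm(y)\sim u^{N+1}$. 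This produces the bound $C\,v^{-(N+1)}u^{-N}$. Running the symmetric argument — peeling the cancellation off the factor $q(y,z,uv)$ instead, via $\int_X q(y,z,uv)\,dm(y)=0$, and using $|q(x,y,u)-q(x,z,u)|\le Cu^{-(N+1)}d(y,z)$ with $d(y,z)\le 4uv$ and $\int_{B(z,4uv)}d(y,z)\,dm(y)\sim (uv)^{N+1}$ — produces the complementary bound $C\,v\,u^{-N}$. Since both hold for every $v>0$, their minimum gives the claimed factor $v\yy v^{-(N+1)}$, and together with the support cutoff this is exactly the first inequality.

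For the consequence I would insert the pointwise bound into the $u$-integral, use $\frac{1}{1+(uv)^\alpha}\le1$, and compute the tail
\begin{align*}\int_{\frac{d(x,z)}{4(v+1)}}^\infty u^{-N}\,\frac{du}{u}=\frac{1}{N}\left(\frac{4(v+1)}{d(x,z)}\right)^{N},\end{align*}
obtaining the bound $C\,(v\yy v^{-(N+1)})(1+v)^N\,d(x,z)^{-N}$. The proof then closes with the elementary inequality $(v\yy v^{-(N+1)})(1+v)^N\le C\,(v\yy v^{-1})$, verified on $v\le1$ (where the left factor equals $v$ and $(1+v)^N\le 2^N$) and on $v>1$ (where it equals $v^{-(N+1)}$ and $(1+v)^N\le 2^Nv^N$, so the product is $\le 2^N v^{-1}$).

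The main obstacle is recognizing that the cancellation must be applied \emph{separately} in each of the two variables to produce the two competing bounds $v\,u^{-N}$ and $v^{-(N+1)}u^{-N}$, whose minimum yields the correct factor $v\yy v^{-(N+1)}$; a single application gives only one side and misses the decay for either small or large $v$. Everything else is a routine combination of the size, smoothness, and cancellation properties of $q$ with the integrability estimate for the distance function.
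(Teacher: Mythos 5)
Your proposal is correct and follows essentially the same route as the paper: the support cutoff via the triangle inequality, then the cancellation $\int_X q(\cdot,\cdot,t)\,dm=0$ peeled off in each variable separately to produce the competing bounds $C\,v\,u^{-N}$ and $C\,v^{-(N+1)}u^{-N}$ (the paper organizes this as a case split $v\geq 1$ versus $v<1$ rather than taking a minimum, which is the same thing), followed by integrating the tail in $u$. Your explicit verification of the ``immediate'' second inequality, including the elementary estimate $\left(v\yy v^{-(N+1)}\right)(1+v)^N\leq C\left(v\yy v^{-1}\right)$, is a correct filling-in of what the paper leaves to the reader.
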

\begin{proof}The second inequality follows immediately from the first one. For this one, as
\begin{align*}q(x,y,u)=0 \text{ when } d(x,y)\geq 4u; \quad q(y,z,uv)=0 \text{ when } d(y,z)\geq 4uv,\end{align*}
for the product to be non zero $d(x,z)<4u(v+1)$ must hold. If $v\geq 1$, as $\int_X q(x,y,u)q(x,z,uv)dm(y)=0$,
\begin{align*}\left|\int_X q(x,y,u)q(y,z,uv)dm(y)\right| &=\\
&\hspace{-1cm}=\left|\int_X q(x,y,u)(q(y,z,uv)-q(x,z,uv))dm(y)\right|\\
&\hspace{-1cm}\leq C\int_{B(x,4u)}\frac{1}{u^N}\frac{d(x,y)}{(uv)^{N+1}}dm(y)\leq C\frac{1}{u^N}\frac{1}{v^{N+1}};\end{align*}
and if $v<1$, as $\int_X q(x,z,u)q(y,z,uv)dm(y)=0$,
\begin{align*}\left|\int_X q(x,y,u)q(y,z,uv)dm(y)\right| &=\\
&\hspace{-1cm}=\left|\int_X (q(x,y,u)-q(x,z,u))q(y,z,uv)dm(y)\right|\\
&\hspace{-1cm}\leq C\int_{B(z,4uv)}\frac{d(y,z)}{u^{N+1}}\frac{1}{(uv)^N}dm(y)\leq C\frac{1}{u^N}v.\end{align*}
\end{proof}

Let now $f,g\in C^\beta_c$ with disjoint supports, and let $x\in supp(g)$. Then
\begin{align*}T_{\alpha,v}f(x) &=\int_0^\infty \frac{1}{1+(uv)^\alpha}Q_uQ_{uv}f\frac{du}{u}\\
&\hspace{-0.75cm}=\int_0^\infty \frac{1}{1+(uv)^\alpha}\left(\int_X q(x,y,u)\left(\int_X q(y,z,uv)f(z)dm(z)\right)dm(y)\right)\frac{du}{u}\end{align*}
and from the previous lemma we have this integral converges absolutely, so we can change the order of integration and obtain
\begin{align*}\left\langle T_{\alpha,v}f,g\right\rangle=\int_X\int_X N_{\alpha,v}(x,z)f(z)g(x)dm(z)dm(x),\end{align*}
where
\begin{align*}N_{\alpha,v}(x,z)=\int_0^\infty \frac{1}{1+(uv)^\alpha}\int_X q(x,y,u)q(y,z,uv)dm(y)\frac{du}{u}.\end{align*}

From the previous lemma, $N_{\alpha,v}(x,z)\leq C\left(v\yy \frac{1}{v}\right)\frac{1}{d(x,z)^N}$. To see that $T_{\alpha,v}$ is a singular integral operator we need to check the smoothness conditions for the kernel $N_{\alpha,v}$.

\begin{lem}For $u,v>0$, $x,x',z\in X$ and $0<\delta<1$, it holds
\begin{align*}\left|\int_X (q(x,y,u)-q(x',y,u))q(y,z,uv)dm(y)\right| &\leq\\
&\hspace{-5cm}\leq C\left(\frac{d(x,x')}{u}\right)^{1-\delta}\left(v^\delta\yy \frac{1}{v^{N+1}}\right)\frac{1}{u^N}\chi_{\left(\frac{d(x,z)\yy d(x',z)}{4(v+1)},\infty\right)}(u).\end{align*}
From this we obtain
\begin{align*}\left|\int_0^\infty \frac{1}{1+(uv)^\alpha}\int_X (q(x,y,u)-q(x',y,u))q(y,z,uv)dm(y)\frac{du}{u}\right| &\leq\\
&\hspace{-7cm}\leq C\frac{d(x,x')^{1-\delta}}{(d(x,z)\yy d(x',z))^{N+1-\delta}}\left(v\yy \frac{1}{v}\right)^\delta.\end{align*}
\end{lem}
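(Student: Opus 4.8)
The plan is to follow the proof of the preceding lemma almost verbatim, the only new ingredient being that the first-order Lipschitz smoothness of $q$ (property 5) must be softened to a fractional smoothness of order $1-\delta$ by interpolating it against the size bound (property 3). Concretely, from $|q(x,y,u)-q(x',y,u)|\leq 2Cu^{-N}$ and $|q(x,y,u)-q(x',y,u)|\leq Cd(x,x')u^{-(N+1)}$ I would raise the second to the power $1-\delta$ and the first to the power $\delta$ to obtain
\[
|q(x,y,u)-q(x',y,u)|\leq C\left(\frac{d(x,x')}{u}\right)^{1-\delta}\frac{1}{u^N},\qquad 0<\delta<1.
\]
The characteristic function is read off from the support of the genuine integrand exactly as in the previous lemma: $q(y,z,uv)\neq0$ forces $d(y,z)<4uv$ (property 4), while $q(x,y,u)-q(x',y,u)\neq0$ forces $d(x,y)<4u$ or $d(x',y)<4u$; combining these yields $d(x,z)\wedge d(x',z)<4u(v+1)$, so the integral vanishes unless $u>\frac{d(x,z)\wedge d(x',z)}{4(v+1)}$.

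Next I would split on the size of $d(x,x')$. If $d(x,x')\geq u$ the factor $(d(x,x')/u)^{1-\delta}$ is $\geq1$, so the triangle inequality together with the previous lemma applied separately to $x$ and to $x'$ already gives the claim, using $\chi_{(a,\infty)}+\chi_{(b,\infty)}\leq 2\chi_{(a\wedge b,\infty)}$ and the elementary inequality $v\wedge v^{-(N+1)}\leq v^\delta\wedge v^{-(N+1)}$. So assume $d(x,x')<u$ and treat the two ranges of $v$ separately. For $v\geq1$ I would use the cancellation $\int(q(x,y,u)-q(x',y,u))\,dm(y)=0$ (property 1) to subtract the constant $q(x,z,uv)$, and then estimate $|q(y,z,uv)-q(x,z,uv)|\leq Cd(x,y)(uv)^{-(N+1)}$ (property 5); since $d(x,x')<u$ the integrand is supported in $B(x,5u)$ where $d(x,y)<5u$, and integrating the interpolated bound over a ball of measure $\sim u^N$ produces the factor $v^{-(N+1)}$.

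The delicate case is $v<1$, and this is the step I expect to be the main obstacle. Here I would use the other cancellation $\int q(y,z,uv)\,dm(y)=0$ (properties 1 and 2) to subtract $q(x,z,u)-q(x',z,u)$ from the first factor, reducing the kernel to the double difference
\[
\Delta=\big[q(x,y,u)-q(x,z,u)\big]-\big[q(x',y,u)-q(x',z,u)\big].
\]
Ordinary first-order smoothness is not enough; the trick is that $\Delta$ admits two inequivalent first-order bounds from property 5 — grouping the terms in the first variable gives $|\Delta|\leq Cd(x,x')u^{-(N+1)}$, while grouping them in the middle variable (after using symmetry) gives $|\Delta|\leq Cd(y,z)u^{-(N+1)}$ — so interpolating between them yields $|\Delta|\leq Cd(x,x')^{1-\delta}d(y,z)^{\delta}u^{-(N+1)}$. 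On the support $d(y,z)<4uv$, hence $|\Delta|\leq C(d(x,x')/u)^{1-\delta}v^{\delta}u^{-N}$, and since $\int_{B(z,4uv)}|q(y,z,uv)|\,dm(y)\leq C$ the factor $v^\delta$ follows. Together with the $v\geq1$ case this gives $v^\delta\wedge v^{-(N+1)}$ and completes the first inequality.

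Finally, for the consequence I would insert the pointwise bound into the $u$-integral, use $(1+(uv)^\alpha)^{-1}\leq1$, and integrate from $a=\frac{d(x,z)\wedge d(x',z)}{4(v+1)}$ to $\infty$ via $\int_a^\infty u^{-(N+2-\delta)}\,du=\frac{a^{-(N+1-\delta)}}{N+1-\delta}$. This produces $C\,d(x,x')^{1-\delta}(d(x,z)\wedge d(x',z))^{-(N+1-\delta)}(v+1)^{N+1-\delta}$ times $v^\delta\wedge v^{-(N+1)}$, and it only remains to verify the elementary inequality $(v^\delta\wedge v^{-(N+1)})(1+v)^{N+1-\delta}\leq C(v\wedge v^{-1})^\delta$ by checking $v\leq1$ and $v\geq1$ separately. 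Everything outside the $v<1$ double-difference interpolation is routine and parallels the previous lemma.
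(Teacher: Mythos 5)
Your proof is correct, and its skeleton matches the paper's: read the support condition off properties 4 (giving the characteristic function with threshold $\frac{d(x,z)\wedge d(x',z)}{4(v+1)}$), dispose of $d(x,x')\geq u$ trivially by the previous lemma since $(d(x,x')/u)^{1-\delta}\geq 1$, and for $v\geq 1$, $d(x,x')<u$ use the cancellation $\int_X\bigl(q(x,y,u)-q(x',y,u)\bigr)dm(y)=0$ to subtract $q(x,z,uv)$ exactly as the paper does. The one place you genuinely diverge is the case you correctly identify as delicate, $v<1$: you invoke the second cancellation $\int_X q(y,z,uv)\,dm(y)=0$ to form the double difference $\Delta$ and interpolate pointwise between the two groupings, $|\Delta|\leq C\,d(x,x')^{1-\delta}d(y,z)^{\delta}u^{-(N+1)}$, then use the support $d(y,z)<4uv$ and the $L^1$ bound on $q(\cdot,z,uv)$. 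The paper does something simpler there and uses no cancellation at all: it records two complete bounds on the integral, $Cv\,u^{-N}\chi$ (from the previous lemma applied termwise) and $C\frac{d(x,x')}{u}u^{-N}\chi$ (Lipschitz smoothness in $x$ against $\int_X|q(y,z,uv)|dm(y)\leq C$), and combines them via $\min(A,B)\leq A^{\delta}B^{1-\delta}$ to get $Cv^{\delta}\bigl(\frac{d(x,x')}{u}\bigr)^{1-\delta}u^{-N}\chi$, valid for all $d(x,x')$ without any case split. So your claim that ``ordinary first-order smoothness is not enough'' is not quite right as a diagnosis — two first-order bounds geometrically averaged suffice — though your double-difference route is perfectly valid and has the mild virtue of producing the interpolated estimate pointwise inside the integral. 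Your derivation of the second inequality (integrating $u^{-(N+2-\delta)}$ from the threshold and checking $(v^{\delta}\wedge v^{-(N+1)})(1+v)^{N+1-\delta}\leq C(v\wedge v^{-1})^{\delta}$ on $v\leq 1$ and $v\geq 1$ separately) correctly fills in a step the paper leaves implicit.
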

\begin{proof}As in the other lemma, the second inequality follows from the first one. We consider two cases:
If $v\geq1$ y $d(x,x')\geq u$, by that same lemma,
\begin{align*}\left|\int_X (q(x,y,u)-q(x',y,u))q(y,z,uv)dm(y)\right| &\leq\\
&\hspace{-3cm}\leq C\frac{1}{v^{N+1}}\frac{1}{u^N}\left(\chi_{\left(\frac{d(x,z)}{4(v+1)},\infty\right)}(u)+\chi_{\left(\frac{d(x',z)}{4(v+1)},\infty\right)}(u)\right)\\
&\hspace{-3cm}\leq C\frac{1}{v^{N+1}}\frac{1}{u^N}\chi_{\left(\frac{d(x,z)\yy d(x',z)}{4(v+1)},\infty\right)}(u)\\
&\hspace{-3cm}\leq C\frac{1}{v^{N+1}}\frac{1}{u^N}\chi_{\left(\frac{d(x,z)\yy d(x',z)}{4(v+1)},\infty\right)}(u) \left(\frac{d(x,x')}{u}\right)^{1-\delta}.\end{align*}
And for $d(x,x')<u$, the integrand will be nonzero only if $d(x,z)<4u(v+1)$ or $d(x',z)<4u(v+1)$, so
\begin{align*}\left|\int_X (q(x,y,u)-q(x',y,u))q(y,z,uv)dm(y)\right|=\\
&\hspace{-6cm}=\left|\int_X (q(x,y,u)-q(x',y,u))(q(y,z,uv)-q(x,z,uv))dm(y)\right|\\
&\hspace{-6cm}\leq Cd(x,x')\frac{1}{u^{N+1}}\frac{1}{(uv)^{N+1}}\int_{B(x,4u)\cup B(x',4u)}d(x,y)dm(y)\\
&\hspace{-6cm}\leq C\frac{1}{v^{N+1}}\frac{1}{u^N}\chi_{\left(\frac{d(x,z)\yy d(x',z)}{4(v+1)},\infty\right)}(u) \left(\frac{d(x,x')}{u}\right)\\
&\hspace{-6cm}\leq C\frac{1}{v^{N+1}}\frac{1}{u^N}\chi_{\left(\frac{d(x,z)\yy d(x',z)}{4(v+1)},\infty\right)}(u) \left(\frac{d(x,x')}{u}\right)^{1-\delta}.\end{align*}
For the case $v<1$, on one hand by the previous lemma we obtain
\begin{align*}\left|\int_X (q(x,y,u)-q(x',y,u))q(y,z,uv)dm(y)\right|\leq\\
&\hspace{-1cm}\leq Cv\frac{1}{u^N}\chi_{\left(\frac{d(x,z)\yy d(x',z)}{4(v+1)},\infty\right)}(u),\end{align*}
on the other hand
\begin{align*}\left|\int_X (q(x,y,u)-q(x',y,u))q(y,z,uv)dm(y)\right|\leq\\
&\hspace{-2cm}\leq C \frac{d(x,x')}{u}\frac{1}{u^N}\chi_{\left(\frac{d(x,z)\yy d(x',z)}{4(v+1)},\infty\right)}(u),\end{align*}
and by combining both inequalities we get
\begin{align*}\left|\int_X (q(x,y,u)-q(x',y,u))q(y,z,uv)dm(y)\right| &\leq\\
&\hspace{-2cm}\leq C v^\delta \left(\frac{d(x,x')}{u}\right)^{1-\delta}\frac{1}{u^N}\chi_{\left(\frac{d(x,z)\yy d(x',z)}{4(v+1)},\infty\right)}(u).\end{align*}
\end{proof}

For the rest of the section, we fix $0<\delta<1$. Joining both lemmas we conclude

\begin{thm}\label{singint}$T_{\alpha,v}$ is a singular integral operator. Its kernel $N_{\alpha,v}$ satisfies
\begin{align*}|N_{\alpha,v}(x,z)|\leq C\left(v\yy\frac{1}{v}\right)^\delta \frac{1}{d(x,z)^N};\end{align*}
and for $3d(x,x')<d(x,z)$,
\begin{align*}|N_{\alpha,v}(x,z)-N_{\alpha,v}(x',z)|\leq C\left(v\yy\frac{1}{v}\right)^\delta\frac{d(x,x')^{1-\delta}}{d(x,z)^{N+1-\delta}}\end{align*}
and
\begin{align*}|N_{\alpha,v}(z,x)-N_{\alpha,v}(z,x')|\leq C\left(v\yy\frac{1}{v}\right)^\delta\frac{d(x,x')^{1-\delta}}{d(x,z)^{N+1-\delta}}.\end{align*}
\end{thm}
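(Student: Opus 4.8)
The kernel representation of $T_{\alpha,v}$ is already in hand: the pairing computed just before the statement shows that for $f,g\in C_c^\beta$ with disjoint supports one has $\langle T_{\alpha,v}f,g\rangle=\iint N_{\alpha,v}(x,z)f(z)g(x)\,dm(z)dm(x)$, and the continuity of $N_{\alpha,v}$ off the diagonal is inherited from the joint continuity of $q$ together with the absolute convergence established in the first lemma. Hence the whole theorem reduces to verifying the three displayed estimates on $N_{\alpha,v}$; once these hold $N_{\alpha,v}$ is a standard kernel and $T_{\alpha,v}$ is, by definition, a singular integral operator.

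The size bound is immediate from the consequence half of the first lemma, which already yields $|N_{\alpha,v}(x,z)|\le C(v\yy\tfrac1v)d(x,z)^{-N}$; since $v\yy\tfrac1v=\min(v,1/v)\le 1$ and $0<\delta<1$, we have $(v\yy\tfrac1v)\le(v\yy\tfrac1v)^\delta$, so the stated (weaker) bound with exponent $\delta$ follows at once. For the first-variable smoothness I would simply invoke the consequence half of the second lemma, which controls $|N_{\alpha,v}(x,z)-N_{\alpha,v}(x',z)|$ by $C(v\yy\tfrac1v)^\delta d(x,x')^{1-\delta}(d(x,z)\yy d(x',z))^{-(N+1-\delta)}$; the hypothesis $3d(x,x')<d(x,z)$ forces $d(x',z)\ge d(x,z)-d(x,x')>\tfrac23 d(x,z)$ by the triangle inequality, so that $d(x,z)\yy d(x',z)\sim d(x,z)$ and the exponent can be transferred onto $d(x,z)$.

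The real work is the second-variable smoothness, because both lemmas difference only the first factor $q(x,y,u)$, i.e. the factor living at scale $u$, whereas here the increment falls on the factor $q(y,x,uv)$ living at scale $uv$. The plan is to symmetrize and then re-index the scales. Using $q(a,b,t)=q(b,a,t)$ I rewrite $N_{\alpha,v}(z,x)-N_{\alpha,v}(z,x')=\int_0^\infty\frac{1}{1+(uv)^\alpha}\int_X (q(x,y,uv)-q(x',y,uv))q(y,z,u)\,dm(y)\frac{du}{u}$, so that the increment now sits on the scale-$uv$ factor while the plain factor sits at scale $u$. Setting $s=uv$ (so that the plain factor is at scale $s\cdot\tfrac1v$) casts the inner integral into exactly the form treated in the second lemma, but with the ratio $v$ replaced by $1/v$. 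Applying that lemma, bounding the weight $\frac{1}{1+(uv)^\alpha}\le 1$, and integrating in $u$ as in the consequence computation reproduces the same bound, the factor $(v\yy\tfrac1v)^\delta$ reappearing because $\big((\tfrac1v)^\delta\yy v^{N+1}\big)(1+\tfrac1v)^{N+1-\delta}\sim(v\yy\tfrac1v)^\delta$ for both $v\ge1$ and $v<1$; a final use of $3d(x,x')<d(x,z)$ again replaces $d(x,z)\yy d(x',z)$ by $d(x,z)$.

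I expect the second-variable estimate to be the main obstacle, since it is the only place where one cannot quote a lemma verbatim: the symmetry-plus-scale-inversion device (replacing $v$ by $1/v$) is precisely what makes the asymmetric roles of the two scales $u$ and $uv$ cancel into the symmetric weight $(v\yy\tfrac1v)^\delta$. Everything else is bookkeeping with the two lemmas and the geometric comparison $d(x',z)\sim d(x,z)$.
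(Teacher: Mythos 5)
Your proposal is correct and takes essentially the same route as the paper, whose entire proof is the remark ``Joining both lemmas we conclude'': the size bound via $(v\yy\frac{1}{v})\leq(v\yy\frac{1}{v})^\delta$ and the first-variable smoothness via $d(x',z)>\frac{2}{3}d(x,z)$ are read off the two lemmas exactly as you do. Your explicit handling of the second-variable estimate --- symmetrizing with $q(a,b,t)=q(b,a,t)$, substituting $s=uv$ so the second lemma applies with ratio $1/v$ (the weight only ever being bounded by $1$), and checking $\left(\left(\tfrac{1}{v}\right)^\delta\yy v^{N+1}\right)\left(1+\tfrac{1}{v}\right)^{N+1-\delta}\sim\left(v\yy\tfrac{1}{v}\right)^\delta$ --- correctly supplies the one step the paper leaves implicit.
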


To prove each $T_{\alpha,v}$ is a Calderón-Zygmund operator, and thus bounded in $L^p$, we will use the $T1$ theorem. The next lemma proves the other conditions needed.

\begin{lem}\label{weak}$T_{\alpha,v}$ satisfies
\begin{align*}T_{\alpha,v}1=0,\end{align*}
\begin{align*}T_{\alpha,v}^*1=0,\end{align*}
and for $f,g\in C^\beta_c(B)$, for some ball $B$,
\begin{align*}|\langle T_{\alpha,v}f,g\rangle|\leq C \left(v\yy\frac{1}{v}\right)^\delta m(B)^{1+\frac{2\beta}{N}}[f]_\beta[g]_\beta.\end{align*}
\end{lem}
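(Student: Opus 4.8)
The two cancellation identities are immediate. Since $Q_t1=0$ for every $t>0$, and since $Q_t$ is self-adjoint (its kernel $q$ is symmetric), I may write $T_{\alpha,v}^*f=\int_0^\infty\frac{1}{1+(uv)^\alpha}Q_{uv}Q_uf\frac{du}{u}$. Then $T_{\alpha,v}1=\int_0^\infty\frac{1}{1+(uv)^\alpha}Q_u(Q_{uv}1)\frac{du}{u}=0$ and $T_{\alpha,v}^*1=\int_0^\infty\frac{1}{1+(uv)^\alpha}Q_{uv}(Q_u1)\frac{du}{u}=0$, the inner operator annihilating constants in each case.

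For the weak bound, fix a ball $B=B(x_0,r)$ and $f,g\in C^\beta_c(B)$. Using self-adjointness of $Q_u$ I rewrite
\[\langle T_{\alpha,v}f,g\rangle=\int_0^\infty\frac{1}{1+(uv)^\alpha}\langle Q_{uv}f,Q_ug\rangle\,\frac{du}{u},\]
and the plan is to estimate $\langle Q_{uv}f,Q_ug\rangle$ for each $u$ and then integrate in $u$. The crucial input is a pair of pointwise bounds on $Q_th$ for $h\in C^\beta_c(B)$. From $Q_t1=0$, the bound $|q(y,z,t)|\le Ct^{-N}$, the support condition $q(\cdot,\cdot,t)=0$ for $d\ge4t$, and the Lipschitz-$\beta$ hypothesis one gets the small-scale estimate $|Q_th(y)|\le C[h]_\beta t^\beta$; from $|Q_th(y)|\le Ct^{-N}\|h\|_1$ together with $\|h\|_1\le C[h]_\beta r^{\beta+N}$ (valid because $h$ vanishes off a compact subset of $B$, whence $\|h\|_\infty\le C[h]_\beta r^\beta$) one gets the large-scale estimate $|Q_th(y)|\le C[h]_\beta r^{\beta+N}t^{-N}$. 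In both regimes $Q_th$ is supported in $B(x_0,r+4t)$.

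Combining these and using $\frac{1}{1+(uv)^\alpha}\le1$ (so the factor $\alpha$ plays no role here, as needed for a range of $\alpha$), I bound $|\langle Q_{uv}f,Q_ug\rangle|$ by the product of the two pointwise bounds (at scales $uv$ and $u$) times the measure of the overlap of the two supports, which is $\sim\big(r+4u\,(1\yy v)\big)^N$, thereby reducing everything to a single integral $\int_0^\infty(\cdots)\frac{du}{u}$. The main work, and the main obstacle, is the bookkeeping of this integral across the natural breakpoints $u\sim r$ and $uv\sim r$, i.e. a case split on $v\gtrless 1$ and on whether each of $u,uv$ lies below or above $r$; in each region the two minima select the correct branch, the integral converges (the cancellation branch handles $u\to0$, the decay branch handles $u\to\infty$), and summing the contributions produces $Cr^{N+2\beta}$ times a positive power of $v\yy\frac1v$. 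Since $m(B)\sim r^N$, the spatial factor is exactly $m(B)^{1+2\beta/N}$; and because $v\yy\frac1v\le1$, the resulting $v$-power (whose dominant exponent equals the test-function regularity $\beta$) is dominated by $\big(v\yy\frac1v\big)^\delta$. Verifying that the exponents collapse precisely to $m(B)^{1+2\beta/N}\big(v\yy\frac1v\big)^\delta$ is the only delicate point.
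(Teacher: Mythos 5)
Your cancellation identities are fine and are exactly the paper's argument ($Q_{uv}1=0$, plus symmetry of $q$ for the adjoint). Your overall skeleton for the weak bound is also a legitimate reorganization of the paper's: instead of pairing $Q_{uv}f$ with $Q_ug$, the paper estimates the triple integral $\iiint q(x,y,u)q(y,z,uv)f(z)g(x)\,dm(y)\,dm(z)\,dm(x)$ directly, but your small-scale bound $|Q_th|\leq C[h]_\beta t^\beta$ and large-scale bound $|Q_th|\leq C[h]_\beta r^{\beta+N}t^{-N}$ correspond to the paper's second and third estimates.

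The gap is in the final step, and it is fatal as written. Carry out your case analysis for $v\leq 1$: the region $u\leq r$ contributes $C[f]_\beta[g]_\beta r^{N+2\beta}v^\beta$; the region $r\leq u\leq r/v$ contributes $v^\beta r^{N+2\beta}$ only when $\beta<N$, and $v^N$ (or $v^\beta\log(1/v)$ at $\beta=N$) otherwise; and the region $uv\geq r$, where both factors are in the decay branch and the overlap has measure $\sim(uv)^N$, contributes $C[f]_\beta[g]_\beta r^{N+2\beta}v^N$. So your method yields exponent $\beta\wedge N$, not $\beta$ --- and, decisively, since $v\wedge\frac1v\leq 1$, the inequality $(v\wedge\frac1v)^a\leq(v\wedge\frac1v)^\delta$ requires $a\geq\delta$, so your closing sentence has the direction of comparison backwards: the bound you obtain implies the stated one only when $\beta\wedge N\geq\delta$. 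The paper needs the lemma for an arbitrary fixed $\delta<1$ (ultimately $\delta$ close to $1$, so that $(I+D_\alpha)J_\alpha$ is controlled for all $\alpha<\delta$), and $N<1$ is a genuine possibility in Ahlfors spaces (Cantor-type sets), so the cap at $\beta\wedge N$ is a real loss, not bookkeeping. The missing ingredient is that your two pointwise bounds on $Q_th$ use only the cancellation $Q_t1=0$ against the test functions' smoothness; they never exploit the Lipschitz regularity of the kernel $q$ in its first variable. That regularity is what powers the paper's \emph{first} estimate, via the earlier lemma $\bigl|\int_X q(x,y,u)q(y,z,uv)\,dm(y)\bigr|\leq C\bigl(v\wedge v^{-(N+1)}\bigr)u^{-N}\chi$, which gives a bound on the triple integral that is uniform in $u$ and carries a full factor $v\wedge\frac1v$. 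The paper then takes a geometric mean: the $v$-decaying uniform bound to the power $\delta$, and the two $u$-integrable bounds (your branches) to the power $1-\delta$, so the $du/u$ integral converges while retaining exactly $(v\wedge\frac1v)^\delta$. Replacing your per-region single-branch selection by this interpolation repairs the argument; without it, the exponents do not ``collapse precisely'' to the claimed form.
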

\begin{proof}The first equality is immediate, the second uses the fact that $q$ is symmetrical.
\begin{align*}\langle T_{\alpha,v}f,g\rangle &= \int_X\left(\int_X N_{\alpha,v}(x,z)f(z)dm(z)\right)g(x)dm(x)\\
&=\int_X \int_X\int_0^\infty \int_X \frac{1}{1+(uv)^\alpha}\\
&\hspace{1cm}\times q(x,y,u)q(y,z,uv)dm(y)\frac{du}{u}f(z)dm(z)g(x)dm(x)\\
&=\int_Xf(z)\left(\int_X N^*_{\alpha,v}(z,x)g(x)dm(x)\right)dm(z)=\langle f,T_{\alpha,v}^*g\rangle\end{align*}
so clearly $T_{\alpha,v}^*1=0$.

For the third one, as
\begin{align*}\left\langle T_{\alpha,v}f,g \right\rangle &=\\
&\hspace{-1.5cm}=\int\limits_0^\infty\frac{1}{1+(uv)^\alpha}\int_X\int_X\int_X q(x,y,u)q(y,z,uv)f(z)g(x)dm(y)dm(z)dm(x)\frac{du}{u}\end{align*}
we observe that the triple integral inside may be estimated in three different ways
\begin{itemize}
	\item Firstly,
	\begin{align*}A &=\left|\int_X\int_X\int_X q(x,y,u)q(y,z,uv)f(z)g(x)dm(y)dm(z)dm(x)\right|\\
	&\leq C\|f\|_\infty\|g\|_\infty\left(v\yy\frac{1}{v^{N+1}}\right)\frac{1}{u^N}\int_B\int_B \chi_{B(x,4u(v+1))}(z)dm(z)dm(x)\\
	&\leq C[f]_\beta[g]_\beta m(B)^{2\beta/N}\left(v\yy\frac{1}{v^{N+1}}\right)m(B)(v+1)^N\\
	&\leq C\left(v\yy\frac{1}{v}\right)[f]_\beta[g]_\beta m(B)^{1+2\beta/N}.\end{align*}
\item Secondly, using the fact that $\int_X q(x,y,u)q(y,z,uv)f(y)g(x)dm(z)=0$,
	\begin{align*} A&=\left|\int_X\int_X\int_X q(x,y,u)q(y,z,uv)(f(z)-f(y))g(x)dm(z)dm(y)dm(x)\right|\\
	&\leq C[f]_\beta\|g\|_\infty \int_B\fint_{B(x,4u)}\fint_{B(y,4uv)}d(z,y)^\beta dm(z)dm(y)dm(x)\\
	&\leq C[f]_\beta[g]_\beta m(B)^{1+\beta/N}(uv)^\beta\\
	&\leq C\left(\frac{uv}{m(B)^{1/N}}\right)^\beta [f]_\beta[g]_\beta m(B)^{1+2\beta/N}.\end{align*}
\item And lastly, it also holds
	\begin{align*}A &\leq C\|f\|_\infty\|g\|_\infty \frac{m(B)^2}{(uv)^N}\\
	&\leq C\left(\frac{uv}{m(B)^{1/N}}\right)^{-N} [f]_\beta[g]_\beta m(B)^{1+2\beta/N}.\end{align*}
\end{itemize}

By taking an appropriate combination of the previous three inequalities, we have
	\begin{align*}A &=\left|\int_X\int_X\int_X q(x,y,u)q(y,z,uv)f(z)g(x)dm(y)dm(z)dm(x)\right|\\
	&\leq C\left(v\yy\frac{1}{v}\right)^\delta \left(\left(\frac{uv}{m(B)^{1/N}}\right)^\beta\yy \left(\frac{uv}{m(B)^{1/N}}\right)^{-N}\right)^{1-\delta}[f]_\beta[g]_\beta m(B)^{1+2\beta/N},\end{align*}
and conclude
	\begin{align*}\left|\left\langle T_{\alpha,v}f,g \right\rangle\right|\leq C\left(v\yy\frac{1}{v}\right)^\delta [f]_\beta[g]_\beta m(B)^{1+2\beta/N}.\end{align*}
\end{proof}

Thus the $T1$ theorem holds for each $T_{\alpha,v}$, and we get

\begin{thm} For $1<p<\infty$ and $0<\delta<1$ the following holds
\begin{align*}\|T_{\alpha,v}f\|_p\leq C_p\left(v\yy \frac{1}{v}\right)^\delta\|f\|_p.\end{align*}
\end{thm}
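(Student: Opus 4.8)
The plan is to combine the three lemmas of this section with the $T1$ theorem stated as Theorem \ref{tedeuno}. Theorem \ref{singint} already establishes that $T_{\alpha,v}$ is a singular integral operator with standard kernel $N_{\alpha,v}$, where the kernel constant is $C(v\yy\frac1v)^\delta$ and the regularity exponent is $\eta=1-\delta$. Lemma \ref{weak} supplies the remaining hypotheses of the $T1$ theorem: it shows $T_{\alpha,v}1=0\in BMO$, $T_{\alpha,v}^*1=0\in BMO$, and that $T_{\alpha,v}$ is weakly bounded with weak-boundedness constant $C(v\yy\frac1v)^\delta$. So the structural work is essentially done; what remains is to apply the $T1$ theorem and track the dependence of the resulting $L^p$ bound on $v$.

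First I would invoke Theorem \ref{tedeuno} to conclude that each $T_{\alpha,v}$ extends to a Calder\'on-Zygmund operator, hence is bounded on $L^2$. Then, by the standard fact quoted in the Singular Integrals subsection (every CZO is bounded on $L^p$ for $1<p<\infty$), boundedness on $L^p$ follows. The genuinely important point, and the one requiring care, is the explicit constant: I would verify that the $L^2\to L^2$ norm produced by the $T1$ theorem depends only on the kernel constant, the weak-boundedness constant, and the $BMO$ norms of $T_{\alpha,v}1$ and $T_{\alpha,v}^*1$. Since the latter two vanish and the former two are both bounded by $C(v\yy\frac1v)^\delta$ (with $C$ independent of $v$ and $\alpha$), the $L^2$ norm is at most $C(v\yy\frac1v)^\delta$. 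The passage from $L^2$ to $L^p$ via Calder\'on-Zygmund theory preserves this linear dependence on the $L^2$ norm up to a constant $C_p$ depending only on $p$, $N$ and the kernel constant; one checks that the extra factor introduced is again controlled by $C_p(v\yy\frac1v)^\delta$, which yields exactly the claimed bound $\|T_{\alpha,v}f\|_p\leq C_p(v\yy\frac1v)^\delta\|f\|_p$.

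The main obstacle I anticipate is not any single inequality but rather the bookkeeping of the $v$-dependence through the two steps of the $T1$ machinery. The $T1$ theorem is usually stated qualitatively, so to extract a quantitative bound I must make sure that in the Ahlfors-regular version (from \cite{Ga}) the operator norm depends \emph{linearly} on the maximum of the kernel constant and the weak-boundedness constant, rather than through some uncontrolled function of them. This amounts to inspecting the proof of the $T1$ theorem and confirming it is scale-homogeneous in these quantities; since all three lemmas deliver the same factor $(v\yy\frac1v)^\delta$, the homogeneity guarantees the final constant inherits precisely one power of this factor. A secondary, purely technical point is that $T_{\alpha,v}$ is a priori defined on $C_c^\beta$ with $\beta$ satisfying $\alpha<\beta\leq 1$; I would note that the CZO extension is independent of the particular $\beta$ used, so the $L^p$ bound is unambiguous. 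With these observations the statement follows directly, and this $v$-uniform estimate is exactly what is needed so that $\int_0^\infty|1-v^\alpha|C_p(v\yy\frac1v)^\delta\frac{dv}{v}$ can be made less than $1$ for small $\alpha$, completing the inversibility argument of the section.
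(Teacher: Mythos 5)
Your proposal is correct and takes essentially the same route as the paper: the paper likewise deduces the theorem by applying the $T1$ theorem (Theorem \ref{tedeuno}) to the hypotheses supplied by Theorem \ref{singint} and Lemma \ref{weak}, and handles the quantitative $v$-dependence with the single remark that the $L^p$-constant of $T_{\alpha,v}$ is bounded by the constants appearing in those two results, as in the Euclidean case (citing \cite{Gr}). Your explicit tracking of the linear dependence of the $T1$ and Calder\'on--Zygmund bounds on the kernel constant, the weak-boundedness constant, and the (vanishing) $BMO$ norms is precisely the verification the paper delegates to that reference.
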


The fact that the $L^p$-constant of $T_{\alpha,v}$ is bounded by the constants appearing in Theorem \ref{singint} and Lemma \ref{weak} follows the same ideas that the Euclidean case (see for instance \cite{Gr}).

From this result, as for $\alpha<\delta$ we have
\begin{align*}\|I-(I+D_\alpha)J_\alpha\|_{L^p\imply L^p}\leq \int_0^\infty |1-v^\alpha|\|T_{\alpha,v}\|_{L^p\imply L^p}\frac{dv}{v}\leq C_p\frac{\alpha}{\delta^2-\alpha^2}\end{align*}
so we obtain the estimate we were looking for and we can conclude
\begin{itemize}
	\item For any $0<\alpha<1$,  $I-(I+D_\alpha)J_\alpha$, and thus $(I+D_\alpha)J_\alpha$, is bounded in $L^p$
	\item There exists $\alpha_0<1$ such that, for $\alpha<\alpha_0$,
\begin{align*}\|I-(I+D_\alpha)J_\alpha\|_{L^p\imply L^p}<1,\end{align*}
and thus $(I+D_\alpha)J_\alpha$ is inversible (with bounded inverse) in $L^p$. As $J_\alpha$ maps $L^p$ \textsl{onto} $L^{\alpha,p}$, 
\begin{align*}\left[(I+D_\alpha)J_\alpha\right]^{-1}(I+D_\alpha)J_\alpha = Id_{L^p}\end{align*}
so $J_\alpha$ is inversible with inverse $J_\alpha^{-1}:L^{\alpha,p}\imply L^p$ given by
\begin{align*}J_\alpha^{-1}=\left[(I+D_\alpha)J_\alpha\right]^{-1}(I+D_\alpha).\end{align*}
\end{itemize}

\section{A characterization of $L^{\alpha,p}$ in terms of $D_\alpha$}

For $0<\alpha<1$ and $1<p<\infty$, we proved that, if $f\in L^{\alpha,p}$, then $f\in L^p$ (this holds for any $\alpha>0$ and $1\leq p\leq\infty$) and $(I+D_\alpha)f\in L^p$, so
\begin{align*}\text{If } f\in L^{\alpha,p}\text{, then } f,D_\alpha f\in L^p,\end{align*}
moreover,
\begin{align*}\|D_\alpha f\|_p\leq C\|f\|_{\alpha,p}.\end{align*}

For the case $\alpha<\alpha_0$, we obtain the reciprocal.

\begin{thm}Let $1<p<\infty$ and $0<\alpha<\alpha_0$. Then
\begin{align*}f\in L^{\alpha,p}\text{ if and only if } f,D_\alpha f\in L^p,\end{align*}
Furthermore,
\begin{align*}\|f\|_{\alpha,p}\sim \|(I+D_\alpha)f\|_p.\end{align*}
\end{thm}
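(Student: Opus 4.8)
The forward implication, together with the inequality $\|(I+D_\alpha)f\|_p\le C\|f\|_{\alpha,p}$, has already been established in the discussion preceding the statement, so the plan is to prove the reverse implication and the remaining estimate $\|f\|_{\alpha,p}\le C\|(I+D_\alpha)f\|_p$. I would start from $f$ with $f,D_\alpha f\in L^p$, so that $h:=(I+D_\alpha)f\in L^p$. Since for $0<\alpha<\alpha_0$ the operator $(I+D_\alpha)J_\alpha$ is invertible on $L^p$, I would set
\[
g=\left[(I+D_\alpha)J_\alpha\right]^{-1}h\in L^p,\qquad f_0:=J_\alpha g\in L^{\alpha,p}.
\]
By construction $(I+D_\alpha)f_0=(I+D_\alpha)J_\alpha g=h=(I+D_\alpha)f$, hence $(I+D_\alpha)(f-f_0)=0$. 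Thus the reverse implication reduces entirely to the injectivity of $I+D_\alpha$ on the class $\{u\in L^p:D_\alpha u\in L^p\}$: once $f=f_0$ is known, $f\in L^{\alpha,p}$ follows, and then the norm estimate is immediate from Remark \ref{linf}, since $\|f\|_{\alpha,p}=\|J_\alpha g\|_{\alpha,p}\le 2\|g\|_p\le 2\,\bigl\|[(I+D_\alpha)J_\alpha]^{-1}\bigr\|\,\|(I+D_\alpha)f\|_p$.

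For the injectivity I would exploit that the kernel $n_\alpha$ is non-negative and symmetric, so $D_\alpha$ is formally self-adjoint and accretive: for sufficiently regular $u$,
\[
\langle D_\alpha u,u\rangle=\tfrac12\iint n_\alpha(x,y)(u(x)-u(y))^2\,dm(y)\,dm(x)\ge 0,
\]
which already gives $\langle (I+D_\alpha)u,u\rangle\ge\|u\|_2^2$ and hence injectivity in $L^2$. To reach a general $p$ I would argue by duality. Invertibility of $(I+D_\alpha)J_\alpha$ on $L^{p'}$ shows that $I+D_\alpha$ maps $L^{\alpha,p'}=J_\alpha(L^{p'})$ onto $L^{p'}$: given $w\in L^{p'}$, the function $h_w=J_\alpha[(I+D_\alpha)J_\alpha]^{-1}w$ lies in $L^{\alpha,p'}$ and satisfies $(I+D_\alpha)h_w=w$. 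Setting $v=f-f_0$, the self-adjoint pairing $\langle D_\alpha v,h\rangle=\langle v,D_\alpha h\rangle$ for $h\in L^{\alpha,p'}$, combined with $(I+D_\alpha)v=0$, yields $\langle v,(I+D_\alpha)h\rangle=0$ for all such $h$; since $(I+D_\alpha)h$ exhausts $L^{p'}$, we conclude $\langle v,w\rangle=0$ for every $w\in L^{p'}$, whence $v=0$.

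The main obstacle is precisely the justification of $\langle D_\alpha v,h\rangle=\langle v,D_\alpha h\rangle$ when $v$ only belongs to the derivative class and $h\in L^{\alpha,p'}$: symmetrizing the double integral against $n_\alpha(x,y)\sim d(x,y)^{-(N+\alpha)}$ requires enough regularity near the diagonal to guarantee absolute convergence and to apply Fubini. Here I would use the Besov regularity of $h$ coming from the embedding $L^{\alpha,p'}\embeds B^{\alpha-\epsilon}_{p',q}$, together with the $L^p$ control $D_\alpha v=-v$, to dominate the near-diagonal and off-diagonal contributions separately, splitting $X\times X$ into near- and far-from-the-diagonal regions exactly as in the estimates of Section 3. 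An alternative route, avoiding the pairing, is to establish the accretivity inequality on a dense family of $C_c^\gamma$ functions and pass to the limit; I nevertheless expect the duality argument to be cleaner, with the convergence of the symmetrized bilinear form being the only genuinely delicate point.
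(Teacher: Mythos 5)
Your reduction is, at its core, the paper's own proof: the paper likewise sets $g=\left[(I+D_\alpha)J_\alpha\right]^{-1}(I+D_\alpha)f$, concludes $f=J_\alpha g\in L^{\alpha,p}$, and obtains the norm equivalence exactly as you do, from $\|f\|_{\alpha,p}=\|f\|_p+\|J_\alpha^{-1}f\|_p$ and the boundedness of $\left[(I+D_\alpha)J_\alpha\right]^{-1}$. Where you diverge is in how the identification $J_\alpha g=f$ is closed. The paper does it by pure operator algebra: it factors $\left[(I+D_\alpha)J_\alpha\right]^{-1}=J_\alpha^{-1}(I+D_\alpha)^{-1}$, where $(I+D_\alpha)^{-1}$ inverts the bijection $I+D_\alpha\colon L^{\alpha,p}\imply L^p$ coming from Section 5, and then cancels $(I+D_\alpha)^{-1}(I+D_\alpha)f=f$. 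That cancellation is legitimate only if $I+D_\alpha$ is injective on the larger class $\{u\in L^p: D_\alpha u\in L^p\}$, which is precisely the point you isolate; you have correctly identified the one step the paper leaves implicit, and everything else in your proposal reproduces what the paper actually writes.

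The catch is that your proposed repair of that step is not yet a proof, and the specific route you sketch hits a borderline failure. The regularity available for $h\in L^{\alpha,p'}$ is of order exactly $\alpha$ (the paper's modulus estimate gives $E_{p'}h(t)\leq Ct^\alpha\|w\|_{p'}$, and the Besov embedding only order $\alpha-\epsilon$), while the kernel singularity is $d(x,y)^{-(N+\alpha)}$; since the hypotheses $v\in L^p$, $D_\alpha v=-v\in L^p$ yield no modulus of continuity for $v$ at all, the near-diagonal part of the symmetrized form $\iint n_\alpha(x,y)|v(x)-v(y)||h(x)-h(y)|\,dm\,dm$ is at best logarithmically divergent, and the plan to ``dominate the near-diagonal contribution'' using $h$ alone does not close. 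The workable version is the alternative you mention in passing: fix a meaning for $D_\alpha v$ on the hypothesis class (say, $L^p$ or distributional limits of the truncations $D^\epsilon_\alpha v(x)=\int_{d(x,y)>\epsilon}n_\alpha(x,y)(v(x)-v(y))dm(y)$ --- some such convention is needed anyway, since the paper never defines $D_\alpha$ off smooth functions), pair against $h\in C_c^\gamma$ with $\alpha<\gamma\leq 1$, where symmetrization is exact at each truncation level and the limiting form converges absolutely because $d(x,y)^{-(N+\alpha)}d(x,y)^\gamma$ is locally integrable and the support of $h$ is compact, and then upgrade $\langle v,(I+D_\alpha)h\rangle=0$ to $v=0$ by a density argument for $(I+D_\alpha)$-images in $L^{p'}$ --- which itself requires an argument, e.g.\ applying the invertible operator $(I+D_\alpha)J_\alpha$ to a dense subclass of $L^{p'}$ and re-justifying the pairing for $h=J_\alpha w$, $w\in C_c^\gamma$, using the off-diagonal decay $k_\alpha(x,y)\leq Cd(x,y)^{-(N+\alpha)}$. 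So: same skeleton as the paper, a genuine and well-spotted refinement in intent, but the key symmetrization lemma must be rerouted through truncations and smoother test functions before your duality argument is complete.
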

\begin{proof} We have already seen in this case $J_\alpha:L^p\imply L^{\alpha,p}$ is bijective, and therefore $I+D_\alpha$ is also bijective. If $f,D_\alpha f\in L^p$, define 
\begin{align*}g=\left[(I+D_\alpha)J_\alpha\right]^{-1}(I+D_\alpha)f,\end{align*}
we get $g\in L^p$ and
\begin{align*}J_\alpha g= J_\alpha\left[(I+D_\alpha)J_\alpha\right]^{-1}(I+D_\alpha)f=J_\alpha J_\alpha^{-1}(I+D_\alpha)^{-1}(I+D_\alpha)f=f.\end{align*}
We also get
\begin{align*}\|f\|_{\alpha,p} &=\|f\|_p+\|J_\alpha^{-1}f\|_p\\
&\leq C\|J_\alpha^{-1}f\|_p=C\|\left[(I+D_\alpha)J_\alpha\right]^{-1}(I+D_\alpha) f\|_p\\
&\leq C\|(I+D_\alpha)f\|_p.\end{align*}\end{proof}

We can also characterize functions in $L^{\alpha,p}$ in terms of the Riesz potential $I_\alpha$ as follows. In \cite{GSV} and \cite{Ga}, it is proven there exists $0<\tilde{\alpha}_0$ such that, for $\alpha<\tilde{\alpha}_0$, the operator $D_\alpha I_\alpha$ is inversible in $L^p$, $1<p<\infty$. Thus we obtain

\begin{coro}For $\alpha>0$ satisfying $\alpha<\alpha_0\yy \tilde{\alpha}_0$ and $1<p<\infty$, we get
\begin{align*}f\in L^{\alpha,p}\text{ if and only if } f\in L^p \text{ and there exists $\gamma\in L^p$ with }f=I_\alpha \gamma.\end{align*}\end{coro}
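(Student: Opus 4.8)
The plan is to combine the two characterizations we already have available at this point in the paper. The final corollary asserts that, for $\alpha$ below both $\alpha_0$ and $\tilde{\alpha}_0$, membership $f\in L^{\alpha,p}$ is equivalent to $f\in L^p$ together with the existence of $\gamma\in L^p$ such that $f=I_\alpha\gamma$. The key observation is that the preceding theorem already gives $f\in L^{\alpha,p}$ iff $f,D_\alpha f\in L^p$ (valid for $\alpha<\alpha_0$), and the cited inversibility of $D_\alpha I_\alpha$ on $L^p$ (valid for $\alpha<\tilde{\alpha}_0$, from \cite{GSV} and \cite{Ga}) lets me move freely between $D_\alpha$ and $I_\alpha$. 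So I would prove the corollary by showing the chain of equivalences: $f\in L^{\alpha,p}$ $\iff$ $f,D_\alpha f\in L^p$ $\iff$ $f\in L^p$ and $f=I_\alpha\gamma$ for some $\gamma\in L^p$, using the theorem for the first equivalence and the inversibility of $D_\alpha I_\alpha$ for the second.

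First I would handle the forward implication. Suppose $f\in L^{\alpha,p}$. By the previous theorem, $f\in L^p$ and $D_\alpha f\in L^p$. Since $\alpha<\tilde{\alpha}_0$, the operator $D_\alpha I_\alpha$ is inversible in $L^p$; I then set $\gamma=(D_\alpha I_\alpha)^{-1}D_\alpha f$, which lies in $L^p$ because $D_\alpha f\in L^p$ and $(D_\alpha I_\alpha)^{-1}$ is bounded. Applying $D_\alpha I_\alpha$ to $\gamma$ gives $D_\alpha I_\alpha\gamma=D_\alpha f$, so $D_\alpha(I_\alpha\gamma-f)=0$. The remaining point is to conclude $I_\alpha\gamma=f$ from this, i.e.\ that $D_\alpha$ is injective on the relevant class; this should follow from the inversibility structure established in section 5 (that $I+D_\alpha$ is bijective on $L^p$ for $\alpha<\alpha_0$, which forces $D_\alpha$ to have trivial kernel on $L^p$), together with $f,I_\alpha\gamma\in L^p$.

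For the converse, suppose $f\in L^p$ and $f=I_\alpha\gamma$ with $\gamma\in L^p$. Then $D_\alpha f=D_\alpha I_\alpha\gamma$, and since $D_\alpha I_\alpha:L^p\imply L^p$ is bounded and $\gamma\in L^p$, we get $D_\alpha f\in L^p$. Hence $f,D_\alpha f\in L^p$, and by the previous theorem $f\in L^{\alpha,p}$. This direction is the cleaner of the two, as it only requires boundedness of $D_\alpha I_\alpha$, not its inverse.

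The main obstacle I anticipate is the injectivity step in the forward direction: concluding $I_\alpha\gamma=f$ from $D_\alpha(I_\alpha\gamma-f)=0$. One must argue carefully that $D_\alpha$ annihilates only the appropriate trivial object on $L^p$, which is not literally stated as a standalone lemma but is implicit in the bijectivity of $I+D_\alpha$ on $L^p$ for $\alpha<\alpha_0$ proved in section 5. A clean way to sidestep delicate kernel arguments is to run the whole equivalence through the $D_\alpha$-characterization as the hub: rather than directly identifying $f$ with $I_\alpha\gamma$, I would note that both equivalences pass through the single intermediate condition $f,D_\alpha f\in L^p$, so the corollary is just the composition of the theorem with the statement that, for $f\in L^p$, the condition $D_\alpha f\in L^p$ is equivalent to $f$ being an $I_\alpha$-potential of an $L^p$ function. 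Establishing that last equivalence rigorously, using inversibility of $D_\alpha I_\alpha$ in both directions, is where the real content lies.
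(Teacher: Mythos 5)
Your converse direction is correct and is exactly what the paper intends: from $f=I_\alpha\gamma$ with $\gamma\in L^p$ one gets $D_\alpha f=D_\alpha I_\alpha\gamma\in L^p$ by the $L^p$-boundedness of $D_\alpha I_\alpha$ (valid for $\alpha<\tilde{\alpha}_0$), and then the preceding characterization theorem (valid for $\alpha<\alpha_0$) gives $f\in L^{\alpha,p}$. The paper itself offers no more than this skeleton --- the corollary is stated with ``Thus we obtain'' after citing the inversibility of $D_\alpha I_\alpha$ from \cite{GSV} and \cite{Ga} --- so your overall architecture matches the paper's, and you correctly identified that the forward direction is where the hidden content lies.

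However, your attempted repair of the forward direction contains a genuine gap, in fact two. First, the claim that bijectivity of $I+D_\alpha$ on $L^p$ ``forces $D_\alpha$ to have trivial kernel'' is false as an abstract operator statement: take $D$ to be a nonzero bounded projection $P$; then $I+P$ is invertible (with inverse $I-\tfrac{1}{2}P$) while $\ker P\neq\{0\}$. Indeed, if $D_\alpha h=0$ then $(I+D_\alpha)h=h$, which is perfectly consistent with bijectivity of $I+D_\alpha$ and yields no contradiction. Second, even granting injectivity of $D_\alpha$ on $L^p$, it would not apply to your difference $h=I_\alpha\gamma-f$: the Riesz potential $I_\alpha$ maps $L^p$ into $L^{p^*}$ (for $p<N/\alpha$), not into $L^p$, so $h$ is not known to lie in the space where the injectivity would be invoked. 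The correct way to close the argument is to establish the identity $f=I_\alpha\left(D_\alpha I_\alpha\right)^{-1}D_\alpha f$ directly rather than via a kernel argument: verify it first on a dense subclass of $L^{\alpha,p}$ (for instance $f=J_\alpha g$ with $g\in C^\gamma_c$, using the $Q_t$-representations of $I_\alpha$ and $D_\alpha$ with weights $t^\alpha$ and $t^{-\alpha}$ and the Calder\'on reproducing formula, as in section 5), or cite the precise form of the result in \cite{GSV}, which yields exactly this representation; then extend by density, using $\|D_\alpha f\|_p\leq C\|f\|_{\alpha,p}$, the boundedness of $(D_\alpha I_\alpha)^{-1}$ on $L^p$, and the continuity of $I_\alpha:L^p\imply L^{p^*}$ so that both sides of the identity pass to the limit in a common space. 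You flagged the right weak point, but the justification you proposed for it does not work.
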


As another corolary, the following embeddings hold, which follow from the fact that $D_\alpha f\in L^p$ for $f$ smooth enough.

\begin{itemize}
	\item If $0<\alpha<\alpha_0$ and $\epsilon>0$ satisfies $0<\alpha+\epsilon<1$, for $1<p<\infty$ we have
		\begin{align*}M^{\alpha+\epsilon,p}\embeds L^{\alpha,p}\embeds M^{\alpha,p}.\end{align*}
	\item If $0<\alpha<\alpha_0$ and $0<\epsilon<\alpha$ satisfies $0<\alpha+\epsilon<1$, for $1<p<\infty$ we have
		\begin{align*}B^{\alpha+\epsilon}_{p,p}\embeds L^{\alpha,p}\embeds B^{\alpha-\epsilon}_{p,p}.\end{align*}
	\item If $0<\alpha<\alpha_0$ and $\beta>0$ satisfies $\alpha<\beta<1$, for $1<p<\infty$ we have
		\begin{align*}L^{\beta,p}\embeds L^{\alpha,p}.\end{align*}
\end{itemize}

As a final result, we show that in $\RR^n$, for $\alpha<\alpha_0$, the space $L^{\alpha,p}$ coincides with the classical $\mathcal{L}^{\alpha,p}$.

Let $(S_t)_{t>0}$ be an approximation of the identity as constructed in the introduction, from a function $h$. Let $H(x)=h(|x|)$ and $H_t(x)=t^{-n}H(x/t)$. Then
\begin{itemize}
	\item $T_tf(x)=\frac{1}{t^n}\int h\left(\frac{|x-y|}{t}\right)f(y)dy=\int H_t(x-y)f(y)dy=H_t*f(x)$;
	\item $T_t1\equiv\int H_t=\int H=c_H$ for every $t>0$ and $x\in \RR^n$, then $\varphi\equiv\frac{1}{c_H}$ and $\psi\equiv 1$.
	\item $S_t f=\frac{1}{c_H^2}H_t*H_t*f=\int \left(\frac{1}{c_H^2}H_t*H_t\right)(x-y)f(y)dy$.
	\item $s(x,y,t)=\left(\frac{1}{c_H^2}H_t*H_t\right)(x-y)$.
\end{itemize}

We will see that
\begin{align*}s(x,y,t)=\varphi_t(x-y)\end{align*}
where $\varphi$ is radial. Observe
\begin{align*}H_t*H_t(x) &=\frac{1}{t^{2n}}\int H\left(\frac{x-y}{t}\right)H\left(\frac{y}{t}\right)dy=\frac{1}{t^n}\int H\left(\frac{x}{t}-z\right)H(z)dz\\
&=\frac{1}{t^n}(H*H)(x/t)=\left(H*H\right)_t(x).\end{align*}
Besides, if $\rho$ is a rotation, as $H$ is radial, we get
\begin{align*}H*H(\rho x) &=\int H(\rho x-y)H(y)dy=\int H(\rho(x-\rho^{-1}y))H(\rho\rho^{-1}y)dy\\
&=\int H(x-\rho^{-1}y)H(\rho^{-1}y)dy=H*H(x).\end{align*}

This way, if $\phi=\frac{1}{c_H^2}H*H$, we will have
\begin{align*}\frac{1}{c_H^2}H_t*H_t=\phi_t.\end{align*}

With this expression for $s$, we obtain
\begin{align*}n_\alpha(x,y) &=\int_0^\infty \alpha t^{-\alpha}s(x,y,t)\frac{dt}{t}=\int_0^\infty \alpha t^{-\alpha}\frac{1}{t^n}\phi\left(\frac{x-y}{t}\right)\frac{dt}{t}\\
&=\frac{1}{|x-y|^{n+\alpha}}\int_0^\infty \alpha u^{n+\alpha}\phi(ue_1)\frac{du}{u}=\frac{c_{n,\alpha,\phi}}{|x-y|^{n+\alpha}}\end{align*}
and the last integral converges because $\phi$ is bounded and compactly supported. 

Now, recall that for $0<\alpha<2$,
\begin{align*}\mathscr{D}_\alpha f(x)=\text{p.v. } c_{\alpha,n}\int\frac{f(y)-f(x)}{|x-y|^{n+\alpha}}dy\end{align*}
and that for those values of $\alpha$,
\begin{align*}f\in \mathcal{L}^{\alpha,p}\text{ if and only if } f,\mathscr{D}_\alpha f\in L^p.\end{align*}

From the previous result, we get
\begin{align*}D_\alpha f=C_{n,\alpha,h} \mathscr{D}_\alpha f,\end{align*}
and thus
\begin{align*}f\in \mathcal{L}^{\alpha,p}\text{ if and only if } f,D_\alpha f\in L^p.\end{align*}

In conclusion, for $0<\alpha<\alpha_0$, by the characterization theorem the spaces $L^{\alpha,p}(\RR^n)$ are independent from the choice of $h$ in the aproximation of the identity $(S_t)$, and they coincide with the classical space
\begin{align*}L^{\alpha,p}=\mathcal{L}^{\alpha,p}.\end{align*}

\section*{Acknowledgements}

The author is infinitely indebted to his advisors Eleonor `Pola' Harboure and Hugo Aimar for their guidance and support throughout the development of his doctoral thesis and its resulting papers.

\textit{E-mail address:} \texttt{mmarcos@santafe-conicet.gov.ar}

\end{document}